\documentclass[10pt]{article}
\usepackage[margin=1in]{geometry}
\usepackage{amsthm, amsmath,amsfonts,amssymb,euscript,hyperref,graphics,color,slashed,mathrsfs}
\usepackage{graphicx}
\usepackage{comment}
\usepackage{import}
\usepackage{tikz}
\usepackage{latexsym}
\usepackage{mathtools}
\usepackage{appendix}

\newtheorem{theorem}{Theorem}[section]
\newtheorem{lemma}[theorem]{Lemma}

\newtheorem{definition}[theorem]{Definition}
\newtheorem{remark}[theorem]{Remark}

\setlength{\textwidth}{16cm} \setlength{\oddsidemargin}{0cm}
\setlength{\evensidemargin}{0cm}

\numberwithin{equation}{section}

\begin{document}
\title {On the irrationality of certain $2$-adic zeta values}

\author{Li Lai}
\date{}

\maketitle

\begin{abstract}
	Let $\zeta_2(\cdot)$ be the Kubota–Leopoldt $2$-adic zeta function. We prove that, for every nonnegative integer $s$, there exists an odd integer $j$ in the interval $[s+3,3s+5]$ such that $\zeta_2(j)$ is irrational. In particular, at least one of $\zeta_2(7),\zeta_2(9),\zeta_2(11),\zeta_2(13)$ is irrational.
	
	Our approach is inspired by the recent work of Sprang. We construct explicit rational functions. The Volkenborn integrals of these rational functions' (higher-order) derivatives produce good linear combinations of $1$ and $2$-adic Hurwitz zeta values. The most difficult step is proving that certain Volkenborn integrals are nonzero, which is resolved by carefully manipulating the binomial coefficients. 
\end{abstract}


\section{Introduction}

It is well known that the values of the Riemann zeta function $\zeta(\cdot)$ at positive even integers are nonzero rational multiples of powers of $\pi$, and they are all transcendental. The arithmetic of values of $\zeta(\cdot)$ at positive odd integers ($\geqslant 3$) is more mysterious. Ap\'ery \cite{Ape79} achieved the first result toward this question by proving that $\zeta(3)$ is irrational. Rivoal \cite{Riv00}, Ball and Rivoal \cite{BR01} obtained a lower bound for the dimension of the $\mathbb{Q}$-space spanned by $1,\zeta(3),\zeta(5),\ldots,\zeta(s)$. As a corollary of the Ball-Rivoal theorem, there are infinitely many $\zeta(2k+1)$'s that are linearly independent over rational numbers. In recent years, some further developments have been made in \cite{FSZ19, LY20, Fis21}. Especially, Fischler \cite{Fis21} significantly improved the lower bound on the dimension. 

\bigskip

\newtheorem*{theoA}{Theorem A}
\begin{theoA}[Fischler \cite{Fis21}]
	For any sufficiently large odd integer $s$ we have
	\[\dim_{\mathbb{Q}} \operatorname{Span}_{\mathbb{Q}}\left(1,\zeta(3),\zeta(5),\ldots,\zeta(s)\right) \geqslant 0.21 \sqrt{\frac{s}{\log s}}.\]
\end{theoA}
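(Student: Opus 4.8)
The strategy, following \cite{Fis21}, refines the Ball--Rivoal method (\cite{BR01}) in two essential ways: one replaces the single explicit hypergeometric series by a Siegel's lemma argument ranging over an entire vector space of admissible rational functions, and one feeds the resulting forms into a strengthened Nesterenko-type linear independence criterion (of the kind used in \cite{FSZ19, Fis21}) which is rewarded by having \emph{many} small linear forms at each scale rather than just one.

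First I would fix auxiliary integers $r$ and $m$, to be optimized later as functions of $s$, and introduce the $\mathbb{Q}$-vector space $V_n$ of rational functions
\[
R(t) = P(t)\,\frac{\prod_{j=1}^{rn}(t-j)\,\prod_{j=1}^{rn}(t+n+j)}{\prod_{j=0}^{n}(t+j)^{s+1}},
\]
where $P$ runs over polynomials of degree $\leqslant mn$ constrained so that $R(-n-t)=-R(t)$; this symmetry forces $\sum_{k\geqslant 1}R(k)$, after a partial-fraction expansion, to be a $\mathbb{Q}$-linear combination of $1$ and the \emph{odd} values $\zeta(3),\zeta(5),\ldots,\zeta(s)$ alone, and $\dim V_n$ grows linearly in $n$. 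For each such $R$, a standard analysis of the binomial coefficients in the partial-fraction expansion, together with a Rivoal-type factor $\Phi_n$ stripping away the small primes, shows that the coefficients of the linear form $\ell_n(R)=\sum_{k\geqslant 1}R(k)$ admit a common denominator bounded by $e^{(\kappa s+o(1))n}$ with $\kappa<1$ the Rivoal constant, while the Laplace/saddle-point method bounds $|\ell_n(R)|$ by $c^{\,n}$ for an explicit $c<1$ depending on the leading behaviour of $R$.

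The key point then is that, instead of optimizing a single $R$, I would impose on $P$ linear conditions designed to make the forms smaller (for instance cancelling the dominant saddle-point term) and apply Siegel's lemma inside $V_n$. This produces a number $T_n$ of linearly independent forms $\ell_n(R_1),\ldots,\ell_n(R_{T_n})$, with $T_n$ comparable to $\dim V_n$ (so of order $n$, with an implied constant growing with $r$), all sharing the denominator bound above, all bounded in absolute value by $c_1^{\,n}$ with $c_1$ strictly smaller than the classical $c$, at the modest cost of enlarging the coefficients by a factor subexponential in $n$. Feeding the family $\{\ell_n(R_i)\}_{n,i}$ into the strengthened criterion yields a lower bound for $\dim_{\mathbb Q}\operatorname{Span}_{\mathbb Q}(1,\zeta(3),\ldots,\zeta(s))$ in which $\log T_n$ enters the numerator, so that the extra supply of forms compensates for a large choice of $r$; choosing $r$ of order $\sqrt{s/\log s}$ and $m$ accordingly and letting $s\to\infty$ through odd values, the optimization lands on $0.21\sqrt{s/\log s}$ (the oddness of $s$ and "sufficiently large" absorbing the symmetry constraint and the error terms).

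The main obstacle is twofold. On the arithmetic side one must control the common denominator of \emph{all} coefficients uniformly over the Siegel's lemma family, i.e.\ bound the $p$-adic valuations of the binomial coefficients occurring in every $\ell_n(R_i)$ simultaneously; this is exactly what the $\Phi_n$ factor and a careful valuation count are for. More seriously, the strengthened criterion carries a non-degeneracy hypothesis: a certain determinant built from the $\ell_n(R_i)$ must be shown to be nonzero. Establishing this amounts to proving that an explicit hypergeometric (or, in the $2$-adic analogue treated in the present paper, Volkenborn-integral) determinant does not vanish --- precisely the type of non-vanishing statement the abstract singles out as hardest --- and is handled by a delicate manipulation of binomial coefficients.
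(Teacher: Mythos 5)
Theorem A is not proved in this paper at all: it is quoted from Fischler \cite{Fis21} purely as background motivation, and the present paper's actual content (Volkenborn integrals, $2$-adic Hurwitz zeta values, the non-vanishing of $S_n$ at $n=2^m-1$) is logically independent of it. So there is no internal proof to compare your argument against, and your proposal has to be judged on its own as a reconstruction of \cite{Fis21}. At that level it is only a plausible outline, not a proof. Every quantitatively decisive step is asserted rather than established: the uniform denominator bound $e^{(\kappa s+o(1))n}$ over the whole Siegel-lemma family, the claim that one gets $T_n$ linearly independent forms of size $c_1^n$ with $c_1$ strictly smaller than the Ball--Rivoal value at only subexponential cost in the coefficients, the precise form of the ``strengthened criterion'' in which $\log T_n$ enters, and above all the optimization in $r$ and $m$ that is supposed to output the constant $0.21\sqrt{s/\log s}$. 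None of these is verified or even given explicit shape, and the final constant is exactly the kind of thing that cannot be waved at: the passage from $\log s$ (Ball--Rivoal \cite{BR01}) to a power of $s$ is the whole theorem, and it hinges on the specific interplay between the dimension of the space of admissible $P$, the loss in Siegel's lemma, and the hypotheses of the criterion, which your sketch leaves entirely open.

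Two further points deserve flagging. First, your last paragraph imports the non-vanishing difficulty of \emph{this} paper (a Volkenborn-integral determinant, binomial-coefficient manipulations at $n=2^m-1$) into Fischler's Archimedean setting; that conflates two different problems, and it is not accurate as a description of what the obstruction in \cite{Fis21} is. Second, the symmetry condition $R(-n-t)=-R(t)$ killing the even zeta values, the factor $\Phi_n$, and the saddle-point bounds are standard Ball--Rivoal ingredients and are fine as stated, but they are not where the improvement comes from; a referee reading your text would still have no way to decide whether the Siegel-lemma mechanism you describe actually yields $c\sqrt{s/\log s}$ rather than, say, another $\log s$ or a power of $\log s$ as in \cite{FSZ19}. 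As it stands the proposal is a research plan consistent with the title and spirit of \cite{Fis21}, not a proof of Theorem A.
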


\bigskip

Although $\zeta(3)$ is still the only particular odd zeta value known to be irrational, some partial irrationality results on $\zeta(s)$ with $s$ odd and small are known. Zudilin \cite{Zud01} proved that at least one of the four numbers $\zeta(5),\zeta(7),\zeta(9),\zeta(11)$ is irrational. Moreover, Zudilin \cite{Zud02} also proved that for every odd integer $s \geqslant 1$, the set $\{\zeta(j) \mid j \in \mathbb{Z} \cap [s+2,8s-1], j \text{~odd}\}$ contains at least one irrational number.

Let us turn our attention to the $p$-adic world. Throughout this paper, the letters $p$ and $q$ always denote prime numbers. The notations $\mathbb{Z}_p$, $\mathbb{Q}_p$ and $\mathbb{C}_p$ have their standard meanings. We use $v_p(x)$ to denote the $p$-adic order of $x$. The $p$-adic norm is defined by $|x|_p = p^{-v_p(x)}$. The notation $|y|$ denotes the Archimedean norm of $y$. The notation $\log$ always denotes the natural logarithm in the Archimedean world.

We are interested in the arithmetic of special values of the $p$-adic zeta function. The notation $\zeta_p(\cdot)$ has several slightly different meanings in the literature. We take the following one: for an integer $s \geqslant 2$, we define $\zeta_p(s) := L_p(s,\omega^{1-s})$, where $\omega$ is the Techm\"uller character and $L_p(s,\chi)$ is the Kubota–Leopoldt $p$-adic $L$-function associated to the character $\chi$. In this notation we have (for an integer $s \geqslant 2$):
\[\zeta_p(s) = \lim_{k \rightarrow s ~p\text{-adically}\atop k \in \mathbb{Z}_{<0},~~ k\equiv s \pmod{p-1}} \zeta(k) \quad \in \mathbb{Q}_p,\]
and $\zeta_p(s)$ vanishes when $s$ is a positive even integer. We will recall the definitions of $p$-adic Hurwitz zeta functions and $p$-adic $L$-functions in Section \ref{Sect_Prel}, through Volkenborn integrals.

Calegari \cite{Cal05} proved that $\zeta_p(3)$ is irrational for $p=2,3$. Very recently, Calegari, Dimitrov and Tang \cite{CDT20} proved that $\zeta_p(5)$ is irrational for $p=2$. They made use of the arithmetic holonomicity criterion. On the other hand, Sprang \cite{Spr20} proved the $p$-adic analogue of the Ball-Rivoal theorem.

\bigskip

\newtheorem*{theoB}{Theorem B}
\begin{theoB}[Sprang \cite{Spr20}]\label{thmB}
	Let $K$ be a number field (in $\mathbb{C}_p$). For $\varepsilon > 0$ and a sufficiently large positive odd integer $s$, we have
	\[\dim_{K} \operatorname{Span}_{K}\left(1,\zeta_p(3),\zeta_p(5),\ldots,\zeta_p(s)\right) \geqslant \frac{1-\varepsilon}{2[K:\mathbb{Q}](1+\log 2)} \log s.\]
\end{theoB}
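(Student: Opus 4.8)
\medskip
\noindent\textbf{Proof proposal.} Theorem~B is the $p$-adic analogue of the Ball--Rivoal theorem \cite{BR01}, and I would prove it by transplanting the Ball--Rivoal method into the Volkenborn-integral setting (this is, in outline, Sprang's argument \cite{Spr20}). The plan has four stages: (1) for each large $n$, construct an explicit linear form $L_n = a_{0,n} + \sum_{j} a_{j,n}\,\zeta_p(j)$ in $1$ and the odd $p$-adic zeta values, with coefficients in $\mathbb{Q}$ (or in the ring of integers of $K$ after scaling); (2) show that $|L_n|_p$ is extremely small; (3) control the archimedean sizes of the $a_{j,n}$ and the ratio of consecutive sizes; (4) invoke a $p$-adic version of Nesterenko's linear independence criterion to convert the gap between the $p$-adic smallness in (2) and the archimedean growth in (3) into a lower bound on $\dim_K$.

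For stage~(1), I would adapt the classical Ball--Rivoal rational function --- schematically
\[
R_n(X) \;=\; c_n\;\frac{\prod_{i=1}^{rn}(X-i)\,\prod_{i=1}^{rn}(X+n+i)}{\prod_{i=0}^{n}(X+i)^{\,s+1}}
\]
with a free integer parameter $r$ and a normalising constant $c_n$ (a product of factorials and a power of $p$) --- and consider the Volkenborn integrals of $R_n$ and of its higher-order derivatives $\int_{\mathbb{Z}_p} R_n^{(k)}(x)\,dx$; the precise choice of $R_n$ and $c_n$, in particular arranging that the poles of $R_n$ do not obstruct Volkenborn integration and that the integral is $p$-adically small, is where the technical work of the construction lies. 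Expanding in partial fractions and using the formulas of Section~\ref{Sect_Prel} that evaluate $\int_{\mathbb{Z}_p}(x+i)^{-m}\,dx$ through $p$-adic Hurwitz zeta values --- then twisting by $\omega$ and summing over residues to assemble $\zeta_p(m) = L_p(m,\omega^{1-m})$ --- produces a $\mathbb{Q}$-linear combination of $1$ and $\zeta_p(m)$, $2\leqslant m\leqslant s+1$. Here the $p$-adic world is kinder than the real one: $\zeta_p(m)$ vanishes for every even $m$, so no Rivoal-type symmetrisation is needed and the combination automatically involves only $\zeta_p(3),\zeta_p(5),\dots,\zeta_p(s)$.

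Stages~(2) and (3) are the quantitative core. For (2) one reads off $v_p(L_n)$ from the $p$-integrality of the partial-fraction coefficients together with the factorial/prime-power normalisation, using $v_p(n!) = \frac{n - \sigma_p(n)}{p-1} = \frac{n}{p-1} + o(n)$ (with $\sigma_p$ the base-$p$ digit sum), and obtains $v_p(L_n) \geqslant (\kappa + o(1))\,n$ for an explicit $\kappa = \kappa(s,r) > 0$. For (3), clearing the denominators of the $a_{j,n}$ costs a bounded power of $\operatorname{lcm}(1,\dots,n)$, so Stirling's formula for the binomials in $c_n$ together with the prime number theorem ($\log\operatorname{lcm}(1,\dots,n)\sim n$) gives $\tfrac1n\log\max_j|a_{j,n}| \to \lambda$ and a bounded $\log(\max_j|a_{j,n+1}|/\max_j|a_{j,n}|)$, with explicit $\lambda = \lambda(s,r)$. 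The $p$-adic criterion then yields $\dim_K \geqslant 1 + \frac{\kappa\log p}{[K:\mathbb{Q}]\,\lambda} + o(1)$; optimising the parameter $r$ exactly as in Ball--Rivoal and letting $s\to\infty$ turns this into the asserted $\frac{1-\varepsilon}{2[K:\mathbb{Q}](1+\log 2)}\log s$, the factor $2$ being the price of the $p$-adic estimate (the valuation one can extract is roughly half the archimedean decay the real construction enjoys).

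The main difficulty --- as the abstract of the present paper also stresses --- lies not in any of the above but in proving that the linear forms are \emph{not identically zero}. Over $\mathbb{R}$, positivity of the defining series hands one $S_n\neq 0$ for free; $p$-adically there is no such argument, and instead one must pin down the exact $p$-adic valuation of a suitably chosen coefficient of $L_n$ (equivalently, of the distinguished Volkenborn integral), showing that its leading $p$-adic term survives all cancellations --- a careful, and somewhat intricate, manipulation of the binomial coefficients appearing in the partial-fraction expansion of $R_n$. A secondary technical nuisance is making the poles of $R_n$ inside $\mathbb{Z}_p$ compatible with Volkenborn integration, via higher derivatives or a relocation of the poles, without spoiling either estimate (2) or (3). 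Once the non-vanishing and the two estimates are established, the $p$-adic linear independence criterion applies verbatim and Theorem~B follows.
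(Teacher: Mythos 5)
Theorem~B is not proved in this paper at all: it is quoted verbatim from Sprang \cite{Spr20} as background, so there is no internal proof to measure your argument against. What can be said is that your outline is a faithful reconstruction of the strategy that Sprang uses and that the present paper adapts for its own theorems: Volkenborn integrals of Ball--Rivoal-type rational functions, partial fractions evaluated through the identity $\int_{\mathbb{Z}_p}(t+x)^{-j}\,\mathrm{d}t=j\,\omega(x)^{-j}\zeta_p(j+1,x)$ (Lemma~\ref{Lem_2_6}), which automatically supplies the $m$-dependent twist $\omega^{1-m}$ when one sums over residues; archimedean and $p$-adic estimates via the prime number theorem and $v_p(n!)$; and a $p$-adic linear independence criterion. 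Your observation that the vanishing of $\zeta_p$ at even integers removes the need for the Rivoal symmetrisation, and your identification of the non-vanishing of the linear forms as the real difficulty, are both consonant with how the paper describes Sprang's method and with its own Lemmas~\ref{Lem_2_3}, \ref{Lem_2_4} and \ref{Lem_nonvanishing_A}.

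As a proof, however, the proposal has genuine gaps rather than omitted routine details. The two quantitative stages are asserted, not derived: you never specify the normalisation $c_n$ and the denominator-clearing that make the coefficients algebraic integers of $K$, nor compute the constants $\kappa(s,r)$ and $\lambda(s,r)$ whose optimisation in $r$ is precisely what produces the constant $\tfrac{1}{2(1+\log 2)}$; the remark that ``the factor $2$ is the price of the $p$-adic estimate'' is a guess standing in for that computation. More seriously, the step you yourself single out as decisive --- that the linear forms do not vanish --- is only described (``pin down the exact $p$-adic valuation of a distinguished Volkenborn integral''), with no indication of how the dominant term is chosen or why its valuation survives all cancellations; in Sprang's argument, as in Section~6 of this paper, this requires a specific choice of $n$ (here $n=2^m-1$) and a delicate binomial-coefficient analysis (Lemmas~\ref{Lem_2_4} and \ref{Lem_5_1}), none of which is reproduced. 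Finally, the $p$-adic Nesterenko-type criterion over a number field $K$, whose hypotheses (ratio conditions on consecutive coefficient sizes, behaviour under the degree $[K:\mathbb{Q}]$) are exactly where the factor $[K:\mathbb{Q}]$ enters, is invoked but not stated or verified. So the proposal is a correct map of the route, but the terrain --- the integrality bookkeeping, the optimisation, and above all the non-vanishing --- is left uncrossed.
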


\bigskip

Sprang \cite{Spr20} also obtained similar linear independence results for $p$-adic $L$-values and $p$-adic Hurwitz zeta values. For some other results on the irrationality of $p$-adic Hurwitz zeta values, see Beukers \cite{Beu08} and Bel \cite{Bel10,Bel19}.

Inspired by Sprang's work, we aim to establish the $p$-adic analogue of Zudilin's theorems mentioned above. We only address the case $p=2$ in the current paper. Our main result is as follows.

\medskip

\begin{theorem}\label{thm1}
	For every nonnegative integer $s$, the following set contains at least one irrational number:
	\[\left\{ \zeta_2(j) \mid j \in \mathbb{Z} \cap [s+3,3s+5], j~\text{odd} \right\}.\]
	In particular, at least one of the four $2$-adic numbers $\zeta_2(7),\zeta_2(9),\zeta_2(11),\zeta_2(13)$ is irrational.
\end{theorem}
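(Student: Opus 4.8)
The plan is to adapt Sprang's $p$-adic analogue of the Rivoal/Ball–Rivoal–Zudilin construction, but with the specific structure (a rational function built from Pochhammer-type symbols with a carefully chosen asymmetry) that Zudilin used to get the short interval $[s+2,8s-1]$, here tuned for $p=2$ to get the even shorter interval $[s+3,3s+5]$. Concretely, I would fix parameters $n$ (the ``degree'') and look for an explicit rational function $R_n(t)$, something like a ratio of products $\prod (t+i)$ in numerator and denominator whose partial-fraction decomposition, after taking a suitable number of derivatives and integrating against the Volkenborn measure, yields a $\mathbb{Z}$-linear form
\[
\ell_n \;=\; a_{0,n}\cdot 1 \;+\!\!\sum_{\substack{j\in[s+3,3s+5]\\ j\ \mathrm{odd}}}\!\! a_{j,n}\,\zeta_2(j),
\]
where the coefficients $a_{j,n}$ are rational with controlled $2$-adic denominators and controlled Archimedean size. (The restriction to odd $j$ comes for free, since $\zeta_2(j)=0$ for even $j\geqslant 2$; this is precisely the $p$-adic bonus that replaces the ``only odd zeta values survive'' parity phenomenon in the real case.) The Volkenborn integral of the derivatives of $R_n$ reproduces $2$-adic Hurwitz zeta values at the relevant arguments, as recalled in Section~\ref{Sect_Prel}.

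The irrationality argument is then the usual one: if all the $\zeta_2(j)$ in the set were rational with a common denominator $D$, then $D^{?}\cdot\ell_n$ would be a $2$-adic integer (after clearing the bounded $2$-adic denominators of the $a_{j,n}$) whose $2$-adic valuation I can bound below — because $\ell_n$ itself, as a Volkenborn integral, is $2$-adically small, with $v_2(\ell_n)\to\infty$ at a definite linear rate in $n$. Simultaneously I need $\ell_n\neq 0$ for infinitely many $n$. Combining the lower bound on $v_2$ with the nonvanishing and with the fact that a nonzero rational of bounded height cannot be $2$-adically arbitrarily small forces a contradiction once $n$ is large — provided the $2$-adic smallness of $\ell_n$ outpaces the growth of the coefficient denominators and of $D$. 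So the quantitative heart is the inequality
\[
v_2(\ell_n) \;>\; (\text{denominator cost}) \;+\; (\text{cost of } D),
\]
and the interval length $[s+3,3s+5]$ (hence the number of unknown zeta values, roughly $s+1$) is exactly what one can afford while keeping this inequality valid; optimizing the rational function's parameters is what pins down the constants $3$ and $5$.

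As the abstract already flags, the genuinely hard step is showing that $\ell_n\neq 0$. In the Archimedean setting one typically argues nonvanishing via an integral representation with a positive integrand, or via $p$-adic analytic estimates; here neither is available cleanly, and Volkenborn integrals are notoriously prone to vanishing. So I expect the main obstacle to be an exact evaluation (or at least an exact $2$-adic valuation) of the leading Volkenborn integral $a_{j_0,n}$ for some distinguished $j_0$ — reducing it to a closed-form expression in binomial coefficients and then showing that expression is $2$-adically a unit (or at any rate nonzero) by a delicate Kummer/Legendre-type analysis of the $2$-adic valuations of the binomial coefficients involved. This is the ``carefully manipulating the binomial coefficients'' the abstract promises, and it is where the choice $p=2$ is used essentially: the combinatorics of carries in base $2$ is rigid enough to push through, whereas for general $p$ the same manipulation would stall. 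Once nonvanishing is in hand, the rest (the denominator estimates, the $v_2$ lower bound, and the final contradiction) is routine bookkeeping of the type that appears in Sprang's paper and in the classical Zudilin argument.
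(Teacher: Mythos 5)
The central gap is your parenthetical claim that ``the restriction to odd $j$ comes for free, since $\zeta_2(j)=0$ for even $j$.'' The Volkenborn-integral construction does not produce linear forms in the Kubota--Leopoldt values $\zeta_2(j)$ directly; it produces linear forms in $1$ and the Hurwitz values $\zeta_2(j,\tfrac14)$ (equivalently $\zeta_2(j,\tfrac34)$), and these do \emph{not} vanish at even $j$ --- only the combination $\zeta_2(j)=\tfrac14\bigl(1+(-1)^{1-j}\bigr)\zeta_2\bigl(j,\tfrac14\bigr)$ does. So the even-argument terms in your $\ell_n$ are genuine nonzero unknowns that cannot be discarded; if they remain, Lemma \ref{Lem_2_1} only yields that \emph{some} $\zeta_2(j,\tfrac14)$ with $s+3\leqslant j\leqslant 3s+5$ is irrational, with no control on the parity of $j$, hence no conclusion about any $\zeta_2(j)$ (that is essentially Theorem \ref{thm3}, not Theorem \ref{thm1}). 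The paper eliminates one parity by building a symmetry into the rational function: the factor $(4t+2n)^{\delta}$ gives $A_n(-t-n)=(-1)^{\delta}A_n(t)$, which forces $\rho_{n,i}=0$ for $i\not\equiv\delta\pmod 2$ --- the $2$-adic analogue of the well-poised symmetry in Ball--Rivoal/Zudilin --- and only then does Lemma \ref{Lem_2_7} convert the surviving odd-argument Hurwitz values into $\zeta_2(j)$. This symmetrization is the ingredient missing from your outline, and without it the proof of Theorem \ref{thm1} does not go through.

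Two secondary points. First, the ``routine bookkeeping'' is not quite routine: with the denominator $d_n^{3s+5}$ alone, the smallness condition of Lemma \ref{Lem_2_1} requires $3s+5<(4s+8)\log 2$, which fails already at $s=3$ --- exactly the headline case $\zeta_2(7),\ldots,\zeta_2(13)$; the paper needs the additional arithmetic saving $\Phi_n^{-s-2}$ (Lemma \ref{Lem_4_3} together with Lemma \ref{Phi_est}) to make the balance work for all $s$. Second, for the nonvanishing, establishing that some distinguished coefficient $a_{j_0,n}$ of the linear form is a $2$-adic unit would not by itself give $\ell_n\neq 0$; what is needed (and what the paper does) is an exact $2$-adic valuation of the \emph{integral} $S_n$ itself, obtained for the special indices $n=2^m-1$ by isolating a single dominant term in the Leibniz expansion and showing every other term has strictly larger valuation (Lemma \ref{Lem_5_1} and Lemma \ref{Lem_nonvanishing_A}). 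Your sketch points in this direction, but the choice $n=2^m-1$ and the dominance argument via binary digit sums are the actual substance of that step.
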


\medskip

We also obtain similar irrationality results on $p$-adic zeta values for any odd prime $p$. However, the odd prime case is more involved. It will be presented in a subsequent joint paper. 

Our proof is actually working on the values of the $2$-adic Hurwitz zeta function $\zeta_2(\cdot,1/4)$. Note that for every odd integer $j \geqslant 3$ we have 
\[\zeta_2(j) = \frac{1}{4}\left(\zeta_2\left(j,\frac{1}{4}\right) + \zeta_2\left(j,\frac{3}{4}\right)\right) = \frac{1}{2}\zeta_2\left(j,\frac{1}{4}\right) \]
by the reflection formula $\zeta_p(j,x)=\zeta_p(j,1-x)$ of the $p$-adic Hurwitz zeta function. Although $\zeta_2(\cdot)$ vanishes at positive even integers, the values of $\zeta_2\left(\cdot,1/4\right)$ at positive even integers are nonzero and also interesting. We relax the parity restriction to prove the following two results.

\begin{theorem}\label{thm2}
	For any nonnegative integer $s$ and any $\delta \in \{0,1\}$, the following set contains at least one irrational number:
	\[ \left\{ \zeta_2\left(j,\frac{1}{4}\right) ~\mid~ j \in \mathbb{Z}\cap[s+3,3s+5],~j \equiv \delta \pmod{2} \right\}. \]
\end{theorem}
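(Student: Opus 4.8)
To prove Theorem \ref{thm2} the plan is to transport Zudilin's construction of ``small'' linear forms (\cite{Zud01,Zud02}) into the Volkenborn-integral framework of Sprang (\cite{Spr20}). Fix a nonnegative integer $s$ and $\delta\in\{0,1\}$, and set
\[
S \;=\; \{\, j\in\mathbb{Z}\cap[s+3,3s+5] \;:\; j\equiv\delta\pmod 2 \,\}.
\]
By Section \ref{Sect_Prel}, the values $\zeta_2(j,1/4)$ and their translates $\zeta_2(j,1/4+k)$ for $k\in\mathbb{Z}_{\geqslant 0}$ are, up to explicit elementary factors, the Volkenborn integrals of $(1+4t)^{1-j}$ and of $(1+4k+4t)^{1-j}$ respectively. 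For each positive integer $n$ I would introduce an explicit rational function
\[
R_n(T) \;=\; \frac{P_n(T)}{\prod_{k=0}^{n}\bigl(T+1/4+k\bigr)^{m}},
\]
where $m$ is a positive integer depending on $s$ and $P_n$ is a polynomial of degree $<m(n+1)$, assembled from shifted factorials supported on the residue class $1\pmod 4$ and normalized by a power of $2$. It is to be arranged that: (i) $R_n$ vanishes at infinity and has poles only at $T\equiv-1/4\pmod{\mathbb{Z}}$, of order $\leqslant m$; (ii) $R_n$ has a well-poised symmetry $R_n(c_n-T)=\pm R_n(T)$ about a suitable center $c_n$; and (iii) $P_n$ vanishes to prescribed orders at the interior poles $T=-1/4-k$ for $1\leqslant k\leqslant n-1$. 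The linear forms we use are then the Volkenborn integrals $L_n:=\int_{\mathbb{Z}_2}R_n^{(\ell)}(t)\,dt$ of the higher-order derivatives of $R_n$, for a derivative order $\ell$ depending on $s$; it is differentiation that pushes the weights up into the window $[s+3,3s+5]$.

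Granting such a construction, the argument proceeds through a $2$-adic irrationality criterion. Suppose that $L_n=a_{0,n}+\sum_{j\in S}a_{j,n}\zeta_2(j,1/4)$ with all $a_{0,n},a_{j,n}\in\mathbb{Q}$; that $v_2(L_n)\geqslant(\mu+o(1))n$; that there are positive integers $d_n$ with $\log_2 d_n\leqslant(\lambda+o(1))n$ and $d_n a_{0,n},d_n a_{j,n}\in\mathbb{Z}$; that all the coefficients $a_{0,n},a_{j,n}$ have $\log_2$-absolute value at most $(\beta+o(1))n$; and that $L_n\neq 0$ for all large $n$. If moreover $\mu>\lambda+\beta$, then at least one $\zeta_2(j,1/4)$ with $j\in S$ is irrational: were they all rational, $D\,d_n\,L_n$ would be a nonzero integer for a suitable fixed $D$, with $v_2(D\,d_n\,L_n)\geqslant(\mu+o(1))n$ but Archimedean absolute value $\leqslant 2^{(\lambda+\beta+o(1))n}$, contradicting $|N|\geqslant 2^{v_2(N)}$ for nonzero integers $N$. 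So three things remain: that $L_n$ really is a linear form in $1$ and $\{\zeta_2(j,1/4):j\in S\}$; the four estimates; and, above all, $L_n\neq 0$.

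The first two points are mechanical. Expanding $R_n^{(\ell)}$ into partial fractions, integrating each term $(1+4k+4t)^{-a}$ against the Volkenborn measure, and using the $p$-adic Hurwitz difference equation (Section \ref{Sect_Prel}) to collapse every translate $\zeta_2(\cdot,1/4+k)$ back to $\zeta_2(\cdot,1/4)$ modulo rational terms, rewrites $L_n$ in the stated form; the vanishing orders (iii) confine the weights $j$ to $[s+3,3s+5]$, and the well-poised symmetry (ii), together with the parity of $\ell$, forces the coefficients of the unwanted parity to cancel, leaving only $j\equiv\delta\pmod 2$ --- which is how both values of $\delta$ are accommodated. The Archimedean bound on the $a_{j,n}$ and the estimate $\log_2 d_n\leqslant(\lambda+o(1))n$ follow from the explicit factorial shape of $P_n$ by elementary prime counting, while $v_2(L_n)\geqslant(\mu+o(1))n$ comes from the $2$-adic divisibility of the normalizing power of $2$ and of the factorials, via Legendre's formula for $v_2(n!)$ and Kummer's theorem for the binomial coefficients occurring in the residues. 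The parameters --- $m$, $\ell$, the factorial exponents, the normalization --- are tuned so that $\mu>\lambda+\beta$ on exactly the range $[s+3,3s+5]$; a sharper choice for small $s$ is what gives the statement about $\zeta_2(7),\zeta_2(9),\zeta_2(11),\zeta_2(13)$.

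The main obstacle, exactly as in the Archimedean case, is proving $L_n\neq 0$: the lower bound on $v_2(L_n)$ is vacuous unless $v_2(L_n)<\infty$. I would handle this by writing $L_n$ (for a well-chosen $\ell$, and if necessary only along a fixed residue class of $n$ modulo a power of $2$) as an explicit finite sum of products of binomial coefficients, then singling out the summand of least $2$-adic valuation and showing that it strictly dominates all the others, so that no cancellation can occur there. This reduces to a delicate inequality between $2$-adic valuations of products $\prod_i\binom{a_i}{b_i}$, which I expect to settle via Kummer's carry-counting together with careful bookkeeping of the progressions $1\pmod 4$ built into $P_n$ --- precisely the ``carefully manipulating the binomial coefficients'' of the abstract. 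Once $L_n\neq 0$ is in hand, the criterion above gives Theorem \ref{thm2}; and the case $\delta=1$ is, via $\zeta_2(j)=\tfrac12\zeta_2(j,1/4)$, equivalent to Theorem \ref{thm1}.
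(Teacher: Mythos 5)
Your outline does track the paper's strategy at a high level: Volkenborn integrals of the $s$-th derivative of a rational function built from Pochhammer symbols at quarter-integer shifts, partial fractions plus the translation formula to collapse everything onto $\zeta_2(j,1/4)$, a symmetry of the rational function to select the parity $\delta$, and an elementary criterion pitting an Archimedean bound on the coefficients against a $2$-adic bound on the form. But as written there is a genuine gap at exactly the point you yourself flag as the main obstacle: the nonvanishing $L_n\neq 0$ is not proved, only announced as something you ``expect'' to settle by Kummer-type carry counting. This is the heart of the matter, and it does not follow from bookkeeping of binomial coefficients alone. In the paper one must (a) specialize to $n=2^m-1$; (b) expand $f^{(s)}$ by the Leibniz rule over $(t+1)^{s+2}\cdots(t+n)^{s+2}g(t)$ and identify the unique dominant index, namely the one where all $s$ derivatives fall on the middle factor $k=2^{m-1}$, which rests on the strict inequality $v_2((k-1)!(n-k)!)\geqslant v_2((2^{m-1}-1)!^2)+1$ for $k\neq 2^{m-1}$ (Lemma \ref{Lem_5_1}); and (c) compute the \emph{exact} valuation $-m$ of the dominant Volkenborn integral and lower-bound all the others, which requires the $\triangle_m$-calculus of Lemmas \ref{Lem_2_3} and \ref{Lem_2_4} (valuations of Volkenborn integrals are not read off from valuations of coefficients) together with the evenness of $\binom{k+2^m-1}{2^m-1}$ for $1\leqslant k\leqslant 2^m-1$. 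None of this machinery, nor any substitute for it, appears in your plan.

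A second gap is quantitative. Since you never fix $R_n$, the inequality $\mu>\lambda+\beta$ is asserted by ``tuning,'' but with the natural construction it fails on the stated window: the coefficient bound $2^{(6s+12+o(1))n}$, the denominator $d_n^{3s+5}=e^{(3s+5+o(1))n}$ and the $2$-adic norm $2^{(-10s-20+o(1))n}$ give a product that diverges once $s\geqslant 3$. The paper rescues the balance with an extra Zudilin-type prime saving, $\Phi_n^{-s-2}d_n^{2s+4-i}a_{n,i,k}\in\mathbb{Z}$ with $\Phi_n=e^{(2\log 2-1+o(1))n}$ (Lemma \ref{Lem_4_3}), which your proposal never mentions and without which the window $[s+3,3s+5]$ is out of reach for this construction. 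Two smaller corrections: the numerator must involve both progressions $1$ and $3\pmod 4$ (in the paper $(t+\tfrac14)_n^{s+2}(t+\tfrac34)_n^{s+2}$), since after the shift $t\mapsto t+\tfrac14$ it is the resulting integer roots $(t+1)^{s+2}\cdots(t+n)^{s+2}$ that produce both the $2$-adic smallness and the dominance argument, and a numerator supported only on $1\pmod 4$ would not; and the confinement of the weights to $[s+3,3s+5]$ comes from the pole orders $\leqslant 2s+4$, the derivative order $s$, and $\deg A_n\leqslant-2$ (which kills $\rho_{n,1}$), not from prescribed vanishing at interior poles — that interior vanishing is what controls the denominator of the constant term $\rho_{n,0}$ (Lemma \ref{Ari_0}).
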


\begin{theorem}\label{thm3}
	For any nonnegative integer $s$, the following set contains at least one irrational number:
	\[ \left\{ \zeta_2\left(j,\frac{1}{4}\right) ~\mid~ j \in \mathbb{Z} \cap [s+3,2s+3] \right\}. \]
\end{theorem}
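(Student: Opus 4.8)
The plan is to construct, for every large integer $n$, an explicit rational function $R_n(x)\in\mathbb{Q}(x)$ and to read off from the Volkenborn integral of an $s$-th derivative of $R_n$ a linear form
\[
L_n \;=\; \int_{\mathbb{Z}_2} R_n^{(s)}(x)\,dx \;=\; \ell_{0,n} \;+\; \sum_{j=s+3}^{2s+3} \ell_{j,n}\,\zeta_2\!\left(j,\tfrac14\right), \qquad \ell_{0,n},\ \ell_{j,n}\in\mathbb{Q},
\]
and then to play the $2$-adic smallness of $L_n$ against the archimedean size of its coefficients. The function $R_n$ will be a quotient of a factorial-type numerator by a product of linear forms $x+\beta$ (with $\beta$ a rational tied to $1/4$, so that the poles avoid $\mathbb{Z}_2$) raised to powers running over $s+1$ consecutive integers; taking $s$ derivatives then raises every pole order into the range that makes the corresponding Hurwitz value one of $\zeta_2(j,1/4)$ with $s+3\leq j\leq 2s+3$, with no Hurwitz value of smaller index intruding — this last point being exactly why the admissible interval has the stated left endpoint. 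Passing from the partial-fraction expansion $R_n^{(s)}(x)=\sum_{\beta,\,j}c_{\beta,j,n}/(x+\beta)^j$ to the displayed identity for $L_n$ uses the Volkenborn-integral description of $\zeta_2(\cdot,1/4)$ from Section \ref{Sect_Prel}.

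Two estimates, routine in spirit, then have to be carried out. For the $2$-adic size one bounds $v_2(\ell_{0,n})$ and $v_2(\ell_{j,n})$ from below by a Legendre/Kummer-type count of the powers of $2$ in the numerator of $R_n$ and in the residues $c_{\beta,j,n}$, obtaining $|L_n|_2\leq 2^{-(c_1+o(1))n}$ with an explicit $c_1>0$. For the archimedean size one bounds the common denominator of the $\ell_{j,n}$ — essentially $\operatorname{lcm}(1,\dots,N_n)$ for an $N_n$ linear in $n$, estimated via the prime number theorem — together with the heights $|\ell_{j,n}|$, getting bounds of the shape $e^{(c_2+o(1))n}$ and $e^{(c_3+o(1))n}$. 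The exponents in the definition of $R_n$ are chosen so that, for an interval of the length occurring in the theorem, the $2$-adic gain strictly dominates, $c_1\log 2>c_2+c_3$; equivalently, $|L_n|_2$ times the archimedean height of a denominator-cleared version of $L_n$ tends to $0$.

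The real difficulty — the step singled out in the abstract — is to prove that $L_n\neq 0$ for infinitely many $n$; it is not \emph{a priori} clear that $L_n$ does not vanish identically in $n$. I would attack this by computing $L_n$ modulo a suitable power of $2$ directly as a sum of products of binomial coefficients — those produced by $\int_{\mathbb{Z}_2}x^k\,dx$ together with those from the Leibniz expansion of $R_n^{(s)}$ and from the residues — and showing that the exact $2$-adic valuation of $L_n$ equals the lower bound found above; equivalently, that after dividing out the expected power of $2$ the surviving binomial sum is a $2$-adic unit. This is the ``careful manipulation of binomial coefficients'' of the abstract: I expect it to reduce, through a Vandermonde-type identity or a telescoping over $2$-adic digits in the spirit of Kummer's theorem, to the nonvanishing of one explicitly computable term.

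With non-vanishing in hand, the conclusion follows from the standard $p$-adic irrationality criterion: if every $\zeta_2(j,1/4)$ with $s+3\leq j\leq 2s+3$ were rational, clearing all denominators would exhibit $L_n$ as a nonzero rational with bounded denominator and numerator of archimedean size $O(\max_j|\ell_{j,n}|)$, whence $|L_n|_2\gg 1/\max_j|\ell_{j,n}|$, contradicting the estimates above for large $n$. The interval $[s+3,2s+3]$ is shorter than the $[s+3,3s+5]$ of Theorem \ref{thm2} because the present construction exploits no parity symmetry of $R_n$: all $s+1$ exponents contribute, the window of pole orders has width only $s+1$, and that is about the most the inequality $c_1\log 2>c_2+c_3$ can sustain once both parities are kept.
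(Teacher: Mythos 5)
Your outline tracks the paper's strategy for this theorem: the paper takes $B_n(t)=2^{(3s+6)n}\,\bigl(t+\tfrac34\bigr)_n^{s+2}/(t)_{n+1}^{s+2}$, sets $T_n=\int_{\mathbb{Z}_2}B_n^{(s)}\bigl(t+\tfrac14\bigr)\mathrm{d}t$, obtains via partial fractions and Lemma \ref{Lem_2_6} a linear form $\sigma_{n,0}+\sum_{i=2}^{s+2}\sigma_{n,i}\,\zeta_2\bigl(i+s+1,\tfrac14\bigr)$ (the index $i=1$ drops out because $\deg B_n\leqslant-2$ makes the residue sum vanish, not merely because "no smaller index intrudes by construction"), controls denominators by $d_n$-powers and sizes by Cauchy's formula, and finishes with Lemma \ref{Lem_2_1}. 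All of that part of your plan is sound and essentially identical in spirit.

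The genuine gap is the nonvanishing of $L_n$, which in your write-up is only a declaration of intent ("I would attack this by\dots I expect it to reduce\dots"). This is precisely the step the whole proof hinges on, and the mechanism is not the one you gesture at: it is not true (or at least not accessible) that the exact $2$-adic valuation of $L_n$ matches the generic lower bound for all $n$; the paper explicitly remarks that $T_n\neq 0$ for general $n$ seems out of reach, and establishes the exact valuation only along the subsequence $n=2^m-1$. That restriction is not cosmetic. One needs (i) the Leibniz expansion of $f^{(s)}$ into terms indexed by $(i_1,\dots,i_n,j)$ with $i_1+\cdots+i_n+j=s$; (ii) the combinatorial fact (Lemma \ref{Lem_5_1}) that for $n=2^m-1$ the quantity $v_2\bigl((k-1)!(n-k)!\bigr)$ attains its minimum \emph{uniquely} at $k=2^{m-1}$, which is what forces the single term $i_{2^{m-1}}=s$ to dominate; (iii) a calculus for lower-bounding $v_2$ of Volkenborn integrals of products of binomial coefficients (the operators $\triangle_m$ of Lemmas \ref{Lem_2_3}--\ref{Lem_2_4}, including the sharpened bound for $p$-th powers), which controls all the other terms; and (iv) an explicit congruence — using that $\binom{k+2^m-1}{2^m-1}$ is even for $1\leqslant k\leqslant 2^m-1$ (Kummer/Lucas) and $g(0)\equiv 1\pmod 2$ — pinning the dominant term's integral at valuation exactly $-m$. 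None of these ingredients, nor any substitute for them, appears in your proposal, so as written the nonvanishing (and hence the theorem) is not proved; a Vandermonde-type identity alone will not produce it, because the issue is a competition among exponentially many Leibniz terms, resolved only by the digit-counting argument at $n=2^m-1$.
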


The special case $s=0$ of Theorem \ref{thm3} gives an alternative proof of the irrationality of $\zeta_2(3)$.


\bigskip

\noindent\textbf{Acknowledgements.} I wish to thank Johannes Sprang for several inspiring discussions in 2021 and 2023.

\bigskip

\subsection{An outline of the proof of Theorem \ref{thm2}}

Our approach is to construct a sequence of \emph{small but nonzero} linear forms in $1$ and $\zeta_2(j,1/4)$ ($s+3 \leqslant j \leqslant 3s+5$, $j \equiv s+1+\delta \pmod{2}$). Then Lemma \ref{Lem_2_1} would imply the desired irrationality result.

We first construct a sequence of rational functions:
\[ A_n(t) := 2^{(6s+12)n} \cdot (4t+2n)^{\delta} \cdot \frac{\left(t+\frac{1}{4}\right)_{n}^{s+2}\left(t+\frac{3}{4}\right)_{n}^{s+2}}{(t)_{n+1}^{2s+4}}. \]
Then the Volkenborn integrals of the $s$-th order derivatives of $A_n(t+1/4)$ ($n=1,2,\ldots$) produce the linear forms we need:
\begin{align*}
S_n &:= \int_{\mathbb{Z}_2} A_n^{(s)}\left(t+\frac{1}{4}\right) \mathrm{d}t  \\
&= \rho_{n,0} + \sum_{2 \leqslant i \leqslant 2s+4 \atop i \equiv \delta \pmod{2}} \rho_{n,i} ~ \zeta_2\left( i+s+1, \frac{1}{4} \right).
\end{align*}

Using familiar techniques in the Archimedean case, we can show that for every large $n$:
\[ \Phi_n^{-s-2}d_n^{3s+5} \rho_{n,i} \in \mathbb{Z}, \quad 0 \leqslant i \leqslant 2s+4 \]
and 
\[ \max_{0\leqslant i \leqslant 2s+4} |\Phi_n^{-s-2}d_n^{3s+5}\rho_{n,i}| \leqslant \exp\left( \left( (4+4\log 2)s +7+ 8\log 2 +o(1) \right)n \right). \]
Here $d_n = \operatorname{lcm}[1,2,\ldots,n]$ and $\Phi_n$ is a product of certain primes (see \eqref{def_Phi}).

For the $p$-adic aspect, it is not hard to bound the $2$-adic norm of the linear forms from above for every large $n$:
\[ |\Phi_n^{-s-2}d_n^{3s+5}S_n|_2 \leqslant 2^{(-10s-20+o(1))n}. \]
So the linear forms are small enough. But a serious difficulty is to prove that $S_n \neq 0$. It is reasonable to conjecture that $S_n \neq 0$ for every $n$; however, it seems quite hard to prove so. 

In order to apply Lemma \ref{Lem_2_1}, we only need $S_n \neq 0$ for infinitely many $n$. We adapt Sprang's method in \cite{Spr20} to our situation and prove that $S_n \neq 0$ when $n=2^m-1$, $m \geqslant 2$. It eventually relies on the arithmetic properties of the binomial coefficients. More precisely, after applying the Leibniz rule we can write 
\[ A_n^{(s)}\left(t+\frac{1}{4}\right)  = \sum_{\underline{i}\in I} f_{\underline{i}}(t) \]
as a sum of finitely many functions. We find a particular index $\underline{i}^{*}$ such that $f_{\underline{i}^{*}}(t)$ dominates the sum  in the sense that:

\begin{align}
&\bullet\quad \int_{\mathbb{Z}_2}f_{\underline{i}^{*}}(t) \mathrm{d}t \neq 0, \label{nonv}\\
&\bullet\quad v_2\left( \int_{\mathbb{Z}_2}f_{\underline{i}}(t) \mathrm{d}t  \right) > v_2\left( \int_{\mathbb{Z}_2}f_{\underline{i}^{*}}(t) \mathrm{d}t  \right) \quad\text{~for every~} \underline{i} \neq \underline{i}^{*}. \label{domi}
\end{align}
The first key point \eqref{nonv} is established by the adaptation of Sprang's method, see Lemma \ref{Lem_2_3} and Lemma \ref{Lem_2_4}. On the other hand, an elementary observation, Lemma \ref{Lem_5_1},  plays the central role for \eqref{domi}.

\bigskip

\section{Preliminaries}\label{Sect_Prel}

\subsection{An elementary irrationality criterion}
A basic tool for proving irrationality or linear independence is to construct small but nonzero linear forms. The following elementary lemma is sufficient for our needs.

\begin{lemma}\label{Lem_2_1}
	Let $\alpha_1,\alpha_2,\ldots,\alpha_m \in \mathbb{Q}_p$ and let $a_{n,0},a_{n,1},\ldots,a_{n,m} \in \mathbb{Z}$ ($n=1,2,3,\ldots$) be $m+1$ sequences of integers such that
	\begin{equation}\label{eqn_2_1}
	\lim_{n \rightarrow \infty} \left( \max_{0\leqslant j \leqslant m} |a_{n,j}| \cdot \left|a_{n,0}+\sum_{j=1}^{m} a_{n,j}\alpha_j \right|_p \right) = 0 
	\end{equation}
	and 
	\[a_{n,0}+\sum_{j=1}^{m} a_{n,j}\alpha_j  \neq 0 \quad\text{infinitely often}.\]
	Then at least one of $\alpha_1,\alpha_2,\ldots,\alpha_m$ is irrational.
\end{lemma}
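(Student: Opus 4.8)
The plan is to argue by contradiction: suppose all of $\alpha_1,\ldots,\alpha_m$ are rational, say $\alpha_j = b_j/c$ with $b_j \in \mathbb{Z}$ and a common denominator $c \in \mathbb{Z}_{\geqslant 1}$. Then for each $n$ the linear form $L_n := a_{n,0}+\sum_{j=1}^m a_{n,j}\alpha_j$ is a rational number whose denominator divides $c$; more precisely $c L_n = c\, a_{n,0}+\sum_{j=1}^m a_{n,j} b_j \in \mathbb{Z}$. First I would restrict attention to the infinitely many indices $n$ with $L_n \neq 0$, along which $c L_n$ is a nonzero integer. For such $n$, the elementary lower bound $|c L_n|_p \geqslant |c L_n|_\infty^{-1}$ holds, since for any nonzero integer $N$ one has $|N|_p \cdot |N|_\infty \geqslant 1$ (indeed $|N|_p \geqslant 1/|N|$ because $v_p(N)\leqslant \log_p |N|$). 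Combining, $|L_n|_p = |c|_p \,|cL_n|_p \geqslant |c|_p / |cL_n|$.

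Next I would produce an upper bound for $|cL_n|$. Using the triangle inequality in the Archimedean norm, $|cL_n| \leqslant |c|\,|a_{n,0}| + \sum_{j=1}^m |a_{n,j}|\,|b_j| \leqslant C \cdot \max_{0\leqslant j\leqslant m}|a_{n,j}|$, where $C := |c| + \sum_{j=1}^m |b_j|$ is a constant independent of $n$. Plugging this into the previous inequality gives, for every $n$ in our infinite set with $L_n \neq 0$,
\[
\max_{0\leqslant j\leqslant m}|a_{n,j}| \cdot |L_n|_p \;\geqslant\; \frac{|c|_p}{C}\;>\;0.
\]
This contradicts the hypothesis \eqref{eqn_2_1}, which forces the left-hand side to tend to $0$ as $n\to\infty$ (in particular along our infinite subsequence). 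Hence the assumption that all $\alpha_j$ are rational is untenable, and at least one of them is irrational.

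There is essentially no hard step here; the only thing to be careful about is the direction of the Archimedean triangle inequality (we need an upper bound on $|cL_n|$, so the reverse triangle inequality is not what is wanted) and the elementary product bound $|N|_p|N|_\infty \geqslant 1$ for nonzero integers $N$, which is the $p$-adic analogue of "a nonzero integer has absolute value at least $1$" and is where the integrality of the $a_{n,j}$ is used. One should also note that the subsequence along which $L_n\neq 0$ is infinite by hypothesis, so that the contradiction with the limit being $0$ is legitimate. Everything else is routine.
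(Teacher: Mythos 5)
Your proposal is correct and follows essentially the same route as the paper: assume all $\alpha_j$ rational, clear denominators so that $cL_n$ is a nonzero integer along the infinite subsequence, use $|N|_p\geqslant 1/|N|$ for nonzero integers together with the Archimedean triangle-inequality bound $|cL_n|\leqslant C\max_j|a_{n,j}|$, and contradict \eqref{eqn_2_1}. One tiny slip: $|L_n|_p=|cL_n|_p/|c|_p$ rather than $|c|_p\,|cL_n|_p$, but since $|c|_p\leqslant 1$ your stated lower bound $\max_j|a_{n,j}|\cdot|L_n|_p\geqslant |c|_p/C>0$ is still valid and the contradiction goes through unchanged.
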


\begin{proof}
	Passing to a subsequence, we may assume that $a_{n,0}+\sum_{j=1}^{m} a_{n,j}\alpha_j  \neq 0$ for every $n$. (In particular, this implies $\max_{0\leqslant j \leqslant m} |a_{n,j}| \neq 0$.) Suppose that all $\alpha_j$ are rational numbers and let $D>0$ be a common denominator of $\alpha_j$. Then $D\cdot\left(a_{n,0}+\sum_{j=1}^{m} a_{n,j}\alpha_j\right)$ is a nonzero integer, so we have trivially
	\begin{align*} \left| D\cdot\left(a_{n,0}+\sum_{j=1}^{m} a_{n,j}\alpha_j\right) \right|_p &\geqslant \frac{1}{\left| D\cdot\left(a_{n,0}+\sum_{j=1}^{m} a_{n,j}\alpha_j\right) \right|} \\ &\geqslant  \frac{1}{D\cdot\left(1+ \sum_{j=1}^{m}|\alpha_j|\right)\cdot \max_{0\leqslant j \leqslant m} |a_{n,j}|},
	\end{align*}
	which implies 
	\[ \max_{0\leqslant j \leqslant m} |a_{n,j}| \cdot \left|a_{n,0}+\sum_{j=1}^{m} a_{n,j}\alpha_j \right|_p \geqslant \frac{1}{|D|_p \cdot D\cdot\left(1+ \sum_{j=1}^{m}|\alpha_j|\right)}.\]
	Since the right-hand side above is independent of $n$, we obtain a contradiction to \eqref{eqn_2_1}. Thus we conclude that at least one of $\alpha_1,\alpha_2,\ldots,\alpha_m$ is irrational. 
\end{proof}

\subsection{Volkenborn integrals}

In this subsection, we briefly recall the definitions of strictly differentiable functions and Volkenborn integrals. We refer the reader to Robert's textbook \cite[Chapter 5]{Rob00} for more details. 

Let $K$ be either $\mathbb{Z}_p$ or $\mathbb{Q}_p$. A function $f:\mathbb{Z}_p \rightarrow K$ is said to be \emph{strictly differentiable} on $\mathbb{Z}_p$ -- denoted by $f \in S^{1}(\mathbb{Z}_p,K)$ -- if 
\[f(x) - f(y) = (x-y) g(x,y)\]
for some continuous function $g(x,y)$ on $\mathbb{Z}_p \times \mathbb{Z}_p$. As a basic example, every convergent power series on $\mathbb{Z}_p$ is strictly differentiable on $\mathbb{Z}_p$.

A function $f: \mathbb{Z}_p \rightarrow \mathbb{Q}_p$ is said to be \emph{Volkenborn integrable} \cite{Vol72} if the sequence
\[\frac{1}{p^n} \sum_{k=0}^{p^n-1} f(k)\]
converges $p$-adically as $n \rightarrow \infty$. In this case, the value
\[ \int_{\mathbb{Z}_p} f(t)\mathrm{d}t := \lim_{n \rightarrow \infty} \frac{1}{p^n} \sum_{k=0}^{p^n-1} f(k) \]
is called the \emph{Volkenborn integral} of $f$. As a basic example, every strictly differentiable function on $\mathbb{Z}_p$ is Volkenborn integrable. 

The Volkenborn integral is not translation-invariant; indeed, we have the following.

\begin{lemma}\label{Lem_translation_formula}
	Let $f \in S^{1}\left(\mathbb{Z}_p,\mathbb{Q}_p\right)$ and let $k$ be a positive integer. We have
	\[ \int_{\mathbb{Z}_p} f(t+k)\mathrm{d}t = \int_{\mathbb{Z}_p} f(t)\mathrm{d}t + \sum_{\ell=0}^{k-1} f'(\ell). \]
\end{lemma}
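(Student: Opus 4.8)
The plan is to first reduce to the case $k=1$ and then compute that case directly from the definition of the Volkenborn integral. For the reduction, note that if $f \in S^{1}(\mathbb{Z}_p,\mathbb{Q}_p)$ then for any fixed nonnegative integer $j$ the translate $f_j(t) := f(t+j)$ is again in $S^{1}(\mathbb{Z}_p,\mathbb{Q}_p)$, with $f_j'(t) = f'(t+j)$; in particular $f_j'(0) = f'(j)$, and $f_j$ is Volkenborn integrable. Writing $f(t+k) = f_{k-1}(t+1)$ and applying the $k=1$ statement to $f_{k-1}, f_{k-2}, \ldots, f_0$ in turn, the intermediate integrals telescope and the derivative terms accumulate to $\sum_{\ell=0}^{k-1} f'(\ell)$, which is exactly the claimed formula. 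So everything comes down to $k=1$.

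For $k=1$, I would use the defining sums. By definition,
\[
\int_{\mathbb{Z}_p} f(t+1)\,\mathrm{d}t - \int_{\mathbb{Z}_p} f(t)\,\mathrm{d}t
= \lim_{n\to\infty} \frac{1}{p^n}\sum_{j=0}^{p^n-1}\bigl(f(j+1) - f(j)\bigr)
= \lim_{n\to\infty} \frac{f(p^n) - f(0)}{p^n},
\]
the inner sum having telescoped. Both Volkenborn integrals on the left exist because $f$ and its translate lie in $S^1(\mathbb{Z}_p,\mathbb{Q}_p)$, so the limit on the right exists as well, and it remains to identify it with $f'(0)$.

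To evaluate that limit, I would invoke strict differentiability in the form given in the excerpt: there is a continuous $g:\mathbb{Z}_p\times\mathbb{Z}_p\to\mathbb{Q}_p$ with $f(x)-f(y)=(x-y)g(x,y)$, and then $f'(x)=g(x,x)$. Taking $x=p^n$, $y=0$ gives $f(p^n)-f(0) = p^n\, g(p^n,0)$, hence
\[
\frac{f(p^n)-f(0)}{p^n} = g(p^n,0) \xrightarrow[n\to\infty]{} g(0,0) = f'(0),
\]
using $p^n \to 0$ $p$-adically and continuity of $g$. This proves the $k=1$ case, and combined with the reduction above, the lemma follows.

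I do not expect a serious obstacle here: the only points needing care are that translates of $S^1$ functions are again $S^1$ (so that the integrals in the telescoping argument all make sense) and that the telescoped limit may legitimately be taken termwise, both of which are immediate from the definitions recalled in the excerpt. The one genuinely substantive input is the identification of $\lim_n p^{-n}(f(p^n)-f(0))$ with $f'(0)$, and strict differentiability is precisely what makes this painless via the factorization $f(x)-f(y)=(x-y)g(x,y)$.
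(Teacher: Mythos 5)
Your proof is correct, and it is exactly the definitional argument the paper has in mind: the paper's proof of this lemma is simply ``it follows directly from the definition'' with a reference to Robert, and your telescoping of the Riemann-type sums together with the identification of $\lim_n p^{-n}(f(p^n+\ell)-f(\ell))$ with $f'(\ell)$ via the continuous factor $g$ is the standard way to carry that out. No gaps.
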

\begin{proof}
	It follows directly from the definition. See also \cite[pp. 267--268]{Rob00}.
\end{proof}

For any positive integer $k$, we denote by $k_{-}$ the nonnegative integer obtained by deleting the leading digit in the $p$-adic expansion of $k$. In other words, if $k=a_0+a_1p+\cdots+a_{l}p^{l}$, $a_0,\ldots,a_l \in \{0,1,\ldots,p-1\}$, $a_l \neq 0$, then $k_{-} = a_0+a_1p+\cdots+a_{l-1}p^{l-1}$. For the purpose of estimating the $p$-adic norm of Volkenborn integrals, we introduce the following notations, which are slightly different from the notations used by Sprang in \cite{Spr20}. 

\begin{definition}
Let $f: \mathbb{Z}_p \rightarrow \mathbb{Q}_p$ be a function. For every nonnegative integer $m$, we define
\[\triangle_m(f) := \inf_{k \geqslant p^m} v_p\left( \frac{f(k)-f(k_{-})}{k-k_{-}} \right). \]
We also define
\[\triangle(f) := \inf\{ 1+v_p(f(0)),\triangle_{0}(f) \}.\]
\end{definition}
From the definition, it is clear that 
\[-\infty \leqslant \triangle(f) \leqslant \triangle_0(f) \leqslant \triangle_1(f) \leqslant \triangle_2(f) \leqslant  \triangle_3(f) \leqslant \cdots \leqslant +\infty,\]
and $\triangle(f) > -\infty$ if $f$ is strictly differentiable on $\mathbb{Z}_p$. For a constant function $f$, we have $\triangle_0(f) = +\infty$.

\begin{lemma}\label{Lem_2_3}
	Let $f \in S^{1}(\mathbb{Z}_p,\mathbb{Q}_p)$. Then
	\begin{equation}\label{eqn_2_2}
	 v_p\left( \int_{\mathbb{Z}_p} f(t)\mathrm{d}t \right) \geqslant \triangle(f) - 1.
	 \end{equation}
	Moreover, for every nonnegative integer $m$ we have
	\begin{equation}\label{eqn_2_3}
	\int_{\mathbb{Z}_p} f(t)\mathrm{d}t \equiv \frac{1}{p^m}\sum_{k=0}^{p^m-1} f(k) \pmod{p^{\triangle_m(f)-1}\mathbb{Z}_p}.
	\end{equation}
\end{lemma}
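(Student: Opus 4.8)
The plan is to prove \eqref{eqn_2_3} first and then deduce \eqref{eqn_2_2} from it. Recall that the Volkenborn integral is defined as the $p$-adic limit of the partial sums $\sigma_N(f) := p^{-N}\sum_{k=0}^{p^N-1} f(k)$ as $N \to \infty$. So the natural strategy for \eqref{eqn_2_3} is to control the difference $\sigma_{N+1}(f) - \sigma_N(f)$ for $N \geqslant m$ and show it lies in $p^{\triangle_m(f)-1}\mathbb{Z}_p$; then, since $\triangle_m(f) \leqslant \triangle_N(f)$ is increasing (indeed the congruence we want only uses the fixed modulus $p^{\triangle_m(f)-1}$), summing a telescoping series and passing to the limit gives $\int_{\mathbb{Z}_p} f\,\mathrm{d}t \equiv \sigma_m(f) \pmod{p^{\triangle_m(f)-1}\mathbb{Z}_p}$.

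To estimate $\sigma_{N+1}(f) - \sigma_N(f)$, I would partition $\{0,1,\ldots,p^{N+1}-1\}$ by the leading ($p^N$-)digit: write $k = a p^N + r$ with $a \in \{0,\ldots,p-1\}$ and $0 \leqslant r < p^N$. For $a \geqslant 1$ one has $k_{-} = r$, so $f(k) - f(r) = (k - r)\,\frac{f(k)-f(k_-)}{k-k_-} = a p^N \cdot \frac{f(k)-f(k_-)}{k-k_-}$, and the quotient has $v_p \geqslant \triangle_N(f) \geqslant \triangle_m(f)$ because $k \geqslant p^N \geqslant p^m$. Reorganizing the sum,
\begin{align*}
\sigma_{N+1}(f) - \sigma_N(f)
&= \frac{1}{p^{N+1}}\sum_{a=0}^{p-1}\sum_{r=0}^{p^N-1} f(ap^N+r) - \frac{1}{p^N}\sum_{r=0}^{p^N-1} f(r) \\
&= \frac{1}{p^{N+1}}\sum_{a=1}^{p-1}\sum_{r=0}^{p^N-1}\bigl( f(ap^N+r) - f(r) \bigr) \\
&= \frac{1}{p}\sum_{a=1}^{p-1} a \sum_{r=0}^{p^N-1} \frac{f(ap^N+r)-f((ap^N+r)_-)}{(ap^N+r)-(ap^N+r)_-},
\end{align*}
where in the $a=0$ block the contributions cancel against $\sigma_N(f)$ exactly. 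Each inner quotient has $p$-adic valuation $\geqslant \triangle_m(f)$, there are $p^N$ terms, the outer factor is $p^{-1}$, and $p^N \geqslant p$ for $N \geqslant 1$ (and the case $N = m = 0$ one checks directly); hence $v_p(\sigma_{N+1}(f)-\sigma_N(f)) \geqslant \triangle_m(f) - 1$ for all $N \geqslant m$. This is exactly what the telescoping argument needs.

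For \eqref{eqn_2_2}, take $m = 0$ in \eqref{eqn_2_3}: then $\int_{\mathbb{Z}_p} f\,\mathrm{d}t \equiv \sigma_0(f) = f(0) \pmod{p^{\triangle_0(f)-1}\mathbb{Z}_p}$, so $v_p\bigl(\int_{\mathbb{Z}_p} f\,\mathrm{d}t\bigr) \geqslant \min\{ v_p(f(0)),\, \triangle_0(f)-1 \} \geqslant \min\{ 1 + v_p(f(0)),\, \triangle_0(f) \} - 1 = \triangle(f) - 1$, using $v_p(f(0)) \geqslant (1+v_p(f(0))) - 1$. The one technical point to be careful about is the bookkeeping with the increasing chain $\triangle_m(f) \leqslant \triangle_N(f)$: the congruence at level $m$ is coarser than at level $N$, so every term in the telescoping sum past level $m$ is automatically $\equiv 0 \pmod{p^{\triangle_m(f)-1}}$, and the limit inherits the congruence since $\mathbb{Z}_p$ is complete and $p^{\triangle_m(f)-1}\mathbb{Z}_p$ is closed. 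The main obstacle, such as it is, is purely organizational — getting the index manipulation in the display above right and handling the small-$N$ edge cases — rather than anything conceptually deep; this is essentially the standard computation behind the Volkenborn integral, phrased through the $\triangle_m$ notation.
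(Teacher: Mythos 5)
Your proposal is correct and follows essentially the same route as the paper: both decompose the difference of the Riemann-type sums $p^{-N}\sum_{k<p^N}f(k)$ according to the leading $p$-adic digit (so that the differences $f(k)-f(k_-)$ appear), bound each term by $\triangle_m(f)-1$, telescope and pass to the limit, and then obtain \eqref{eqn_2_2} from the case $m=0$ via the definition of $\triangle(f)$. The only cosmetic point is that your remark about the number of terms ($p^N\geqslant p$) is unnecessary: the ultrametric inequality alone gives $v_p(\sigma_{N+1}(f)-\sigma_N(f))\geqslant \triangle_m(f)-1$ for every $N\geqslant m$, including $N=m=0$.
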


\begin{proof}
	It is straightforward to check that, for any integer $M > m$ we have
	\[\frac{1}{p^M} \sum_{k=0}^{p^M-1} f(k) = \frac{1}{p^m}\sum_{k=0}^{p^m-1} f(k) + \sum_{l=m}^{M-1}\sum_{k=p^l}^{p^{l+1}-1} \frac{f(k)-f(k_{-})}{p^{l+1}}. \] 
	For any integer $k \in[p^l,p^{l+1}-1]$ ($l \geqslant m$) we have 
	\[v_p\left( \frac{f(k)-f(k_{-})}{p^{l+1}} \right) \geqslant \triangle_m(f) -1\]
	by the definition of $\triangle_m(f)$. Therefore, 
	\[ \frac{1}{p^M} \sum_{k=0}^{p^M-1} f(k) \equiv \frac{1}{p^m}\sum_{k=0}^{p^m-1} f(k) \pmod{p^{\triangle_m(f)-1}\mathbb{Z}_p}. \]
	Taking the limit as $M\rightarrow \infty$ we obtain \eqref{eqn_2_3}.
	
	In particular, 
	\[\int_{\mathbb{Z}_p} f(t)\mathrm{d}t \equiv f(0) \pmod{p^{\triangle_0(f)-1}\mathbb{Z}_p}. \]
	Now \eqref{eqn_2_2} follows immediately from the definition of $\triangle(f)$.
\end{proof}

\begin{lemma}\label{Lem_2_4}
	We have the following properties for the operators $\triangle$ and $\triangle_m$ ($m=0,1,2,\ldots$).
	\begin{enumerate}
		\item[(1)] If $f(t) = \sum_{j=0}^{\infty} a_jt^{j} \in \mathbb{Z}_{p}[[t]]$ and $\lim_{j \rightarrow \infty} |a_j|_p = 0$, then $\triangle_m(f) \geqslant \triangle(f) \geqslant 0$.
		\item[(2)] If $f,g \in S^{1}(\mathbb{Z}_{p},\mathbb{Z}_{p})$, then $\triangle(f\cdot g) \geqslant \min\{\triangle(f),\triangle(g)\}$ and $\triangle_m(f\cdot g) \geqslant \min\{\triangle_m(f),\triangle_m(g)\}$.
		\item[(3)] For $n,j \in \mathbb{Z}$, $n>0$, and $f(t) = \binom{t+j}{n}$, we have $\triangle_m(f) \geqslant \triangle(f) \geqslant -\left\lfloor \frac{\log n}{\log p} \right\rfloor$. Moreover, for $m > \left\lfloor \frac{\log n}{\log p} \right\rfloor$ we have
		\[\triangle_m(f^p) \geqslant -\left\lfloor \frac{\log n}{\log p} \right\rfloor + 1.\]
	\end{enumerate} 
\end{lemma}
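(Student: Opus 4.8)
The plan is to handle the three parts in increasing order of difficulty, using throughout two simple observations: first, $\triangle_m(g)\geqslant\triangle(g)$ holds automatically (the monotonicity noted after the definition), so for (1) and (3) it is enough to bound $\triangle(g)=\inf\{1+v_p(g(0)),\triangle_0(g)\}$ from below; second, in each part the relevant function maps $\mathbb{Z}_p$ into $\mathbb{Z}_p$ (a power series with $\mathbb{Z}_p$-coefficients tending to $0$, a product of two such, or a binomial polynomial — which sends $\mathbb{Z}$ to $\mathbb{Z}$, hence $\mathbb{Z}_p$ to $\mathbb{Z}_p$), so every value $g(k)$, $g(k_-)$ lies in $\mathbb{Z}_p$. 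For a positive integer $k=a_0+a_1p+\cdots+a_lp^l$ with $a_l\neq 0$ I write $l=l(k)$ for the position of the leading digit, so that $k-k_-=a_lp^l$ and $v_p(k-k_-)=l$.

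For (1): since $f(0)=a_0\in\mathbb{Z}_p$ the term $1+v_p(f(0))$ is $\geqslant 1$, so it remains to show $\triangle_0(f)\geqslant 0$. I would expand $f(k)-f(k_-)=\sum_{j\geqslant 1}a_j(k^j-k_-^j)$ and factor $k-k_-$ out of each $k^j-k_-^j=(k-k_-)\sum_{i=0}^{j-1}k^ik_-^{j-1-i}$; the resulting quotient is a convergent series with coefficients in $\mathbb{Z}_p$, hence lies in $\mathbb{Z}_p$. For (2): the only ingredient is the divided-difference Leibniz rule
\[\frac{(fg)(k)-(fg)(k_-)}{k-k_-}=\frac{f(k)-f(k_-)}{k-k_-}\,g(k)+f(k_-)\,\frac{g(k)-g(k_-)}{k-k_-},\]
in which $g(k)$ and $f(k_-)$ lie in $\mathbb{Z}_p$; taking $v_p$ gives $\triangle_m(fg)\geqslant\min\{\triangle_m(f),\triangle_m(g)\}$, and the inequality $1+v_p(f(0)g(0))\geqslant 1+v_p(f(0))$ (using $v_p(g(0))\geqslant 0$) together with its symmetric counterpart handles the extra term in $\triangle$.

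The main obstacle is part (3). The naive approach — clearing the denominator $n!$ in $\binom{t+j}{n}$ — only yields $v_p\geqslant -v_p(n!)$, which is far weaker than the claimed $-\lfloor\log n/\log p\rfloor$. The key idea I would use instead is Vandermonde's convolution
\[\binom{k+j}{n}-\binom{k_-+j}{n}=\sum_{r=1}^{n}\binom{k_-+j}{n-r}\binom{a_lp^l}{r},\]
which reduces matters to estimating $v_p\binom{a_lp^l}{r}$. Put $L=\lfloor\log n/\log p\rfloor$, so $p^L\leqslant n<p^{L+1}$ and $\max_{1\leqslant r\leqslant n}v_p(r)=L$. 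If $l>L$ then $r\leqslant n<p^l$ for all $r$ in the sum, and counting valuations in $\binom{a_lp^l}{r}=\frac{(a_lp^l)(a_lp^l-1)\cdots(a_lp^l-r+1)}{r!}$ — the factor $a_lp^l$ contributes $l$, each factor $a_lp^l-i$ with $1\leqslant i\leqslant r-1<p^l$ contributes $v_p(i)$, and division by $r!$ removes $v_p((r-1)!)+v_p(r)$ — gives exactly $v_p\binom{a_lp^l}{r}=l-v_p(r)\geqslant l-L$. Hence $v_p\bigl(\binom{k+j}{n}-\binom{k_-+j}{n}\bigr)\geqslant l-L$ when $l>L$, so the divided difference has $v_p\geqslant -L$; when $l\leqslant L$ the bound $v_p(f(k)-f(k_-))\geqslant 0$ together with $v_p(k-k_-)=l\leqslant L$ already gives $v_p\geqslant -L$. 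Thus $\triangle_0(f)\geqslant -L$, and since $1+v_p(f(0))\geqslant 1>-L$ we get $\triangle(f)\geqslant -L$.

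Finally, for the improvement for $f^p$ with $m>L$: for $k\geqslant p^m$ we have $l(k)\geqslant m\geqslant L+1>L$, so the estimate above applies and $v_p(f(k)-f(k_-))\geqslant l-L\geqslant 1$; in particular $f(k)\equiv f(k_-)\pmod p$. I would then factor $f(k)^p-f(k_-)^p=(f(k)-f(k_-))\sum_{i=0}^{p-1}f(k)^if(k_-)^{p-1-i}$ and note that the second factor is $\equiv p\,f(k_-)^{p-1}\equiv 0\pmod p$, contributing one extra power of $p$. Therefore $v_p(f(k)^p-f(k_-)^p)\geqslant (l-L)+1$, so the divided difference of $f^p$ has $v_p\geqslant (l-L+1)-l=-L+1$, which gives $\triangle_m(f^p)\geqslant -L+1$ as claimed. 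The Vandermonde estimate in part (3) is the one place where genuine care with binomial coefficients is needed; everything else is formal manipulation.
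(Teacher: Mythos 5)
Your proposal is correct and follows essentially the same route as the paper: the same series expansion for part (1), the same divided-difference Leibniz identity for part (2), the Vandermonde convolution for part (3), and the same factorization $f(k)^p-f(k_-)^p=(f(k)-f(k_-))\sum_i f(k)^if(k_-)^{p-1-i}$ with $f(k)\equiv f(k_-)\pmod p$ for the refined bound. The only (cosmetic) difference is in part (3), where you bound $v_p\binom{k-k_-}{r}$ by a direct digit count with a case split on the leading-digit position, while the paper gets the uniform bound at once from the identity $\frac{1}{k-k_-}\binom{k-k_-}{i}=\frac{1}{i}\binom{k-k_--1}{i-1}$.
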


\begin{proof}

$(1)$ Clearly $v_p(f(0)) \geqslant 0$. For any positive integer $k$ we notice that
		\[\frac{f(k)-f(k_{-})}{k-k_{-}} = \sum_{j=1}^{\infty} a_j \sum_{i=0}^{j-1} k^{i}k_{-}^{j-1-i}\]
is absolutely convergent to an element in $\mathbb{Z}_p$. So $\triangle_m(f) \geqslant \triangle(f) \geqslant 0$ for any nonnegative integer $m$.
		
$(2)$ Since $f(0),g(0) \in \mathbb{Z}_p$, we have $v_p(f(0)g(0)) \geqslant \min\{v_p(f(0)),v_p(g(0))\}$. For any positive integer $k$, we have
		\[\frac{f(k)g(k)-f(k_{-})g(k_{-})}{k-k_{-}} = g(k_{-})\cdot\frac{f(k)-f(k_{-})}{k-k_{-}} + f(k)\cdot\frac{g(k)-g(k_{-})}{k-k_{-}}.\]
Since $g(k_{-}), f(k) \in \mathbb{Z}_p$, we have 
		\[v_p\left(\frac{f(k)g(k)-f(k_{-})g(k_{-})}{k-k_{-}}\right) \geqslant \min\left\{ v_p\left(\frac{f(k)-f(k_{-})}{k-k_{-}}\right), v_p\left(\frac{g(k)-g(k_{-})}{k-k_{-}}\right) \right\}.\]
Now the conclusion follows easily.
		
$(3)$ Clearly $v_p(f(0)) \geqslant 0$. For any positive integer $k$, using the well-known identity
		\[\binom{x+y}{n} = \sum_{i=0}^{n} \binom{x}{i}\binom{y}{n-i}\]
we obtain
		\[ \frac{f(k)-f(k_{-})}{k-k_{-}} = \frac{1}{k-k_{-}} \sum_{i=1}^{n} \binom{k-k_{-}}{i}\binom{k_{-}+j}{n-i} = \sum_{i=1}^{n} \frac{1}{i}\binom{k-k_{-}-1}{i-1}\binom{k_{-}+j}{n-i}. \]
Therefore, 
		\[v_p\left( \frac{f(k)-f(k_{-})}{k-k_{-}} \right) \geqslant \min_{1 \leqslant i \leqslant n} v_p\left( \frac{1}{i} \right) = -\left\lfloor \frac{\log n}{\log p} \right\rfloor.\]
We conclude that for any nonnegative integer $m$ we have $\triangle_m(f) \geqslant \triangle(f) \geqslant -\left\lfloor \frac{\log n}{\log p} \right\rfloor$.
		
Suppose now $m > \left\lfloor \frac{\log n}{\log p} \right\rfloor$. For any integer $k \geqslant p^m$, we have $v_p(k-k_{-}) \geqslant m > \left\lfloor \frac{\log n}{\log p} \right\rfloor$ and $v_p(f(k)-f(k_{-})) > 0$. So $f(k) \equiv f(k_{-}) \pmod{p}$ and 
\[\sum_{i=0}^{p-1} f(k)^{i}f(k_{-})^{p-1-i} \equiv p \cdot f(k)^{p-1} \equiv 0 \pmod{p}. \]
Therefore, for any integer $k \geqslant p^m$ we have
\begin{align*}
v_p\left( \frac{f(k)^p - f(k_{-})^p}{k - k_{-}} \right) &= v_p\left( \frac{f(k)-f(k_{-})}{k-k_{-}} \cdot \sum_{i=0}^{p-1} f(k)^{i}f(k_{-})^{p-1-i} \right) \\
&\geqslant -\left\lfloor \frac{\log n}{\log p} \right\rfloor + 1,
\end{align*}
which implies 
\[\triangle_m(f^p) \geqslant -\left\lfloor \frac{\log n}{\log p} \right\rfloor + 1\]
for any integer $m>\left\lfloor \frac{\log n}{\log p} \right\rfloor$.

\end{proof}

\subsection{The $p$-adic Hurwitz zeta functions and $L$-functions}
In this subsection, we briefly recall the definitions of $p$-adic Hurwitz zeta functions and $p$-adic $L$-functions. We refer the reader to Cohen's textbook \cite[Chapter 11]{Coh07} for more details. 

We set $q_p = p$ if $p$ is an odd prime, and $q_2 = 4$. The units $\mathbb{Z}_p^{\times}$ of the $p$-adic integers decompose canonically as
\[ \mathbb{Z}_p^{\times} \cong \mu_{\varphi(q_p)}\left(\mathbb{Z}_p\right) \times \left(1+q_p\mathbb{Z}_p\right).  \]
Here $\mu_n(R)$ denotes the group of $n$-th roots of unity in a ring $R$ and $\varphi(\cdot)$ denotes Euler's totient function. The canonical projection
\[\omega: \mathbb{Z}_p^{\times} \rightarrow \mu_{\varphi(q_p)}\left(\mathbb{Z}_p\right)\]
is called the \emph{Techm\"uller character}. We extend $\omega$ to a map $\mathbb{Q}_p^{\times} \rightarrow \mathbb{Q}_p^{\times}$ by setting
\[\omega(x) := p^{v_p(x)} \omega\left( \frac{x}{p^{v_p(x)}} \right) \]
and define $\langle x \rangle := x/\omega(x)$. 

For $s \in \mathbb{C}_p \setminus \{1\}$ such that $|s|_p < q_pp^{-1/(p-1)}$ and $x \in \mathbb{Q}_p$ such that $|x|_p \geqslant q_p$, we define 
\[ \zeta_p(s,x) := \frac{1}{s-1}\int_{\mathbb{Z}_p} \langle t+x \rangle^{1-s} \mathrm{d}t. \]
For fixed $x \in \mathbb{Q}_p$ such that $|x|_p \geqslant q_p$, the \emph{$p$-adic Hurwitz zeta function} $\zeta_p(\cdot,x)$ is a $p$-adic meromorphic function on $|s|_p < q_pp^{-1/(p-1)}$, which in addition is analytic, except for a simple pole at $s = 1$ with residue $1$. Moreover, we have the reflection formula \cite[Theorem 11.2.9]{Coh07}:
\begin{equation}\label{eqn_reflection_formula}
\zeta_p(s,x) =\zeta_p(s,1-x).
\end{equation}

\medskip

\begin{lemma}\label{Lem_2_6}
	Let $x \in \mathbb{Q}_p$ such that $|x|_p \geqslant q_p $. For any integer $j \geqslant 1$ we have
	\[  \int_{\mathbb{Z}_p} \frac{\mathrm{d}t}{(t+x)^j} = j\cdot\omega(x)^{-j}\cdot\zeta_p(j+1,x). \]
	In particular, 
	\[\int_{\mathbb{Z}_2} \frac{\mathrm{d}t}{\left(t+\frac{1}{4}\right)^j} = j4^j\cdot\zeta_2\left(j+1,\frac{1}{4}\right). \]
\end{lemma}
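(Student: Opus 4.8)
The plan is to unwind the definition of $\zeta_p(s,x)$ recalled above at the integer argument $s=j+1$. The one point that requires care is the following invariance: under the hypothesis $|x|_p\geqslant q_p$, one has $\omega(t+x)=\omega(x)$ for \emph{every} $t\in\mathbb{Z}_p$, so that $\langle t+x\rangle=(t+x)/\omega(x)$ on all of $\mathbb{Z}_p$.

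First I would prove this invariance. Set $v=v_p(x)$; the hypothesis forces $v\leqslant-1$ (and $v\leqslant-2$ when $p=2$), so $v_p(t)\geqslant 0>v$ and hence $v_p(t+x)=v$. Writing $t+x=p^{v}w$ and $x=p^{v}w_0$ with $w,w_0\in\mathbb{Z}_p^{\times}$, we get $w-w_0=p^{-v}t$, whence $v_p(w-w_0)\geqslant-v\geqslant v_p(q_p)$, i.e.\ $w\equiv w_0\pmod{q_p\mathbb{Z}_p}$. Two $p$-adic units congruent modulo $q_p$ have equal Techm\"uller representatives (their ratio lies in $1+q_p\mathbb{Z}_p$), so $\omega(w)=\omega(w_0)$, and therefore $\omega(t+x)=p^{v}\omega(w)=p^{v}\omega(w_0)=\omega(x)$.

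Next I would specialize the defining formula $\zeta_p(s,x)=\frac{1}{s-1}\int_{\mathbb{Z}_p}\langle t+x\rangle^{1-s}\,\mathrm{d}t$ to $s=j+1$; this is legitimate because $|j+1|_p\leqslant 1<q_pp^{-1/(p-1)}$ (the right-hand side exceeding $1$ for every prime $p$). At $s=j+1$ the exponent $1-s=-j$ is a negative integer, so for $z\in 1+q_p\mathbb{Z}_p$ the $p$-adic power $z^{-j}:=\exp(-j\log z)$ is nothing but $1/z^{j}$; combining this with the invariance above gives $\langle t+x\rangle^{-j}=\omega(x)^{j}(t+x)^{-j}$. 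The integrand $t\mapsto(t+x)^{-j}=x^{-j}(1+t/x)^{-j}$ is a power series convergent on $\mathbb{Z}_p$ (since $|t/x|_p\leqslant q_p^{-1}<1$), hence strictly differentiable, so its Volkenborn integral — the left-hand side of the claim — exists. Pulling the constant $\omega(x)^{j}$ out of the integral yields $\zeta_p(j+1,x)=\frac{\omega(x)^{j}}{j}\int_{\mathbb{Z}_p}(t+x)^{-j}\,\mathrm{d}t$, which is the asserted identity after rearranging.

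For the displayed special case I would simply note that $x=1/4$ satisfies $|1/4|_2=4=q_2$, and compute $\omega(1/4)=2^{v_2(1/4)}\,\omega\!\left(2^{-v_2(1/4)}\cdot\tfrac14\right)=2^{-2}\omega(1)=\tfrac14$, so $\omega(1/4)^{-j}=4^{j}$ and the general formula becomes $\int_{\mathbb{Z}_2}(t+\tfrac14)^{-j}\,\mathrm{d}t=j\,4^{j}\,\zeta_2(j+1,1/4)$. The main — and only mildly delicate — obstacle is the invariance $\omega(t+x)=\omega(x)$; everything else is routine unwinding of the definitions together with standard properties of $\exp$, $\log$, and Volkenborn integration.
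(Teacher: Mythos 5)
Your proof is correct and follows essentially the same route as the paper, which simply says the identity ``follows directly from the definition'' (citing Cohen, Prop.\ 11.2.6): you unwind $\zeta_p(s,x)=\frac{1}{s-1}\int_{\mathbb{Z}_p}\langle t+x\rangle^{1-s}\,\mathrm{d}t$ at $s=j+1$. The only substantive detail you add --- the invariance $\omega(t+x)=\omega(x)$ for $t\in\mathbb{Z}_p$ when $|x|_p\geqslant q_p$, plus the agreement of the $\exp$--$\log$ power with the ordinary $j$-th power on $1+q_p\mathbb{Z}_p$ --- is exactly what the paper leaves implicit, and you verify it correctly.
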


\begin{proof}
	It follows directly from the definition. See also \cite[Propsition 11.2.6]{Coh07}.
\end{proof}

\medskip

Let $\chi$ be a primitive character of conductor $f$. Let $M$ be a common multiple of $f$ and $q_p$. For $s \in \mathbb{C}_p\setminus\{1\}$ such that $|s|_p<q_pp^{-1/(p-1)}$, we define 
\[ L_p(s,\chi) := \frac{\langle M \rangle^{1-s}}{M}\sum_{a=0\atop p \nmid a}^{M-1} \chi(a)\zeta_p\left(s,\frac{a}{M}\right). \]
It can be shown that the above definition is independent of the choice of $M$. The \emph{$p$-adic $L$-function} $L_p(\cdot,\chi)$ is a $p$-adic analytic function on $|s|_p < q_pp^{-1/(p-1)}$, except when $\chi=\chi_0$ is the trivial character, in which case $L_p(\cdot,\chi_0)$ has a simple pole at $s=1$ with residue $1-1/p$. (See \cite[Propsition 11.3.9]{Coh07}.)

The functions $L_p(\cdot,\omega^{0}), L_p(\cdot,\omega^{1}),\ldots, L_p(\cdot,\omega^{p-2})$ are exactly the $p-1$ branches interpolating the values of
the Riemann zeta function at negative integers. In the current paper, we define $\zeta_p(s) := L_p(s,\omega^{1-s})$ for an integer $s \geqslant 2$.
\begin{lemma}\label{Lem_2_7}
	For any odd integer $j \geqslant 3$, we have
	\[ \zeta_p(j) = \frac{1}{q_p} \sum_{a=0 \atop p \nmid a}^{q_p-1} \omega(a)^{1-j} \zeta_p\left( j,\frac{a}{q_p} \right) = \frac{2}{q_p} \sum_{0<a<q_p/2} \omega(a)^{1-j} \zeta_p\left( j,\frac{a}{q_p} \right). \]
	In particular, for any odd integer $j \geqslant 3$ we have
	\[ \zeta_2(j) = \frac{1}{2}\zeta_2\left(j,\frac{1}{4}\right). \]
\end{lemma}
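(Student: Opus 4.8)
The plan is to unwind the definition $\zeta_p(j) = L_p(j, \omega^{1-j})$ by means of the explicit formula for $L_p(s,\chi)$ recorded above, with the particular choice $M = q_p$, and then to exploit the reflection formula \eqref{eqn_reflection_formula} to fold the resulting sum in half. Since $j$ is odd, $1-j$ is even, and the primitive character underlying $\omega^{1-j}$ has conductor equal to $q_p$ (or to $1$, in the case where $\omega^{1-j}$ is trivial), so in every case $M = q_p$ is a legitimate common multiple of that conductor and $q_p$; moreover $|j|_p \leqslant 1 < q_p p^{-1/(p-1)}$, so $s = j$ lies in the domain of the formula. Writing $\chi(a) = \omega(a)^{1-j}$ for $p \nmid a$ (which holds also in the trivial case), we obtain
\[ \zeta_p(j) = \frac{\langle q_p \rangle^{1-j}}{q_p} \sum_{a=0 \atop p \nmid a}^{q_p-1} \omega(a)^{1-j}\, \zeta_p\!\left(j, \frac{a}{q_p}\right), \]
and each $\zeta_p(j, a/q_p)$ is well defined because $p \nmid a$ forces $|a/q_p|_p = p^{v_p(q_p)} = q_p \geqslant q_p$.

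Next I would simplify the normalizing factor. Since $v_p(q_p) = 1$ for odd $p$ and $v_2(4) = 2$, in either case $\omega(q_p) = p^{v_p(q_p)}\omega(1) = q_p$, hence $\langle q_p \rangle = q_p/\omega(q_p) = 1$ and $\langle q_p \rangle^{1-j} = 1$. This gives the first displayed equality of the lemma. For the second, I would use the involution $a \mapsto q_p - a$: it preserves the condition $p \nmid a$ and has no fixed point on the range of summation (for odd $p$ because $q_p$ is odd; for $p = 2$ because the candidate $a = 2$ is excluded by $2 \nmid a$). The summand is invariant under this involution: on one hand $\zeta_p(j, (q_p-a)/q_p) = \zeta_p(j, 1 - a/q_p) = \zeta_p(j, a/q_p)$ by \eqref{eqn_reflection_formula}; on the other hand $\omega(q_p-a)^{1-j} = \omega(-1)^{1-j}\omega(a)^{1-j} = \omega(a)^{1-j}$, using $\omega(-1) = -1$ together with the fact that $1-j$ is even. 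Hence the sum over all admissible $a$ equals twice the sum over $0 < a < q_p/2$, which is the second equality. Specializing to $p = 2$, where $q_2 = 4$, the only term with $0 < a < 2$ and $2 \nmid a$ is $a = 1$, with $\omega(1)^{1-j} = 1$; thus $\zeta_2(j) = \tfrac{2}{4}\,\zeta_2(j, 1/4) = \tfrac{1}{2}\,\zeta_2(j, 1/4)$.

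There is no serious obstacle here: the proof is a direct computation. The only places demanding a little care are the evaluations of the Teichmüller character at the relevant arguments — namely $\omega(q_p) = q_p$, which trivializes the prefactor, and $\omega(-1) = -1$, which, combined with the parity of $1-j$, makes the summand symmetric — and the verification that every Hurwitz zeta value that occurs has its argument in the admissible region $|x|_p \geqslant q_p$.
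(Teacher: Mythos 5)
Your proof is correct and follows exactly the route the paper intends: the paper's proof is the one-line remark that the lemma ``follows directly from the definition and the reflection formula \eqref{eqn_reflection_formula},'' and you have simply carried out that computation in detail (choosing $M=q_p$, noting $\langle q_p\rangle=1$, checking $\omega(-1)^{1-j}=1$ and that the folding involution $a\mapsto q_p-a$ has no fixed term, and verifying the domain conditions). No gaps; your write-up is a fully detailed version of the same argument.
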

\begin{proof}
	It follows directly from the definition and the reflection formula \eqref{eqn_reflection_formula}.
\end{proof}

\section{Rational functions and linear forms}

For a positive integer $k$, we denote the Pochhammer symbol by
\[(\alpha)_k:=\alpha(\alpha+1)(\alpha+2)\cdots(\alpha+k-1).\]
For $k=0$, we define $(\alpha)_0 := 1$.

\begin{definition}
	Fix a nonnegative integer $s$ and fix a choice of $\delta \in \{0,1\}$. For every positive integer $n$ we define the following two rational functions.
	\begin{align}
		A_n(t) &:= 2^{(6s+12)n} \cdot (4t+2n)^{\delta} \cdot \frac{\left(t+\frac{1}{4}\right)_{n}^{s+2}\left(t+\frac{3}{4}\right)_{n}^{s+2}}{(t)_{n+1}^{2s+4}}, \label{def_A}\\
		B_n(t) &:= 2^{(3s+6)n} \cdot\frac{\left(t+\frac{3}{4}\right)_{n}^{s+2}}{(t)_{n+1}^{s+2}}. \label{def_B}
	\end{align}
For a rational function $R(t) = P(t)/Q(t)$, we define its degree by $\deg R := \deg P - \deg Q$. Both $A_n(t)$ and $B_n(t)$ have degree $\leqslant -2$. We define their partial fraction decompositions by
\begin{align}
	A_n(t) &=: \sum_{i=1}^{2s+4}\sum_{k=0}^{n} \frac{a_{n,i,k}}{(t+k)^i}, \label{def_a}\\
	B_n(t) &=: \sum_{i=1}^{s+2}\sum_{k=0}^{n}\frac{b_{n,i,k}}{(t+k)^i}. \label{def_b}
\end{align}
\end{definition}
\medskip

For a function $f(t)$, we denote by $f^{(s)}(t)$ the $s$-th order derivative of $f(t)$. The Volkenborn integrals of $A_n^{(s)}(t+1/4)$ (respectively, $B_n^{(s)}(t+1/4)$) ($n=1,2,\ldots$) produce good linear forms to prove Theorem \ref{thm2} (respectively, Theorem \ref{thm3}).

\begin{definition}
	For every positive integer $n$ we define
	\begin{align}
		S_n := \int_{\mathbb{Z}_2} A_n^{(s)}\left(t+\frac{1}{4}\right) \mathrm{d}t ~\in \mathbb{Q}_2, \label{def_S}\\
		T_n := \int_{\mathbb{Z}_2} B_n^{(s)}\left(t+\frac{1}{4}\right) \mathrm{d}t ~\in \mathbb{Q}_2. \label{def_T}
	\end{align}
\end{definition}
\medskip

\begin{lemma}\label{Lin}
	For every positive integer $n$ we have
	\begin{align}
		S_n &= \rho_{n,0} + \sum_{2 \leqslant i \leqslant 2s+4 \atop i \equiv \delta \pmod{2}} \rho_{n,i} ~ \zeta_2\left( i+s+1, \frac{1}{4} \right), \label{eqn_S_rho}\\
		T_n &= \sigma_{n,0} + \sum_{i=2}^{s+2} \sigma_{n,i} ~ \zeta_2\left( i+s+1, \frac{1}{4} \right), \label{eqn_T_sigma}
	\end{align}
where
\begin{align}
	\rho_{n,0} &= (-1)^{s+1} \sum_{i=1}^{2s+4}\sum_{k=1}^{n}\sum_{\ell=0}^{k-1} \frac{(i)_{s+1}a_{n,i,k}}{\left(\ell+\frac{1}{4}\right)^{i+s+1}} ~\in\mathbb{Q}, \label{def_rho_0}\\
	\rho_{n,i} &= (-1)^{s}(i)_{s+1}4^{i+s} \sum_{k=0}^{n} a_{n,i,k} ~\in\mathbb{Q}, \quad (1 \leqslant i \leqslant 2s+4) \label{def_rho_i}\\
	\sigma_{n,0} &= (-1)^{s+1} \sum_{i=1}^{s+2}\sum_{k=1}^{n}\sum_{\ell=0}^{k-1} \frac{(i)_{s+1}b_{n,i,k}}{\left(\ell+\frac{1}{4}\right)^{i+s+1}} ~\in\mathbb{Q}, \label{def_sigma_0}\\
	\sigma_{n,i} &= (-1)^{s}(i)_{s+1}4^{i+s} \sum_{k=0}^{n} b_{n,i,k} ~\in\mathbb{Q}. \quad (1 \leqslant i \leqslant s+2) \label{def_sigma_i}
\end{align}
\end{lemma}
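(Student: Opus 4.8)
The plan is to compute the Volkenborn integral of $A_n^{(s)}(t+1/4)$ directly from the partial fraction decomposition \eqref{def_a}. First I would differentiate term by term: since
\[
\frac{\mathrm{d}^s}{\mathrm{d}t^s}\,\frac{1}{(t+k)^i} = (-1)^s (i)_s\,\frac{1}{(t+k)^{i+s}}
\]
(with the convention $(i)_s = i(i+1)\cdots(i+s-1)$, so the coefficient that actually appears after one more bookkeeping shift is $(-1)^s(i)_{s+1}/(\cdot)$ once we pass through Lemma \ref{Lem_2_6}), substituting $t\mapsto t+1/4$ gives
\[
A_n^{(s)}\!\left(t+\tfrac14\right) = (-1)^s \sum_{i=1}^{2s+4}\sum_{k=0}^{n} \frac{(i)_s\,a_{n,i,k}}{\left(t+k+\frac14\right)^{i+s}}.
\]
Then I would split the double sum into the $k=0$ part and the $k\geqslant 1$ part, because the Volkenborn integral treats these differently: for $k=0$ we may integrate directly, whereas for $k\geqslant 1$ we must first apply the translation formula, Lemma \ref{Lem_translation_formula}, to reduce $\int_{\mathbb{Z}_2}(t+k+1/4)^{-(i+s)}\,\mathrm{d}t$ to $\int_{\mathbb{Z}_2}(t+1/4)^{-(i+s)}\,\mathrm{d}t$ plus a finite sum of correction terms $\sum_{\ell=0}^{k-1} \frac{\mathrm{d}}{\mathrm{d}t}(t+1/4)^{-(i+s)}\big|_{t=\ell}$.

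Next I would evaluate the two resulting ingredients. The "main" integral $\int_{\mathbb{Z}_2}(t+1/4)^{-(i+s)}\,\mathrm{d}t$ is handled by Lemma \ref{Lem_2_6}: it equals $(i+s)\,4^{i+s}\,\zeta_2(i+s+1,1/4)$. Collecting the coefficient of $\zeta_2(i+s+1,1/4)$ over all $k$ from $0$ to $n$ yields
\[
(-1)^s (i)_s (i+s) 4^{i+s}\sum_{k=0}^n a_{n,i,k} = (-1)^s (i)_{s+1} 4^{i+s}\sum_{k=0}^n a_{n,i,k} = \rho_{n,i},
\]
using $(i)_s(i+s)=(i)_{s+1}$, which matches \eqref{def_rho_i}. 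The correction terms from the translation formula contribute
\[
(-1)^s \sum_{i=1}^{2s+4}\sum_{k=1}^{n}\sum_{\ell=0}^{k-1} (i)_s\, a_{n,i,k}\cdot\frac{\mathrm{d}}{\mathrm{d}t}\Big(t+\tfrac14\Big)^{-(i+s)}\Big|_{t=\ell} = (-1)^{s+1}\sum_{i=1}^{2s+4}\sum_{k=1}^{n}\sum_{\ell=0}^{k-1}\frac{(i)_{s+1}\,a_{n,i,k}}{\left(\ell+\frac14\right)^{i+s+1}},
\]
which is exactly $\rho_{n,0}$ in \eqref{def_rho_0}; this term, being a finite sum of values of an explicit rational function at rational points, is manifestly rational, so $\rho_{n,0}\in\mathbb{Q}$ and likewise $\rho_{n,i}\in\mathbb{Q}$. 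Finally, to obtain the index restriction $i\equiv\delta\pmod 2$ in \eqref{eqn_S_rho} I would invoke the symmetry of $A_n$: because $A_n(t)$ built from $(t+1/4)_n$, $(t+3/4)_n$ and $(t)_{n+1}$ together with the factor $(4t+2n)^\delta$ satisfies a functional equation under $t\mapsto -t-n$ (the Pochhammer blocks get permuted and $(4t+2n)^\delta$ picks up $(-1)^\delta$), the partial fraction coefficients satisfy $a_{n,i,k}=(-1)^{i+\delta}a_{n,i,n-k}$; summing over $k$ then forces $\sum_k a_{n,i,k}=0$ whenever $i\not\equiv\delta\pmod 2$, killing those $\rho_{n,i}$. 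The argument for $T_n$ and the $\sigma$'s is identical but with \eqref{def_b} in place of \eqref{def_a} and no parity constraint (since $B_n$ has no symmetry factor forcing vanishing).

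The routine parts are the term-by-term differentiation and the application of Lemmas \ref{Lem_translation_formula} and \ref{Lem_2_6}; I expect the only point requiring genuine care is verifying the symmetry relation $a_{n,i,k}=(-1)^{i+\delta}a_{n,i,n-k}$ that produces the parity restriction — one must check that the substitution $t\mapsto -t-n$ maps $A_n(t)$ to $(-1)^\delta A_n(t)$ exactly (tracking signs from each of the $2s+4$ Pochhammer factors and from $(4t+2n)^\delta$), and that this functional equation transfers correctly to the partial fraction coefficients. Everything else is bookkeeping: reindexing the triple sum, confirming $(i)_s(i+s)=(i)_{s+1}$, and noting that all quantities involved lie in $\mathbb{Q}$ because the $a_{n,i,k}$ are rational (the rational function $A_n$ has rational coefficients) and the remaining operations are finite $\mathbb{Q}$-linear combinations.
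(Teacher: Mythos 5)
Your computation follows the paper's own route almost exactly: differentiate the partial fraction decomposition \eqref{def_a}, use Lemma \ref{Lem_translation_formula} to shift each term back to $\int_{\mathbb{Z}_2}(t+\frac14)^{-(i+s)}\,\mathrm{d}t$, evaluate that by Lemma \ref{Lem_2_6}, collect coefficients using $(i)_s(i+s)=(i)_{s+1}$, and kill the terms with $i\not\equiv\delta\pmod 2$ via the symmetry $A_n(-t-n)=(-1)^{\delta}A_n(t)$, which gives $(-1)^i a_{n,i,n-k}=(-1)^{\delta}a_{n,i,k}$. All of that is correct and matches the paper.

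However, there is a genuine (if small) gap: you never show that the $i=1$ term is absent, and the symmetry argument cannot do this. The stated formulas have the sums starting at $i=2$; when $\delta=1$ the index $i=1$ satisfies $i\equiv\delta\pmod 2$, so your parity argument does not eliminate $\rho_{n,1}\,\zeta_2(s+2,\frac14)$, and for $T_n$ there is no parity argument at all, so nothing in your proposal removes $\sigma_{n,1}\,\zeta_2(s+2,\frac14)$. The paper closes this with a degree argument: since $\deg A_n\leqslant -2$ and $\deg B_n\leqslant -2$, the sum of residues vanishes, i.e.
\[
\rho_{n,1}=(-1)^{s}(s+1)!\,4^{s+1}\sum_{k=0}^{n}a_{n,1,k}=(-1)^{s}(s+1)!\,4^{s+1}\lim_{t\to\infty}tA_n(t)=0,
\]
and likewise $\sigma_{n,1}=0$. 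You need to add this step (or an equivalent one) for the lemma as stated; otherwise your argument only proves the identities with the sums running over $1\leqslant i\leqslant 2s+4$, $i\equiv\delta\pmod 2$ (resp. $1\leqslant i\leqslant s+2$), which is weaker than \eqref{eqn_S_rho} and \eqref{eqn_T_sigma}.
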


\begin{proof}
By \eqref{def_a}, we have 
\[ A_n^{(s)}\left(t+\frac{1}{4}\right) = (-1)^{s} \sum_{i=1}^{2s+4}\sum_{k=0}^{n} \frac{(i)_{s}a_{n,i,k}}{\left(t+k+\frac{1}{4}\right)^{i+s}}. \]	
Therefore, 
\begin{align*}
	S_n &= (-1)^{s} \sum_{i=1}^{2s+4}\sum_{k=0}^{n} (i)_{s}a_{n,i,k} \int_{\mathbb{Z}_2}  \frac{\mathrm{d}t}{\left(t+k+\frac{1}{4}\right)^{i+s}} \\
	&\overset{\text{Lemma \ref{Lem_translation_formula}}}{=} (-1)^{s} \sum_{i=1}^{2s+4}\sum_{k=0}^{n} (i)_{s}a_{n,i,k} \left( \int_{\mathbb{Z}_2}  \frac{\mathrm{d}t}{\left(t+\frac{1}{4}\right)^{i+s}} -(i+s)\sum_{\ell=0}^{k-1} \frac{1}{\left(\ell+\frac{1}{4}\right)^{i+s+1}} \right) \\
	&\overset{\text{Lemma \ref{Lem_2_6}}}{=} (-1)^{s} \sum_{i=1}^{2s+4}\sum_{k=0}^{n} (i)_{s}a_{n,i,k} \left( (i+s)4^{i+s} \zeta_2\left(i+s+1,\frac{1}{4}\right) -(i+s)\sum_{\ell=0}^{k-1} \frac{1}{\left(\ell+\frac{1}{4}\right)^{i+s+1}} \right) \\
	&= \rho_{n,0} + \sum_{i=1}^{2s+4} \rho_{n,i} ~ \zeta_2\left( i+s+1, \frac{1}{4} \right).
\end{align*}
Here when $k=0$, the sum $\sum_{\ell=0}^{k-1}$ is understood as $0$.
Since $\deg A_n(t) \leqslant -2$, we have 
\[ \rho_{n,1} = (-1)^{s}(s+1)!4^{s+1} \cdot \lim_{t \rightarrow \infty} tA_n(t) = 0 .\]
The rational function $A_n(t)$ has the symmetry
\[ A_n(-t-n) = (-1)^{\delta} A_n(t), \]
which implies $(-1)^{i}a_{n,i,n-k} = (-1)^{\delta}a_{n,i,k}$ for all $i,k$. So $\rho_{n,i} = 0$ when $i>0$ and $i \not\equiv \delta \pmod{2}$. In conclusion, we have
\[ S_n = \rho_{n,0} + \sum_{i=1}^{2s+4} \rho_{n,i} ~ \zeta_2\left( i+s+1, \frac{1}{4} \right) = \rho_{n,0} + \sum_{2 \leqslant i \leqslant 2s+4 \atop i \equiv \delta \pmod{2}} \rho_{n,i} ~ \zeta_2\left( i+s+1, \frac{1}{4} \right). \]
This completes the proof of \eqref{eqn_S_rho}. The proof of \eqref{eqn_T_sigma} is similar and simpler: we have $\sigma_{n,1}=0$ since $\deg B_n(t) \leqslant -2$, and we do not need to consider the parity.
\end{proof}

\section{Arithmetic properties of the coefficients}

As usual, we denote by 
\[ d_n := \operatorname{lcm}[1,2,\ldots,n] \]
the least common multiple of the smallest $n$ positive integers.

\begin{lemma}\label{Lem_4_1}
	Let 
	\begin{align*}
		F_{1/4}(t) &:= \frac{2^{3n}}{n!} \left( t+\frac{1}{4} \right)_n, \\
		F_{3/4}(t) &:= \frac{2^{3n}}{n!} \left( t+\frac{3}{4} \right)_n, \\
		G(t)  &:= \frac{n!}{(t)_{n+1}}.
	\end{align*}
Then for any integer $k \in [0,n]$ and any nonnegative integer $\ell$, we have
\begin{align}
	&d_n^{\ell} \cdot \frac{1}{\ell !}F_{1/4}^{(\ell)}(t) \big|_{t = -k} \in \mathbb{Z}, \label{eqn_4_1}\\
	&d_n^{\ell} \cdot \frac{1}{\ell !}F_{3/4}^{(\ell)}(t) \big|_{t = -k} \in \mathbb{Z}, \label{eqn_4_2}\\
	&d_n^{\ell} \cdot \frac{1}{\ell !} \left( (t+k)G(t) \right)^{(\ell)} \big|_{t =-k} \in \mathbb{Z}. \label{eqn_4_3}
\end{align}
\end{lemma}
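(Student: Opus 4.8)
The plan is to deduce all three inclusions from one standard fact about integer-valued polynomials together with an elementary count of $p$-adic valuations. The standard fact is: if $P\in\mathbb{Q}[X]$ has $\deg P\le N$ and $P(\mathbb{Z})\subseteq\mathbb{Z}$, then $d_N^{\ell}\cdot\frac{1}{\ell!}P^{(\ell)}(m)\in\mathbb{Z}$ for all $m\in\mathbb{Z}$ and all $\ell\ge0$. I would prove this by writing $P=\sum_{j=0}^{N}c_j\binom{X}{j}$ with $c_j\in\mathbb{Z}$ (P\'olya), noting that $\frac{1}{\ell!}P^{(\ell)}(m)=[Y^{\ell}]\,P(m+Y)$, expanding $\binom{m+Y}{j}=\sum_{i}\binom{m}{j-i}\binom{Y}{i}$, and thereby reducing to $d_N^{\ell}\,[Y^{\ell}]\binom{Y}{i}\in\mathbb{Z}$ for $0\le i\le N$. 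Since $\binom{Y}{i}=\frac{1}{i!}\prod_{r=0}^{i-1}(Y-r)$, the coefficient $[Y^{\ell}]\binom{Y}{i}$ equals, up to sign, $\frac{1}{i}$ times an elementary symmetric function of $1,\tfrac12,\dots,\tfrac{1}{i-1}$ of degree $\ell-1$; as every integer $1\le r\le i$ divides $d_i\mid d_N$, this lies in $d_N^{-\ell}\mathbb{Z}$.

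For \eqref{eqn_4_1} and \eqref{eqn_4_2}, I would first rewrite $F_{1/4}(t)=2^{3n}\binom{t+n-1+\frac14}{n}$ and $F_{3/4}(t)=2^{3n}\binom{t+n-1+\frac34}{n}$, using $(t+\tfrac14)_n=n!\binom{t+n-1+\frac14}{n}$. Hence it suffices to show that $Q(Y):=2^{3n}\binom{Y+\frac14}{n}$ (and its analogue with $\tfrac34$) is a polynomial of degree $\le n$ that is integer-valued on $\mathbb{Z}$, for then the standard fact applied at the integer point $Y=n-1-k$ yields the claim. Since $Q(m)=\frac{2^{n}}{n!}\prod_{j=0}^{n-1}(4m+1-4j)$, integer-valuedness amounts to $n!\mid 2^{n}\prod_{j=0}^{n-1}(4m+1-4j)$ for every $m\in\mathbb{Z}$. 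For $p=2$ this holds because $v_2(n!)=n-s_2(n)<n$. For odd $p$, the numbers $4m+1-4j$ ($0\le j\le n-1$) are $n$ consecutive terms of an arithmetic progression with common difference prime to $p$, so for each $r\ge1$ at least $\lfloor n/p^{r}\rfloor$ of them are divisible by $p^{r}$; summing over $r$ and using Legendre's formula gives $v_p\bigl(\prod_j(4m+1-4j)\bigr)\ge\sum_{r\ge1}\lfloor n/p^{r}\rfloor=v_p(n!)$.

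For \eqref{eqn_4_3}, the function $(t+k)G(t)=n!\big/\!\!\prod_{0\le j\le n,\,j\ne k}(t+j)$ is not a polynomial, so I would instead expand it by partial fractions. After the substitution $t=s-k$ it becomes $\binom{n}{k}\cdot\frac{k!}{\prod_{i=1}^{k}(s-i)}\cdot\frac{(n-k)!}{\prod_{i=1}^{n-k}(s+i)}$. For any $m\ge1$, partial fractions give $\frac{m!}{\prod_{i=1}^{m}(s+i)}=\sum_{i=1}^{m}\frac{(-1)^{i-1}i\binom{m}{i}}{s+i}$, whence $[s^{\ell}]\frac{m!}{\prod_{i=1}^{m}(s+i)}=(-1)^{\ell}\sum_{i=1}^{m}(-1)^{i-1}\binom{m}{i}i^{-\ell}\in d_m^{-\ell}\mathbb{Z}$, since $i\mid d_m$. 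Applying this with $m=n-k$, and with $m=k$ after $s\mapsto-s$, then multiplying the two resulting power series, multiplying by the integer $\binom{n}{k}$, and using $d_k\mid d_n$ and $d_{n-k}\mid d_n$, shows that $[s^{\ell}]\bigl((t+k)G(t)\bigr)=\frac{1}{\ell!}\bigl((t+k)G(t)\bigr)^{(\ell)}\big|_{t=-k}$ lies in $d_n^{-\ell}\mathbb{Z}$, which is \eqref{eqn_4_3}.

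The only step requiring genuine care is the divisibility $n!\mid 2^{n}\prod_j(4m+1-4j)$ that underlies the integer-valuedness of $F_{1/4}$ and $F_{3/4}$ (this is where the specific factor $2^{3n}$ and the shifts $\tfrac14,\tfrac34$ enter); everything else is routine bookkeeping with the two standard ingredients above, and I anticipate no serious obstacle.
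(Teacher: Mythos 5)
Your proof is correct, and it is genuinely self-contained where the paper is not: the paper disposes of this lemma in one line by citing known results (\cite[Proposition 3.2]{LY20} for \eqref{eqn_4_1}--\eqref{eqn_4_2} and \cite[Lemma 16]{Zud04} for \eqref{eqn_4_3}), so you are supplying proofs the author outsources. Your route for \eqref{eqn_4_1}--\eqref{eqn_4_2} — rewriting $F_{1/4}(t)=2^{3n}\binom{t+n-1+\frac14}{n}$, checking integer-valuedness via $v_2(n!)=n-s_2(n)\leqslant n-1$ and, for odd $p$, counting multiples of $p^r$ among $n$ consecutive terms of an arithmetic progression with difference prime to $p$ (Legendre), and then invoking the standard fact that $d_N^{\ell}\cdot\frac{1}{\ell!}P^{(\ell)}(m)\in\mathbb{Z}$ for integer-valued $P$ of degree $\leqslant N$ — is sound, and the binomial/elementary-symmetric-function verification of that standard fact is correct (each monomial of $e_{\ell-1}(1,\frac12,\dots,\frac1{i-1})$ clears denominators against $d_N^{\ell-1}$, and $i\mid d_N$). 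Your treatment of \eqref{eqn_4_3} via the two explicit partial-fraction expansions $\frac{m!}{\prod_{i=1}^{m}(s+i)}=\sum_{i=1}^{m}\frac{(-1)^{i-1}i\binom{m}{i}}{s+i}$ with $m=k$ and $m=n-k$, multiplied together with the integer $\binom{n}{k}$, is also correct, including the harmless sign from $s\mapsto -s$ and the degenerate cases $k\in\{0,n\}$. For comparison, the classical proof of \eqref{eqn_4_3} in Zudilin's paper is slightly slicker: from $G(t)=\sum_{l=0}^{n}\frac{(-1)^{l}\binom{n}{l}}{t+l}$ one gets $(t+k)G(t)=(-1)^{k}\binom{n}{k}+\sum_{l\neq k}(-1)^{l}\binom{n}{l}\left(1+\frac{k-l}{t+l}\right)$, and the $\ell$-th Taylor coefficient at $t=-k$ of each summand is $\pm\binom{n}{l}(l-k)^{-\ell}\in d_n^{-\ell}\mathbb{Z}$; this avoids multiplying two power series. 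But that is only a matter of economy — your argument is complete and proves exactly what the lemma asserts.
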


\begin{proof}
	These are well-known. For details, we refer the reader to \cite[Propsition 3.2]{LY20} for \eqref{eqn_4_1} and \eqref{eqn_4_2}; we refer the reader to \cite[Lemma 16]{Zud04} for \eqref{eqn_4_3}.
\end{proof}

\begin{lemma}\label{Lem_4_2}
	We have 
	\begin{align}
		&d_n^{2s+4-i} a_{n,i,k} \in \mathbb{Z}, \quad (1 \leqslant i \leqslant 2s+4,~0 \leqslant k \leqslant n) \label{weak_ari_a}\\
		&d_n^{s+2-i} b_{n,i,k} \in \mathbb{Z}. \quad (1 \leqslant i \leqslant s+2,~0 \leqslant k \leqslant n) \label{ari_b}
	\end{align}
\end{lemma}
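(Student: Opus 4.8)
The plan is to reduce the arithmetic statement about the partial-fraction coefficients $a_{n,i,k}$ (resp.\ $b_{n,i,k}$) to the integrality statements of Lemma \ref{Lem_4_1}, via the standard formula expressing a partial-fraction coefficient as a Taylor coefficient of a locally holomorphic function. First I would fix $k \in [0,n]$ and note that, by \eqref{def_a}, the coefficient $a_{n,i,k}$ is the coefficient of $(t+k)^{-i}$ in the Laurent expansion of $A_n(t)$ at $t=-k$; equivalently,
\[
a_{n,i,k} = \frac{1}{(2s+4-i)!}\left. \frac{\mathrm{d}^{2s+4-i}}{\mathrm{d}t^{2s+4-i}}\Bigl( (t+k)^{2s+4} A_n(t) \Bigr)\right|_{t=-k},
\]
using that $(t)_{n+1}^{2s+4}$ contributes exactly a pole of order $2s+4$ at $t=-k$ (for $0\le k\le n$), so $(t+k)^{2s+4}A_n(t)$ is holomorphic and nonvanishing-of-denominator there. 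The point is then to write $(t+k)^{2s+4}A_n(t)$ as a product of the building blocks $F_{1/4}$, $F_{3/4}$, $(t+k)G$, and the shifted copies $G(t+j)$ for $j\ne k$, together with harmless factors, and to apply a Leibniz-rule bookkeeping so that each of the $2s+4-i$ derivatives lands on one factor; each such differentiation costs at most one factor of $d_n$ by Lemma \ref{Lem_4_1} (for the three distinguished blocks) or by the elementary fact that $d_n^\ell \cdot \frac{1}{\ell!}G^{(\ell)}(t+j)|_{t=-k}\in\mathbb{Z}$ when $j\ne k$ (here $t=-k$ is not a pole of $G(t+j)$, and one still has the $\operatorname{lcm}$-denominator bound coming from $(t+j)_{n+1}$ having simple zeros at integers).

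More precisely, I would organize the computation as follows. Write
\[
(t+k)^{2s+4} A_n(t) = 2^{(6s+12)n}(4t+2n)^{\delta}\,\frac{\bigl(t+\tfrac14\bigr)_n^{s+2}\bigl(t+\tfrac34\bigr)_n^{s+2}}{\prod_{j=0}^{n}(t+j)^{2s+4}}\cdot (t+k)^{2s+4},
\]
and absorb the powers of $2$: $2^{(6s+12)n} = (2^{3n})^{2s+4}$, so each copy of $(t+\tfrac14)_n$ pairs with a $2^{3n}/n!$ and an $n!$ to form an $F_{1/4}$, each copy of $(t+j)$ in the denominator pairs with an $n!$ to help form a $G(t+j)$, etc.; one checks the factorials balance because there are $2(s+2)$ numerator Pochhammer blocks and $(n+1)(2s+4)$ linear denominator factors, matched against the $2^{3n}/n!$'s and $n!$'s introduced. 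The factor $(4t+2n)^\delta$ is a polynomial with integer coefficients whose derivatives are integers, so it only improves things. Applying the multinomial Leibniz rule to the resulting product of $(2s+4)$-many $F$'s and $(2s+4)$-many $G$-type factors, distributing the $2s+4-i$ derivatives among them, and invoking Lemma \ref{Lem_4_1} together with the $j\ne k$ analogue, every term carries a denominator dividing $d_n^{2s+4-i}$; summing, $d_n^{2s+4-i} a_{n,i,k}\in\mathbb{Z}$. The argument for $b_{n,i,k}$ is identical with $2s+4$ replaced by $s+2$, one numerator Pochhammer block $(t+\tfrac34)_n$, and no $(4t+2n)^\delta$ factor.

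The main obstacle I anticipate is the bookkeeping that the factorials, the powers of $2$, and the count of factors all match up so that the Leibniz expansion really is a product of precisely the blocks appearing in Lemma \ref{Lem_4_1} (and their translates) with no leftover non-integral constant — in particular verifying that $(t+k)^{2s+4}/\prod_{j=0}^n (t+j)^{2s+4}$, after pairing with the numerator Pochhammers, decomposes as $F_{1/4}^{s+2} F_{3/4}^{s+2}\cdot\bigl((t+k)G(t)\bigr)^{2s+4}\cdot\prod_{j\ne k} G(t+j)^{2s+4}$ up to a unit, which requires matching $2(s+2)\cdot n + (2s+4) = (2s+4)(n+1)$ correctly and checking the sign/constant is $\pm 1$. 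Once that identity is in place, the integrality is a routine consequence of Lemma \ref{Lem_4_1} and the multinomial theorem; I would also remark that the simple extra observation needed — that $d_n^\ell\cdot\frac{1}{\ell!}$ times a derivative of $G(t+j)$ evaluated away from its pole is integral — follows from the same source as \eqref{eqn_4_3} (it is \cite[Lemma 16]{Zud04} applied with a shifted argument), so no new input is required.
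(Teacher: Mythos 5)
Your overall strategy is the paper's: you use $a_{n,i,k}=\frac{1}{(2s+4-i)!}\bigl((t+k)^{2s+4}A_n(t)\bigr)^{(2s+4-i)}\big|_{t=-k}$, factor $(t+k)^{2s+4}A_n(t)$ into the blocks of Lemma \ref{Lem_4_1}, and distribute the derivatives by the Leibniz rule. But the factorization you actually display is wrong. The correct identity is simply
\[
(t+k)^{2s+4}A_n(t)=(4t+2n)^{\delta}\,F_{1/4}(t)^{s+2}\,F_{3/4}(t)^{s+2}\,\bigl((t+k)G(t)\bigr)^{2s+4},
\]
with constant exactly $1$: the powers of two combine as $(2^{3n})^{2s+4}=2^{(6s+12)n}$ and the factorials cancel ($n!^{-(s+2)}\cdot n!^{-(s+2)}\cdot n!^{2s+4}=1$). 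Each of the $2s+4$ copies of the full denominator $(t)_{n+1}$ is absorbed into one block $(t+k)G(t)$, which is regular at $t=-k$, and \eqref{eqn_4_3} handles every such block; no shifted factors are needed. Your claimed decomposition $F_{1/4}^{s+2}F_{3/4}^{s+2}\bigl((t+k)G(t)\bigr)^{2s+4}\prod_{j\ne k}G(t+j)^{2s+4}$ is not equal to $(t+k)^{2s+4}A_n(t)$ up to a unit: the extra factors $\prod_{j\ne k}G(t+j)^{2s+4}$ over-count the denominator (the left-hand side has denominator $(t)_{n+1}^{2s+4}$ only), so that step as written would fail. Moreover, your auxiliary ``elementary fact'' is false: for $0\le j<k$ the point $t=-k$ \emph{is} a pole of $G(t+j)=n!/(t+j)_{n+1}$, and even when $j>k$ the value $G(t+j)\big|_{t=-k}=n!\,(j-k-1)!/(j-k+n)!$ is a reciprocal of an integer rather than an integer, so $d_n^{\ell}\cdot\frac{1}{\ell!}G^{(\ell)}(t+j)\big|_{t=-k}\in\mathbb{Z}$ does not hold in general. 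Fortunately none of this is needed.

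Once you drop the shifted $G(t+j)$'s and use the correct (simpler) factorization above, the rest of your argument — multinomial Leibniz expansion of the $2s+4-i$ derivatives over the blocks, each derivative costing at most one factor $d_n$ by Lemma \ref{Lem_4_1}, the polynomial $(4t+2n)^{\delta}$ contributing only integers — is exactly the paper's proof, and the same simplification (one Pochhammer block, exponent $s+2$, no $\delta$-factor) gives \eqref{ari_b}.
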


\begin{proof}
	We only prove \eqref{weak_ari_a}, the proof of \eqref{ari_b} is similar and simpler. By \eqref{def_a} we have
	\begin{equation}\label{eqn_a=deri}
	 a_{n,i,k} = \frac{1}{(2s+4-i)!} \left( (t+k)^{2s+4}A_n(t) \right)^{(2s+4-i)} \big|_{t=-k}. 
	 \end{equation}
	Keep the notations in Lemma \ref{Lem_4_1}. Let $H(t)=(4t+2n)^{\delta}$. Obviously, we have
	\[ \frac{1}{\ell !} H^{(\ell)}(t) \big|_{t=-k} \in \mathbb{Z} \]
for any integer $k\in[0,n]$ and any nonnegative integer $\ell$. 	
	
	Note that 
	\[ (t+k)^{2s+4}A_n(t) = H(t) \cdot F_{1/4}(t)^{s+2} \cdot F_{3/4}(t)^{s+2} \cdot \left((t+k)G(t)\right)^{2s+4}. \]
	Applying the Leibniz rule and Lemma \ref{Lem_4_1}, we have
	\begin{align*}
		&d_n^{2s+4-i} a_{n,i,k} \\
		=&  \sum ~~ \frac{d_n^{h}H^{(h)}(t)}{h!} \cdot \prod_{u=1}^{s+2} \frac{d_n^{f_{u}}F_{1/4}^{(f_{u})}(t)}{f_{u}!} \cdot \prod_{u=1}^{s+2} \frac{d_n^{f_{u}^{\prime}}F_{3/4}^{(f_{u}^{\prime})}(t)}{f_{u}^{\prime}!} \cdot \prod_{u=1}^{2s+4} \frac{d_n^{g_u}\left((t+k)G(t)\right)^{(g_u)}}{g_u !} \big|_{t=-k} \in \mathbb{Z},		
	\end{align*} 
where the sum is taken over all tuples
\[ (h,f_{1},\ldots,f_{s+2},f_{1}^{\prime},\ldots,f_{s+2}^{\prime},g_{1},\ldots,g_{2s+4}) \]
of nonnegative integers such that the sum of entries equals $2s+4-i$. 
\end{proof}

We refine \eqref{weak_ari_a} in the following lemma. This is necessary for the purpose of proving Theorem \ref{thm2}.

\begin{lemma}\label{Lem_4_3}
	For any positive integer $n$ we define 
	\begin{equation}\label{def_Phi}
	\Phi_n := \prod_{\sqrt{10n}<q\leqslant n \atop \left\{ \frac{n}{q} \right\} > \frac{1}{2}} q .
	\end{equation}
Here $\left\{  n/q \right\}$ is the fractional part of $n/q$. When $n > (2s+4)^2$ and $n$ is odd, we have
\begin{equation}\label{strong_ari_a}
	\Phi_n^{-s-2} d_n^{2s+4-i} a_{n,i,k} \in \mathbb{Z}. \quad (1 \leqslant i \leqslant 2s+4,~0 \leqslant k \leqslant n)
\end{equation}
\end{lemma}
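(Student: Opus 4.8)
The plan is to upgrade the bound $d_n^{2s+4-i}a_{n,i,k}\in\mathbb Z$ from Lemma \ref{Lem_4_2} by showing that, for each prime $q$ appearing in the product $\Phi_n$, the $q$-adic valuation of $d_n^{2s+4-i}a_{n,i,k}$ is in fact at least $s+2$. Since such a prime $q$ satisfies $\sqrt{10n}<q\le n$, it divides $d_n$ exactly once, and $q>\sqrt{10n}>2s+4$ for $n>(2s+4)^2$, so $q$ cannot divide any of the ``small'' factors $(i)_s$, the factorials $\ell!$ in the Leibniz expansion, or the integer $2s+4$; all the $q$-adic content must come from the Pochhammer symbols. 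Concretely, I would return to the factorization
\[
(t+k)^{2s+4}A_n(t)=H(t)\cdot F_{1/4}(t)^{s+2}\cdot F_{3/4}(t)^{s+2}\cdot\bigl((t+k)G(t)\bigr)^{2s+4}
\]
from the proof of Lemma \ref{Lem_4_2}, with $H(t)=(4t+2n)^\delta$, and apply the Leibniz rule. Each term of the expansion of $\tfrac{1}{(2s+4-i)!}\bigl((t+k)^{2s+4}A_n(t)\bigr)^{(2s+4-i)}$ at $t=-k$, after multiplying by $d_n^{2s+4-i}$, is a product of $d_n^{\ell}\cdot\tfrac1{\ell!}(\cdots)^{(\ell)}|_{t=-k}$-type factors, all integers by Lemma \ref{Lem_4_1}. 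The point is to show each such product is divisible by $q^{s+2}$.

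The heart of the matter is the local analysis at a fixed prime $q\in\Phi_n$. Because $q\mid d_n$ exactly once, and the exponents in the three ``large'' blocks are $s+2$, $s+2$, and $2s+4=2(s+2)$, it suffices to prove the single-block statement: for any $k\in[0,n]$ and any nonnegative integer $\ell$,
\[
v_q\!\left(d_n^{\ell}\cdot\tfrac1{\ell!}F_{1/4}^{(\ell)}(t)\big|_{t=-k}\right)\ge 1,\qquad
v_q\!\left(d_n^{\ell}\cdot\tfrac1{\ell!}F_{3/4}^{(\ell)}(t)\big|_{t=-k}\right)\ge 1,
\]
and similarly $v_q\bigl(d_n^{\ell}\cdot\tfrac1{\ell!}((t+k)G(t))^{(\ell)}|_{t=-k}\bigr)\ge 1$ — provided $\ell$ is not too large; more precisely, one needs the total derivative order $2s+4-i$ distributed among $4s+10$ slots, so by pigeonhole one cannot force all of the $s+2+s+2+2(s+2)$ relevant factors to be the ``bad'' ones simultaneously unless $i$ is large, and the remaining factors supply the valuation. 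The single-block divisibility should follow from the defining property of $\Phi_n$: writing $F_{1/4}(t)=\frac{2^{3n}}{n!}\prod_{j=0}^{n-1}(t+\tfrac14+j)$, evaluating a normalized derivative at $t=-k$ picks out, up to sign and a factor coprime to $q$, products and ratios of the quantities $\prod (j-k+\tfrac14)$ over index subsets; the condition $\{n/q\}>\tfrac12$ together with $q>\sqrt{10n}$ is exactly what guarantees that $q$ divides the numerator Pochhammer product $(t+\tfrac14)_n$ (or its relevant sub-products) to order at least one more than the factorial $n!$ in the denominator — this is the standard ``Chebyshev/fractional part'' trick used for the arithmetic of Apéry-type numbers, adapted to the arithmetic-progression Pochhammer $(t+a/4)_n$.

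I expect the main obstacle to be the bookkeeping in the Leibniz expansion: one must check that for \emph{every} admissible tuple $(h,f_1,\dots,f_{s+2},f_1',\dots,f_{s+2}',g_1,\dots,g_{2s+4})$ summing to $2s+4-i$, the number of blocks for which one genuinely gains a factor of $q$ is at least $s+2$, and this requires combining (i) the pigeonhole count — at most $2s+4-i\le 2s+3$ of the $4s+10$ slots carry a positive derivative, but even slots with $\ell=0$ contribute $q$-valuation $\ge 1$ when the corresponding evaluated Pochhammer is divisible by $q$ — with (ii) a uniform lower bound $v_q\bigl(F_{\bullet}^{(\ell)}(-k)/\ell!\bigr)\ge 1$ valid for \emph{all} $\ell$ in the relevant range, not merely $\ell=0$. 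Establishing (ii) for positive $\ell$ is the delicate part: differentiating a product of $n$ linear terms and evaluating introduces sums of reciprocals $\sum 1/(j-k+\tfrac14)$, so one must show the $q$-adic valuation is not destroyed by these reciprocal sums, which it is not because $q>\sqrt{10n}$ forces $q$ to divide at most one factor $j-k+\tfrac14$ in the range and $\ell\le 2s+4<q$ keeps the multinomial coefficients $q$-adic units. Once (i) and (ii) are in place, multiplying the contributions across the $s+2$ copies of $F_{1/4}$, $s+2$ copies of $F_{3/4}$, and $2s+4$ copies of $(t+k)G(t)$ and using $v_q(d_n^{2s+4-i})=2s+4-i\ge 0$ yields $v_q\ge s+2$, and taking the product over all $q\in\Phi_n$ gives \eqref{strong_ari_a}.
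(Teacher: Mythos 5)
Your proposal has a genuine gap, and it sits exactly at the step you call the ``single-block statement.'' The claim that each factor individually satisfies $v_q\bigl(d_n^{\ell}\cdot\tfrac1{\ell!}F_{1/4}^{(\ell)}(-k)\bigr)\geqslant 1$, and similarly for $F_{3/4}$ and $(t+k)G(t)$, is false already for $\ell=0$: taking $k=0$ gives $(t+k)G(t)\big|_{t=0}=n!/n!=1$, a $q$-adic unit, and for general $k$ the value $F_{1/4}(-k)=\tfrac{2^{3n}}{n!}\bigl(\tfrac14-k\bigr)_n$ has $v_q$ equal to $\#\{0\leqslant j\leqslant n-1: q\mid 4(j-k)+1\}-\lfloor n/q\rfloor$, which is $0$ or $1$ depending on the residue of $k$ modulo $q$, and the hypothesis $\{n/q\}>\tfrac12$ does not force it to be $1$ for every $k$. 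What that hypothesis actually buys is divisibility of the \emph{combined} product: $F_{1/4}(-k)F_{3/4}(-k)\bigl((t+k)G(t)\big|_{t=-k}\bigr)^2$ equals, up to sign and a power of $2$, the factorial ratio $\tfrac{(4k)!(4n-4k)!}{(2k)!(2n-2k)!\,k!^2(n-k)!^2}$, whose $q$-valuation is $\lfloor 4y\rfloor+\lfloor 4x-4y\rfloor-\lfloor 2y\rfloor-\lfloor 2x-2y\rfloor-2\lfloor y\rfloor-2\lfloor x-y\rfloor\geqslant 1$ when $x=\{n/q\}>\tfrac12$; the valuation cannot be localized in any single slot, and sometimes it is carried by the binomial block, sometimes by one of the quarter-Pochhammer blocks. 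Consequently the pigeonhole count over Leibniz slots does not go through. Your point (ii) is worse off still: for $\ell\geqslant 1$ a derivative can delete precisely the one linear factor divisible by $q$, so no uniform per-block bound holds for positive derivative orders, and your sketch (``$q$ divides at most one factor and the multinomial coefficients are units'') does not address this loss.

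This is why the paper argues differently. It first evaluates the top coefficient in closed form, $a_{n,2s+4,k}=(-4k+2n)^{\delta}\bigl(\tfrac{(4k)!(4n-4k)!}{(2k)!(2n-2k)!k!^2(n-k)!^2}\bigr)^{s+2}$, so the whole product is treated at once and the floor inequality yields $v_q\geqslant s+2$ there. It then proves the graded bound $v_q(a_{n,2s+4-j,k})\geqslant s+2-j$ (equivalent to your target since $v_q(d_n)=1$) by induction on $j$, writing $\widetilde{A_n}^{(j)}=(\widetilde{A_n}\cdot U)^{(j-1)}$ with the logarithmic derivative $U=\widetilde{A_n}'/\widetilde{A_n}$ and using $v_q\bigl(U^{(v)}(-k)\bigr)\geqslant -1-v$ (here $q>\sqrt{10n}>2s+4$ and the oddness of $n$, which keeps $t=-k$ away from the pole at $-n/2$, are used). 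Each derivative order is allowed to cost one power of $q$, compensated by the $d_n^{2s+4-i}$ normalization; no attempt is made to gain a flat $q^{s+2}$ in every Leibniz term, which, as the counterexamples show, is not available. If you want to salvage your route you would have to group the factors into $s+2$ copies of the full combination, but the Leibniz expansion does not respect that grouping once derivatives fall on individual factors, and you would be forced back to something like the paper's induction.
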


\begin{proof}
Let $j = 2s+4 -i \in [0,2s+3]$. By \eqref{weak_ari_a}, our goal \eqref{strong_ari_a} is reduced to the following assertion: for any prime $q$ such that $\sqrt{10n}<q\leqslant n$ and $\left\{ n/q\right\} > 1/2$, for any $0 \leqslant j \leqslant 2s+3$ and any $0 \leqslant k \leqslant n$, we have
\begin{equation}\label{asser}
	v_q(a_{n,2s+4-j,k}) \geqslant s+2-j.
\end{equation}
  
We fix $q$ and $k$, and prove the assertion \eqref{asser} by induction on $j$. 

Let $\widetilde{A_n}(t):=(t+k)^{2s+4}A_n(t)$. Then \eqref{eqn_a=deri} can be rephrased as
\begin{equation}\label{eqn_a=deri_new}
 a_{n,2s+4-j,k} = \frac{1}{j!}\widetilde{A_n}^{(j)}(-k).
\end{equation} 
For $j=0$, we have
\begin{align*}
	a_{n,2s+4,k} &= \widetilde{A_n}(-k) \\ &= 2^{(6s+12)n}(-4k+2n)^{\delta}\left( \frac{(-k+1/4)_{n}(-k+3/4)_n}{k!^2(n-k)!^2}\right)^{s+2} \\
	&= (-4k+2n)^{\delta} \left( \frac{(4k)!(4n-4k)!}{(2k)!(2n-2k)!k!^2(n-k)!^2} \right)^{s+2}.
\end{align*}
To prove $v_q(a_{n,2s+4,k}) \geqslant s+2$, it suffices to show that
\[ v_q\left( \frac{(4k)!(4n-4k)!}{(2k)!(2n-2k)!k!^2(n-k)!^2} \right) \geqslant 1. \]
Write $x=\{n/q\}$ and $y=\{k/q\}$. We have $1/2 < x <1$. Since $q > \sqrt{10n}$, we have
\begin{align*}
v_q\left( \frac{(4k)!(4n-4k)!}{(2k)!(2n-2k)!k!^2(n-k)!^2} \right) 
&= \lfloor 4y \rfloor + \lfloor 4x-4y \rfloor - \lfloor 2y \rfloor - \lfloor 2x-2y \rfloor - 2\lfloor y \rfloor -2\lfloor x-y \rfloor.  
\end{align*}
It is straightforward to check that the right-hand side above is at least $1$ whenever $1/2<x<1$ and $0\leqslant y < 1$. So the assertion \eqref{asser} is true for $j=0$. 

Suppose now that $1 \leqslant j \leqslant 2s+3$ and the assertion \eqref{asser} is true for any smaller $j$. Let 
\begin{align}
 U(t) :=&~ \frac{\widetilde{A_n}^{\prime}(t)}{\widetilde{A_n}(t)} \notag\\
 =&~ \frac{\delta}{t+n/2} +(s+2)\sum_{\ell=0}^{n-1}\left( \frac{1}{t+\ell+1/4} + \frac{1}{t +\ell +3/4} \right) -(2s+4)\sum_{\ell=0 \atop \ell \neq k}^{n} \frac{1}{t+\ell}. \label{U}
\end{align}
Applying the Leibniz rule, we have 
\begin{align}
	\widetilde{A_n}^{(j)}(-k) &= \left(  \widetilde{A_n}(t) \cdot U(t)\right)^{(j-1)} \big|_{t=-k} \notag\\
	&= \sum_{v=0}^{j-1} \binom{j-1}{v} \widetilde{A_n}^{(j-1-v)}(-k) \cdot U^{(v)}(-k). \label{aaa}
\end{align}
For any $0\leqslant v \leqslant j-1$, by \eqref{eqn_a=deri_new} and the induction hypothesis we have
\begin{equation}\label{bbb}
	 v_q\left( \widetilde{A_n}^{(j-1-v)}(-k) \right) = v_q((j-1-v)!a_{n,2s+4-(j-1-v),k}) \geqslant s+2-(j-1-v).
\end{equation} 
On the other hand, by \eqref{U} we have 
\begin{equation}\label{U^(v)}
U^{(v)}(t) = (-1)^{v}v! \left( \frac{\delta}{(t+n/2)^{1+v}} +\sum_{\ell=0}^{n-1} \frac{s+2}{(t+\ell+1/4)^{1+v}} + \sum_{\ell=0}^{n-1} \frac{s+2}{(t +\ell +3/4)^{1+v}} -\sum_{\ell=0 \atop \ell \neq k}^{n} \frac{2s+4}{(t+\ell)^{1+v}} \right). 
\end{equation}
Since $q>\sqrt{10n}$, we can see from \eqref{U^(v)} that
\begin{equation}\label{ccc}
v_q\left( U^{(v)}(-k)\right) \geqslant -1-v.
\end{equation} 
(We have assumed that $n$ is odd, so $-k+n/2 \neq 0$ and $t=-k$ is not a pole of $U(t)$.)
Therefore, we conclude from \eqref{aaa},\eqref{bbb} and \eqref{ccc} that
\begin{equation}\label{ddd}
v_q\left( \widetilde{A_n}^{(j)}(-k) \right) \geqslant s+2-j. 
\end{equation}
Noting that $q > \sqrt{10n} > 2s+4$ and $j<2s+4$, by \eqref{eqn_a=deri_new} and \eqref{ddd} we obtain
\[ v_q(a_{n,2s+4-j,k}) \geqslant s+2-j, \]
which completes the induction procedure.
\end{proof}

\begin{remark}
	 The conclusion \eqref{strong_ari_a} still holds for any even $n>(2s+4)^2$ by slightly modifying the proof for the case $k=n/2$. Since we will take $n=2^m-1$ eventually, we do not need the even $n$ case. 
\end{remark}

\bigskip

\begin{lemma}\label{Ari_i}
	We have 
	\begin{align}
		\Phi_n^{-s-2}d_n^{2s+4-i} \rho_{n,i} &\in \mathbb{Z}, \quad (1 \leqslant i \leqslant 2s+4) \label{rho_i_ari} \\
		d_n^{s+2-i}\sigma_{n,i} &\in \mathbb{Z}. \quad (1 \leqslant i \leqslant s+2) \label{sigma_i_ari}
	\end{align}
\end{lemma}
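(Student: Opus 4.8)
The plan is to deduce the integrality statements for the $\rho_{n,i}$ and $\sigma_{n,i}$ directly from the formulas \eqref{def_rho_i} and \eqref{def_sigma_i} together with the arithmetic of the partial-fraction coefficients already established in Lemma \ref{Lem_4_2} and Lemma \ref{Lem_4_3}. Recall that $\rho_{n,i} = (-1)^{s}(i)_{s+1}4^{i+s}\sum_{k=0}^{n} a_{n,i,k}$ and $\sigma_{n,i} = (-1)^{s}(i)_{s+1}4^{i+s}\sum_{k=0}^{n} b_{n,i,k}$. The factor $4^{i+s}$ is a power of $2$, hence a unit away from $p=2$; the factor $(i)_{s+1} = i(i+1)\cdots(i+s)$ is a product of $s+1$ consecutive integers, hence divisible by $(s+1)!$ and in any case an integer. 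So once we know the right power of $d_n$ (and $\Phi_n$) clears the $a_{n,i,k}$ and $b_{n,i,k}$, the same clears $\rho_{n,i}$ and $\sigma_{n,i}$ up to the harmless factors $(i)_{s+1}4^{i+s}$.

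More precisely, for \eqref{sigma_i_ari} I would argue as follows: by \eqref{ari_b} we have $d_n^{s+2-i} b_{n,i,k} \in \mathbb{Z}$ for every $k$, so $d_n^{s+2-i}\sum_{k=0}^n b_{n,i,k} \in \mathbb{Z}$, and multiplying by the integer $(-1)^s (i)_{s+1} 4^{i+s}$ gives $d_n^{s+2-i}\sigma_{n,i} \in \mathbb{Z}$. For \eqref{rho_i_ari} the argument is parallel but one must be slightly careful about the range of $n$: Lemma \ref{Lem_4_3} gives $\Phi_n^{-s-2} d_n^{2s+4-i} a_{n,i,k} \in \mathbb{Z}$ only for odd $n > (2s+4)^2$. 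Since the paper only ever uses $n = 2^m-1$ (so $n$ is odd) and only needs the conclusion for large $n$, this is enough; alternatively one invokes the remark following Lemma \ref{Lem_4_3} for even $n$. Then $\Phi_n^{-s-2} d_n^{2s+4-i}\sum_{k=0}^n a_{n,i,k} \in \mathbb{Z}$, and multiplying by the integer $(-1)^s(i)_{s+1}4^{i+s}$ yields $\Phi_n^{-s-2}d_n^{2s+4-i}\rho_{n,i}\in\mathbb{Z}$.

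I do not expect any genuine obstacle here; this lemma is essentially bookkeeping, transferring the already-proven integrality of the partial-fraction coefficients through the explicit linear formulas for $\rho_{n,i}$ and $\sigma_{n,i}$. The only points requiring a word of care are (i) observing that $4^{i+s}$ and $(i)_{s+1}$ are integers and in particular cause no loss at any prime dividing $d_n$ or $\Phi_n$, and (ii) flagging the hypothesis $n > (2s+4)^2$ (and odd, or else appealing to the remark) under which Lemma \ref{Lem_4_3} applies, so that \eqref{rho_i_ari} is asserted in exactly that range — which suffices since ultimately $n=2^m-1\to\infty$.
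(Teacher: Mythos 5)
Your proposal is correct and follows essentially the same route as the paper, which likewise deduces \eqref{rho_i_ari} from \eqref{def_rho_i} together with \eqref{strong_ari_a}, and \eqref{sigma_i_ari} from \eqref{def_sigma_i} together with \eqref{ari_b}. Your explicit remark that \eqref{strong_ari_a} is only available for odd $n>(2s+4)^2$ (which suffices since ultimately $n=2^m-1\to\infty$) is a caveat the paper leaves implicit, and it is a reasonable point to flag.
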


\begin{proof}
	The equation \eqref{rho_i_ari} follows from \eqref{def_rho_i} and \eqref{strong_ari_a}. While \eqref{sigma_i_ari} follows from \eqref{def_sigma_i} and \eqref{ari_b}.
\end{proof}

The following lemma is crucial. It makes use of the multiple roots of $A_n(t)$ and $B_n(t)$. Such tricks have been used in \cite{FSZ19,LY20,Spr20}.

\begin{lemma}\label{Ari_0}
	We have 
	\begin{align}
		\Phi_n^{-s-2} d_n^{3s+5}\rho_{n,0} &\in \mathbb{Z}, \label{rho_0_ari}\\
		d_n^{2s+3}\sigma_{n,0} &\in \mathbb{Z}. \label{sigma_0_ari}
	\end{align}
\end{lemma}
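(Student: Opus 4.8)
The statement to prove is that $\Phi_n^{-s-2} d_n^{3s+5}\rho_{n,0} \in \mathbb{Z}$ and $d_n^{2s+3}\sigma_{n,0} \in \mathbb{Z}$. I will focus on the first claim; the second is the analogous but simpler statement for $B_n$, obtained by replacing $2s+4$ with $s+2$ throughout. Recall from \eqref{def_rho_0} that
\[
\rho_{n,0} = (-1)^{s+1} \sum_{i=1}^{2s+4}\sum_{k=1}^{n}\sum_{\ell=0}^{k-1} \frac{(i)_{s+1}\,a_{n,i,k}}{\left(\ell+\frac14\right)^{i+s+1}}.
\]
The naive bound would combine \eqref{weak_ari_a} (or \eqref{strong_ari_a}) with a bound on the denominators $(\ell+1/4)^{i+s+1}$; but $(\ell+1/4)^{i+s+1}$ is $4^{-(i+s+1)}(4\ell+1)^{i+s+1}$, and $4\ell+1$ can be as large as about $4n$, so $\operatorname{lcm}$ of these over $\ell < n$ is controlled by $d_{4n}$, not $d_n$ — far too lossy. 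The standard remedy (used in \cite{FSZ19,LY20,Spr20}, as the text signals just before the lemma) is to not integrate term-by-term over $\ell$, but rather to rewrite the inner sum $\sum_{\ell=0}^{k-1}(\ell+1/4)^{-(i+s+1)}$ using the multiplicity structure of $A_n(t)$: the rational function $A_n(t+1/4)$ has a pole of order $\leqslant 2s+4$ at each $t=-k$, and the higher $\ell$-sum is really a value of a derivative of a "tail" rational function that still has all its poles among $\{0,-1,\ldots,-n\}$, so its denominators are governed by $d_n$.

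Concretely, the plan is: first express $\rho_{n,0}$ as a Volkenborn-type integral contribution that I can recognize as $\sum_{k} \mathrm{res}$-like data. More precisely, by Lemma \ref{Lem_translation_formula} the quantity $\sum_{\ell=0}^{k-1}(\ell+1/4)^{-(i+s+1)}$ equals (up to the factor $(i+s)$ already extracted in the proof of Lemma \ref{Lin}) the difference of two Volkenborn integrals, $\int_{\mathbb{Z}_2}(t+1/4)^{-(i+s)}\,dt - \int_{\mathbb{Z}_2}(t+k+1/4)^{-(i+s)}\,dt$, divided by $(i+s)$. So I would instead directly analyze $S_n - \rho_{n,0}$-type pieces: the honest statement is that $\rho_{n,0}$ is, up to sign, $\sum_{k=1}^n$ of the derivative-evaluation
\[
\frac{1}{(s+1)!}\Bigl(\text{(polynomial part after removing the pole at }-k+1/4\text{)}\Bigr)^{(s+1)}
\]
summed against the shifted tails. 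The cleanest route is: write $A_n^{(s)}(t+1/4) = \sum_{i,k} (-1)^s (i)_s a_{n,i,k}(t+k+1/4)^{-(i+s)}$, apply Lemma \ref{Lem_translation_formula} to each term with shift $k$, and collect; the $\zeta_2$-terms are $\rho_{n,i}$ (handled by Lemma \ref{Ari_i}), and $\rho_{n,0}$ is the "correction sum" $-(-1)^s\sum_{i,k}(i)_s a_{n,i,k}(i+s)\sum_{\ell=0}^{k-1}(\ell+1/4)^{-(i+s+1)}$. I then regroup by $\ell$: $\rho_{n,0} = (-1)^{s+1}\sum_{\ell=0}^{n-1}\frac{1}{(\ell+1/4)^{s+?}}\cdot(\text{something})$ — and the "something", summed over $i$ and over $k>\ell$, is exactly $\sum_{i=1}^{2s+4}(i)_{s+1}\sum_{k=\ell+1}^n a_{n,i,k}(\ell+k+\cdots)^{\cdots}$, which is a derivative of the rational function $A_n$ restricted to its poles at $0,\dots,-n$ but evaluated at the point $\ell+1/4$ which is NOT a pole. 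That evaluation is where the denominator $d_n^{3s+5}$ (not $d_{4n}$) comes from: $(\ell+1/4)$ for $0\le \ell\le n-1$ contributes, via the identity $\frac{1}{\ell+1/4}=\frac{4}{4\ell+1}$ and the observation that in $A_n(t+1/4)$ the relevant combination telescopes so that only $d_n$ (coming from the $a_{n,i,k}$'s having denominators dividing $d_n^{2s+4-i}$, times an extra $d_n^{i+s+1}/d_n^{\text{something}}$) survives; more honestly, I would mimic \cite[Lemma 16]{Zud04} / \cite[Prop. 3.2]{LY20}: the full sum over $k$ of $a_{n,i,k}/(t+k)^i$ is $A_n(t-1/4)$, whose value and derivatives at integer points $t=\ell+1\in\{1,\dots,n\}$ lie in $d_n^{\bullet}\mathbb{Z}$ away from poles — but here $t=\ell+1/4$ is not an integer, so I must use instead that $A_n(t+1/4)$ has its poles only at half-... no: $A_n(t)$ has poles at $t\in\{0,-1,\dots,-n\}$, so $A_n^{(s)}(t+1/4)$ evaluated at $t=\ell\in\mathbb{Z}$, $0\le\ell\le n-1$, is a value of $A_n$ (and derivatives) at the point $\ell+1/4$, which avoids all poles, and whose $q$-adic and $d_n$-integrality I get from the partial-fraction bound \eqref{weak_ari_a} together with $v_q(\ell+1/4)=0$ for $q$ odd and $v_2(\ell+1/4)=-2$.

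So the argument structure is: (i) rewrite $\rho_{n,0} = (-1)^{s+1}\sum_{\ell=0}^{n-1} (\ell+1/4)^{-(s+1)}\cdot R_{n,\ell}$ where $R_{n,\ell} = \sum_{i=1}^{2s+4}(i)_{s+1}\sum_{k=\ell+1}^n a_{n,i,k}(\ell+1/4 - (-k))^{-i}\cdot(\ell+1/4)^{?}$ — i.e., identify $R_{n,\ell}$ with the $(s+1)$-th Taylor coefficient of the "upper tail" $\sum_{k>\ell}\sum_i a_{n,i,k}(t+k)^{-i}$ at $t=\ell$, scaled; (ii) observe this upper tail is itself a rational function with poles only in $\{-\ell-1,\dots,-n\}$, so that by the same Leibniz-rule argument as in Lemma \ref{Lem_4_2}/\ref{Lem_4_3} — now applied to the factorization of the tail, or more simply by \eqref{weak_ari_a}+\eqref{strong_ari_a} directly since the $a_{n,i,k}$ are already controlled — we get $\Phi_n^{-s-2} d_n^{2s+4-i}a_{n,i,k}\in\mathbb{Z}$; (iii) bookkeep the powers of $d_n$: each term carries $a_{n,i,k}$ (needs $d_n^{2s+4-i}$, and after dividing by $\Phi_n^{s+2}$ this is integral by \eqref{strong_ari_a}), and the factor $(\ell+1/4)^{-i}$ contributes, after clearing, at most $d_n^{i}$ worth of denominator from the odd part $4\ell+1 \le 4n-3$... and here is the subtlety, $4\ell+1$ can exceed $n$, so I instead must argue that the whole $\ell$-sum, not each term, has denominator dividing $d_n^{2s+3}\cdot(\text{unit at odd }q)$: this is precisely the content of Zudilin's lemma on "sums of $1/(\ell+x)^m$ being values of rational functions", and I will invoke it. The total exponent is $2s+4-1 + (s+1) + 1 = 3s+5$ after accounting for $\rho_{n,i}$'s worst case $i=1$ combined with the extra $d_n^{s+1}$ from the $(s+1)$-fold differentiation and one more $d_n$ from the $\ell$-summation — matching the claimed $d_n^{3s+5}$.

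The main obstacle is step (iii): showing that the sum $\sum_{\ell=0}^{n-1}(\ell+1/4)^{-(s+1)}R_{n,\ell}$ has $q$-adic valuation $\geqslant -v_q(d_n^{3s+5})$ for each prime $q$ (and the $\Phi_n^{-s-2}$ refinement for $\sqrt{10n}<q\le n$), without the factor $4\ell+1 \sim 4n$ forcing a $d_{4n}$. The resolution is that $(\ell+1/4)^{-(s+1)}R_{n,\ell}$ is secretly the $(s+1)$-st derivative at $t=\ell$ of a rational function whose only poles are at $-\ell-1,\dots,-n$ (all in $\{-1,\dots,-n\}$), so its denominators — after the partial-fraction / Leibniz analysis — are governed by $d_n$ and by $v_q(\prod_{k=\ell+1}^{n}(k-\ell)) = v_q((n-\ell)!) $-type quantities, all $\le v_q(n!) \ll v_q(d_n^{\text{small}})$; concretely I will follow \cite[Lemma 16]{Zud04} verbatim, which is exactly designed for this. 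For the odd primes $q$ with $\sqrt{10n}<q\le n$ one notes $v_q(\ell+1/4)=0$ and $v_q$ of the relevant factorials is $0$ or $1$, so the $\Phi_n^{s+2}$ absorbs the loss exactly as in Lemma \ref{Lem_4_3}. The $\sigma_{n,0}$ statement is identical with $2s+4 \rightsquigarrow s+2$, $3s+5 \rightsquigarrow 2s+3$, and no $\Phi_n$ needed since $B_n$ carries no auxiliary-prime refinement.
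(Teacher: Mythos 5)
You have correctly located the difficulty---in \eqref{def_rho_0} the denominators $(\ell+\tfrac14)^{i+s+1}$ have odd part $(4\ell+1)^{i+s+1}$ with $4\ell+1$ possibly as large as $4n-3$, so clearing denominators term by term costs $d_{4n}$ rather than $d_n$---but your outline does not actually overcome it. The step you rely on at odd primes, ``$v_q(\ell+1/4)=0$ for $q$ odd'', is false ($\ell=2$ gives $v_3(\ell+\tfrac14)=2$; in general $4\ell+1$ can be divisible by odd prime powers exceeding everything in $d_n$), and this is precisely the bad case one must handle. The fallback you invoke, \cite[Lemma~16]{Zud04}, does not cover it either: that lemma gives $d_n$-integrality of Taylor coefficients of $(t+k)G(t)$ at the integer points $t=-k$ (it is what Lemma~\ref{Lem_4_1} uses), and says nothing about sums of $(\ell+\tfrac14)^{-m}$, whose denominators are not confined to $[1,n]$. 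In addition, the regrouping in your step (i) is incorrect as written: collecting \eqref{def_rho_0} by $\ell$ yields $\sum_i (i)_{s+1}(\ell+\tfrac14)^{-(i+s+1)}\sum_{k>\ell}a_{n,i,k}$, whose denominator does not depend on $k$, so it is not the value at $t=\ell$ of the tail $\sum_{k>\ell}\sum_i a_{n,i,k}(t+k)^{-i}$ (that would produce denominators $(\ell+k+\tfrac14)^{i}$); and the exponent count ``$2s+4-1+(s+1)+1=3s+5$'' is numerology rather than a derivation.

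The mechanism the paper actually uses, and which is absent from your sketch, exploits the zeros rather than the poles: each point $t=-k_0+\ell_0+\tfrac14$ with $0\leqslant \ell_0<k_0\leqslant n$ is a zero of $A_n$ of multiplicity $s+2$ (coming from the factor $(t+\tfrac34)_n^{s+2}$), hence a zero of $A_n^{(s+1)}$. Arguing by contradiction, if some block $\sum_i (i)_{s+1}a_{n,i,k_0}(\ell_0+\tfrac14)^{-(i+s+1)}$ failed to be integral after multiplication by $\Phi_n^{-s-2}d_n^{3s+5}$, this vanishing lets one replace it by minus the sum of the corresponding blocks with $k_1\neq k_0$; combined with \eqref{strong_ari_a} this forces, for some odd prime $q$, both $v_q(4\ell_0+1)>v_q(d_n)$ and $v_q(4(k_1-k_0+\ell_0)+1)>v_q(d_n)$, hence $v_q(k_1-k_0)>v_q(d_n)$, contradicting $0<|k_1-k_0|\leqslant n$. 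It is exactly this ``two incompatible divisibilities'' step that disposes of large prime powers dividing $4\ell+1$; without it (or an equivalent use of the multiple zeros, and likewise for $B_n$ in the case of $\sigma_{n,0}$), the bounds $d_n^{3s+5}$ and $d_n^{2s+3}$ are not reachable from \eqref{weak_ari_a} and \eqref{strong_ari_a} alone, so your proposal has a genuine gap.
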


\begin{proof}
	Suppose that \eqref{rho_0_ari} is not true. Then by \eqref{def_rho_0} we deduce that, there exist an integer $k_0 \in [1,n]$ and an integer $\ell_0 \in [0,k_0-1]$ such that
	\[ \Phi_n^{-s-2} d_n^{3s+5} \sum_{i=1}^{2s+4} \frac{(-1)^{s+1}(i)_{s+1}a_{n,i,k_0}}{\left(\ell_0+\frac{1}{4}\right)^{i+s+1}} \notin \mathbb{Z}.  \] 
	Noticing that $t=-k_0+\ell_0+1/4$ is a root of $A_n(t)$ with multiplicity $s+2$ by \eqref{def_A}, we deduce that $t=-k_0+\ell_0+1/4$ is a root of
	\[ A_n^{(s+1)}(t) = \sum_{i=1}^{2s+4}\sum_{k=0}^{n} \frac{(-1)^{s+1}(i)_{s+1}a_{n,i,k}}{(t+k)^{i+s+1}}. \] 
	Thus, we obtain
	\[ \Phi_n^{-s-2} d_n^{3s+5} \sum_{i=1}^{2s+4} \frac{(-1)^{s+1}(i)_{s+1}a_{n,i,k_0}}{\left(\ell_0+\frac{1}{4}\right)^{i+s+1}} = -\Phi_n^{-s-2} d_n^{3s+5}\sum_{i=1}^{2s+4}\sum_{0 \leqslant k \leqslant n \atop k \neq k_0} \frac{(-1)^{s+1}(i)_{s+1}a_{n,i,k}}{\left(-k_0+\ell_0+k+\frac{1}{4}\right)^{i+s+1}} \notin \mathbb{Z}.\]
Therefore, there exist $1 \leqslant i_0,i_1\leqslant 2s+4$ and $0 \leqslant k_1 \leqslant n$ with $k_1 \neq k_0$ and a prime $q$ such that	
\[ v_q\left( \Phi_n^{-s-2} d_n^{3s+5} \frac{(-1)^{s+1}(i_0)_{s+1}a_{n,i_0,k_0}}{\left(\ell_0+\frac{1}{4}\right)^{i_0+s+1}} \right) <0,\quad v_q\left( \Phi_n^{-s-2} d_n^{3s+5} \frac{(-1)^{s+1}(i_1)_{s+1}a_{n,i_1,k_1}}{\left(-k_0+\ell_0+k_1+\frac{1}{4}\right)^{i_1+s+1}} \right) <0. \]
On the other hand, we have by \eqref{strong_ari_a} that
\[ v_q\left( \Phi_n^{-s-2}d_n^{2s+4-i_0}a_{n,i_0,k_0} \right) \geqslant 0, \quad v_q\left( \Phi_n^{-s-2}d_n^{2s+4-i_1}a_{n,i_1,k_1} \right) \geqslant 0.  \]
It follows that
\[ v_q\left( \ell_0 + \frac{1}{4} \right) > v_q(d_n), \quad v_q\left(-k_0+ \ell_0 +k_1+ \frac{1}{4} \right) > v_q(d_n),  \]
then
\[ v_q(-k_0+k_1) > v_q(d_n). \]
But this contradicts $0 <|-k_0+k_1|\leqslant n$. So we conclude that \eqref{rho_0_ari} is true.

The proof of \eqref{sigma_0_ari} is similar. The key point is that $B_n^{(s+1)}(t)$ has roots $-1+1/4,-2+1/4,\ldots,-n+1/4$. 
\end{proof}

	

\section{Archimedean norm of the coefficients}

It is well known that the prime number theorem implies 
\begin{equation}\label{d_n_est}
d_n = e^{(1+o(1))n} \quad\text{as~} n \rightarrow \infty. 
\end{equation}
The following lemma is also a corollary of the prime number theorem.

\begin{lemma}\label{Phi_est}
	We have the following asymptotic estimate of the factor $\Phi_n$ defined in \eqref{def_Phi}:
	\[ |\Phi_n| = \exp\left((2\log 2 -1+o(1))n\right) \quad\text{as~} n \rightarrow \infty.\]
\end{lemma}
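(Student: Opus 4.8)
The plan is to estimate $\log|\Phi_n|$ via Mertens-type prime sums and then apply the prime number theorem. Taking logarithms in the definition \eqref{def_Phi}, we have
\[
\log|\Phi_n| = \sum_{\sqrt{10n}<q\leqslant n,\ \{n/q\}>1/2} \log q.
\]
I would first discard the lower cutoff $\sqrt{10n}$: the primes $q\leqslant\sqrt{10n}$ contribute at most $\vartheta(\sqrt{10n}) = O(\sqrt n) = o(n)$ to any such sum, so up to an $o(n)$ error we may sum over all primes $q\leqslant n$ with $\{n/q\}>1/2$. The condition $\{n/q\}>1/2$ means that, writing $n = mq + r$ with $0\leqslant r<q$, we have $r > q/2$, equivalently $q/2 < n - mq < q$, i.e. $\frac{2n}{2m+2} < q < \frac{2n}{2m+1}$ for some integer $m\geqslant 0$. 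Hence
\[
\log|\Phi_n| = \sum_{m\geqslant 0} \sum_{\frac{n}{m+1}<q\leqslant\frac{2n}{2m+1}} \log q + o(n) = \sum_{m\geqslant 0}\Bigl(\vartheta\bigl(\tfrac{2n}{2m+1}\bigr) - \vartheta\bigl(\tfrac{n}{m+1}\bigr)\Bigr) + o(n),
\]
where $\vartheta(x)=\sum_{q\leqslant x}\log q$ is the first Chebyshev function. Only the terms with $\frac{2n}{2m+1}\geqslant\sqrt{10n}$, i.e. $m = O(\sqrt n)$, are nonzero, so the sum over $m$ is finite with $O(\sqrt n)$ terms.

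Next I would invoke the prime number theorem in the form $\vartheta(x) = x + o(x)$. The difficulty is that the error terms $o(x)$ across $O(\sqrt n)$ summands must be controlled uniformly; I would handle this by splitting at a threshold $m\leqslant M$ versus $m>M$ for a parameter $M\to\infty$ slowly (e.g. $M = \lfloor\log n\rfloor$). For $m\leqslant M$, the arguments $\frac{2n}{2m+1}$ and $\frac{n}{m+1}$ both tend to infinity with $n$, so $\vartheta(\frac{2n}{2m+1}) - \vartheta(\frac{n}{m+1}) = \bigl(\frac{2}{2m+1} - \frac{1}{m+1}\bigr)n + o(n) = \frac{1}{(m+1)(2m+1)}n + o(n)$, and there are only $O(\log n)$ such terms, contributing $o(n)$ in aggregate error. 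For $m>M$, one uses the crude bound $\vartheta(x)\leqslant Cx$ to see the tail is $\leqslant Cn\sum_{m>M}\frac{1}{(m+1)(2m+1)} = o(n)$ as $M\to\infty$, and likewise the exact tail $\sum_{m>M}\frac{n}{(m+1)(2m+1)}$ is $o(n)$. Combining,
\[
\log|\Phi_n| = n\sum_{m\geqslant 0} \frac{1}{(m+1)(2m+1)} + o(n).
\]

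Finally I would evaluate the series. Since $\frac{1}{(m+1)(2m+1)} = \frac{2}{2m+1} - \frac{1}{m+1} = 2\bigl(\frac{1}{2m+1} - \frac{1}{2m+2}\bigr)$, the partial sums telescope against the alternating harmonic series:
\[
\sum_{m\geqslant 0}\frac{1}{(m+1)(2m+1)} = 2\sum_{m\geqslant 0}\Bigl(\frac{1}{2m+1} - \frac{1}{2m+2}\Bigr) = 2\log 2.
\]
Therefore $\log|\Phi_n| = (2\log 2 - 1)n + o(n)$... wait, that is not right: the computation above gives $\log|\Phi_n| = (2\log 2)n + o(n)$ from the main series alone, with no $-1$. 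Let me recheck the bookkeeping: I claimed we could drop the $\sqrt{10n}$ cutoff, but that is exactly the point where a correction of size $n$ could enter — and indeed it does not here, so the honest conclusion of this computation is $\log|\Phi_n| = (2\log 2)n + o(n)$, i.e. $|\Phi_n| = \exp((2\log 2 + o(1))n)$. To reconcile with the stated $\exp((2\log 2 - 1 + o(1))n)$, I expect the resolution is that $\Phi_n$ should be compared not to $1$ but that the relevant quantity in the applications is $|\Phi_n|^{-s-2}d_n^{\cdots}$, OR that I have miscounted the density of primes satisfying $\{n/q\}>1/2$; the careful step — and the one I flag as the main obstacle — is getting the constant exactly right, which requires tracking whether the inequality $\{n/q\}>1/2$ on the range $q\in(\sqrt{10n},n]$ picks up a factor that, combined with the contribution $\vartheta(n)=n+o(n)$ of *all* primes up to $n$, yields $(2\log 2 - 1)n$ rather than $(2\log 2)n$. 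I would resolve this by writing $\sum_{q\leqslant n}\log q = n+o(n)$ and splitting the primes $q\leqslant n$ according to the value of $\{n/q\}$, isolating the $\{n/q\}>1/2$ part as $\log|\Phi_n|+o(n)$ and computing the complementary part $\sum_{\{n/q\}\leqslant 1/2}\log q$ by the same telescoping method, so that the two pieces sum to $n+o(n)$; matching this against the series evaluation pins down $\log|\Phi_n| = (2\log 2 - 1)n + o(n)$, completing the proof.
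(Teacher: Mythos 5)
Your computation contains a concrete indexing error, and you never locate it, so the argument as written does not establish the stated constant. When you rewrite the condition $\{n/q\}>1/2$ as $\frac{n}{m+1}<q<\frac{2n}{2m+1}$ with $m=\lfloor n/q\rfloor$, you must keep the constraint from the definition \eqref{def_Phi} that the product runs only over primes $q\leqslant n$: since $q\leqslant n$ forces $n/q\geqslant 1$, we have $m\geqslant 1$, so the term $m=0$ --- whose interval is $n<q<2n$ --- must be excluded. Including it adds $\vartheta(2n)-\vartheta(n)=n+o(n)$ spuriously, and this is exactly the discrepancy you then notice: $\sum_{m\geqslant 0}\frac{1}{(m+1)(2m+1)}=2\log 2$, whereas the correct sum $\sum_{m\geqslant 1}\frac{1}{(m+1)(2m+1)}=2\log 2-1$ gives the lemma. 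The paper's proof is the same telescoping computation, with the summation index starting at $k=1$ for precisely this reason; your handling of uniformity (splitting at $m\leqslant M$ with $M\to\infty$ slowly, plus a Chebyshev bound for the tail) is fine and matches what the paper does by truncating at $K$ and letting $K\to\infty$.

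Your proposed ``resolution'' at the end --- computing the complementary sum over $q\leqslant n$ with $\{n/q\}\leqslant 1/2$ and subtracting it from $\vartheta(n)=n+o(n)$ --- would indeed produce the right constant if carried out (the complement contributes $(2-2\log 2)n+o(n)$), but you leave it as a sketch, you do not identify where the original bookkeeping went wrong, and you also float an incorrect alternative explanation, namely that the missing $-1$ might be absorbed into how $\Phi_n^{-s-2}d_n^{\cdots}$ is used later. The lemma is a statement about $\Phi_n$ alone, and the constant $2\log 2-1$ comes entirely from the restriction $q\leqslant n$ in \eqref{def_Phi}. Start the sum at $m=1$ and your argument closes as is.
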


\begin{proof}
	By the prime number theory, we know that 
	\begin{equation}\label{PNT}
		\sum_{q \leqslant Y} \log q = (1+o(1))Y, \quad \text{as~}Y\rightarrow\infty.
	\end{equation}
We have
\[ \log \Phi_n = \sum_{\sqrt{10n}<q\leqslant n \atop \left\{ \frac{n}{q} \right\}>\frac{1}{2}} \log q = \sum_{q\leqslant n \atop \left\{ \frac{n}{q} \right\}>\frac{1}{2}} \log q + O(\sqrt{n}).\]
It suffices to show that
\[ L_n:= \sum_{q\leqslant n \atop \left\{ \frac{n}{q} \right\}>\frac{1}{2}} \log q = (2\log 2 -1+o(1))n. \]
In fact, we have
\begin{align*}
	L_n &= \sum_{k=1}^{\infty} \sum_{k+\frac{1}{2} < \frac{n}{q} < k+1} \log q \\
	&= \sum_{k=1}^{\infty} \sum_{\frac{n}{k+1} < q < \frac{n}{k+1/2}} \log q.
\end{align*}
It follows that for every integer $K \geqslant 2$ we have
\[ \sum_{k=1}^{K} \sum_{\frac{n}{k+1} < q < \frac{n}{k+1/2}} \log q \leqslant L_n \leqslant \sum_{k=1}^{K} \sum_{\frac{n}{k+1} < q < \frac{n}{k+1/2}} \log q + \sum_{q < \frac{n}{K+3/2}} \log q. \]
Then \eqref{PNT} implies that 
\[ n\sum_{k=1}^{K} \left( \frac{1}{k+1/2} - \frac{1}{k+1} \right) +o(n) \leqslant L_n \leqslant n\sum_{k=1}^{K} \left( \frac{1}{k+1/2} - \frac{1}{k+1} \right) + \frac{n}{K+3/2} +o(n). \]
Letting $K \rightarrow \infty$ we obtain
\[ L_n = n\sum_{k=1}^{\infty} \left( \frac{1}{k+1/2} - \frac{1}{k+1} \right) + o(n) = (2\log 2 -1)n +o(n),\]
as desired.
\end{proof}

\begin{lemma}\label{rhosigma_est}
	As $n \rightarrow \infty$, we have
	\begin{align}
		\max_{0 \leqslant i \leqslant 2s+4} |\rho_{n,i}| &\leqslant 2^{(6s+12+o(1))n}, \label{rho_est} \\
		\max_{0 \leqslant i \leqslant s+2} |\sigma_{n,i}| &\leqslant 2^{(3s+6+o(1))n}. \label{sigma_est}
	\end{align}
\end{lemma}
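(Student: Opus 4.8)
The plan is to bound the Archimedean sizes of the coefficients $\rho_{n,i}$ and $\sigma_{n,i}$ by estimating the partial-fraction coefficients $a_{n,i,k}$ and $b_{n,i,k}$ directly, and then reading off the exponential growth rate from the explicit formulas \eqref{def_rho_0}--\eqref{def_sigma_i}. Since \eqref{def_rho_i} and \eqref{def_sigma_i} express $\rho_{n,i}$ ($i\geqslant 1$) and $\sigma_{n,i}$ ($i\geqslant 1$) as fixed polynomial-in-$n$ multiples of $\sum_k a_{n,i,k}$ (respectively $\sum_k b_{n,i,k}$), while \eqref{def_rho_0} and \eqref{def_sigma_0} involve an extra double sum over $k$ and $\ell$ of ratios $a_{n,i,k}/(\ell+\tfrac14)^{i+s+1}$ whose denominators are $\geqslant (1/4)^{i+s+1}$ in absolute value and bounded in terms of $n$, everything reduces to a bound of the shape $\max_{i,k}|a_{n,i,k}| \leqslant 2^{(6s+12+o(1))n}$ and $\max_{i,k}|b_{n,i,k}| \leqslant 2^{(3s+6+o(1))n}$; the factors $(i)_{s+1}$, $4^{i+s}$, $n$, $n^2$, and the cardinality of the index sets are all $n^{O(1)}=2^{o(n)}$ and can be absorbed.

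First I would estimate $|A_n(t)|$ and $|B_n(t)|$ on a suitable circle. Recall $a_{n,i,k} = \frac{1}{(2s+4-i)!}\bigl((t+k)^{2s+4}A_n(t)\bigr)^{(2s+4-i)}\big|_{t=-k}$ by \eqref{eqn_a=deri}, so by Cauchy's integral formula $|a_{n,i,k}| \leqslant \frac{1}{r^{2s+4-i}}\max_{|t+k|=r}\bigl|(t+k)^{2s+4}A_n(t)\bigr|$ for any convenient radius $r$, say $r=1/2$. On $|t+k|=1/2$ with $0\leqslant k\leqslant n$, each factor $\bigl(t+\tfrac14\bigr)_n$, $\bigl(t+\tfrac34\bigr)_n$ has absolute value at most a constant times $(n+1)!$ (crudely, each of the $n$ linear factors is $O(n)$), $(t+k)^{2s+4}/(t)_{n+1}^{2s+4}$ has absolute value at most $\bigl(C/(k!\,(n-k)!)\bigr)^{2s+4}$ up to a polynomial factor, $(4t+2n)^\delta = O(n)$, and the prefactor is $2^{(6s+12)n}$. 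Putting these together and using $\frac{(n!)^2}{k!(n-k)!} \leqslant n!\binom{n}{k}\cdot\frac{1}{1}\leqslant n!\,2^n$, one gets $|a_{n,i,k}| \leqslant 2^{(6s+12)n}\cdot (n!)^{2(s+2)} / (k!(n-k)!)^{2s+4}\cdot 2^{o(n)} \leqslant 2^{(6s+12)n}\cdot 2^{(2s+4)n}\cdot 2^{o(n)}$? That overshoots, so I would instead argue more carefully using $\frac{(4n)!}{(2n)!(n!)^2}\sim \frac{4^{4n}}{2^{2n}\cdot (2\pi n)^{3/2}\,\cdots}$, i.e. directly estimate the central factorial ratios appearing in $\widetilde{A_n}(-k)$ and their derivatives via the closed form already computed in the proof of Lemma \ref{Lem_4_3}, $a_{n,2s+4,k} = (-4k+2n)^{\delta}\bigl(\tfrac{(4k)!(4n-4k)!}{(2k)!(2n-2k)!\,k!^2(n-k)!^2}\bigr)^{s+2}$, whose size is maximized (up to $2^{o(n)}$) at $k=n$ or $k=0$, giving $\bigl(\tfrac{(4n)!}{(2n)!(n!)^2}\bigr)^{s+2}= 2^{(6n+o(n))(s+2)}$, hence combined with $2^{(6s+12)n}$ is... again too big. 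The correct bookkeeping: the prefactor $2^{(6s+12)n}$ is exactly designed to cancel against $2$-powers in these factorials, and what survives Archimedean-wise is $\bigl(\tfrac{(4n)!}{(2n)!(n!)^2\,2^{6n}}\bigr)^{s+2} = (1+o(1))^{n(s+2)}\cdot\ldots$ — wait, $\log\tfrac{(4n)!}{(2n)!(n!)^2} = (4\log 4 - 2\log 2)n + o(n) = 6\log2\cdot n+o(n)$, so $\tfrac{(4n)!}{(2n)!(n!)^2\cdot 2^{6n}} = 2^{o(n)}$, and thus $|A_n(t)|$ on $|t+k|=1/2$ is of size $2^{(6s+12)n}\cdot 2^{o(n)}$. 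I will therefore carry out the estimate keeping the prefactor and the factorial ratio together, showing $\max_{i,k}|a_{n,i,k}|\leqslant 2^{(6s+12+o(1))n}$ directly from the Cauchy estimate applied to $\widetilde{A_n}(t) = 2^{(6s+12)n}H(t)F_{1/4}(t)^{s+2}F_{3/4}(t)^{s+2}((t+k)G(t))^{2s+4}$, noting $|F_{1/4}(t)|,|F_{3/4}(t)| = 2^{o(n)}$ on $|t+k|=1/2$ (since $\tfrac{2^{3n}}{n!}(t+\tfrac14)_n$ has absolute value $\leqslant \tfrac{2^{3n}}{n!}\cdot C\,(n+1)! = 2^{o(n)}$ only if $2^{3n}(n+1)/n!\cdot\prod$... — more precisely $\log\bigl|\tfrac{2^{3n}}{n!}(t+1/4)_n\bigr|\leqslant 3n\log2 + \log\tfrac{\Gamma(n+1+O(1))}{n!} = 3n\log2 + o(n)$, hmm that is not $o(n)$).

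Let me restructure cleanly: the honest statement is $|A_n(t)| \leqslant 2^{(6s+12+o(1))n}$ uniformly for $t$ on the union of circles $|t+k|=1/2$, $0\leqslant k\leqslant n$, because $\bigl|2^{3n}(t+\tfrac14)_n/n!\bigr|$ and its reciprocal-type companions, when all $2(s+2)$ numerator Pochhammers and the $2s+4$ copies of $n!/(t)_{n+1}$ are multiplied, yield (by Stirling applied to the $k=0,n$ extreme and standard convexity for intermediate $k$) a factor bounded by $2^{o(n)}$ after the prefactor $2^{(6s+12)n}$ is accounted for — this is precisely the content that $6\log2 = 4\log4 - 2\log2$ makes the leading exponentials cancel. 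Concretely: on $|t+k|=1/2$, $|A_n(t)|\leqslant 2^{(6s+12)n}\cdot 2^{o(n)}\cdot\bigl(\tfrac{(4n)!}{(2n)!\,(n!)^2}\bigr)^{s+2}\cdot 2^{-6(s+2)n}\cdot(\text{poly}) = 2^{(6s+12)n}\cdot 2^{-(6s+12)n}\cdot 2^{(6\log2\cdot(s+2)+o(1))n}$; since $6\log 2 < 6\log 2$… I am going in circles, so the writeup will simply fix a radius, invoke Stirling once to get $\max_{|t+k|=1/2}|(t+k)^{2s+4}A_n(t)| \leqslant C^n n^{O(1)}$ with the constant $C = 2^{6s+12}$ coming out after the cancellation of $4$-powers, then apply the Cauchy estimate $|a_{n,i,k}|\leqslant 2^{2s+4}\max|(t+k)^{2s+4}A_n(t)|$, and finally feed this into \eqref{def_rho_0}--\eqref{def_rho_i}, absorbing the $n^{O(1)}$ and $(i)_{s+1}4^{i+s}$ and the $O(n^2)$ terms of the double sum into $2^{o(n)}$. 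The bound \eqref{sigma_est} is strictly easier: $B_n(t) = 2^{(3s+6)n}(t+\tfrac34)_n^{s+2}/(t)_{n+1}^{s+2}$ has one numerator Pochhammer and $s+2$ denominator factors, the $2^{(3s+6)n}$ prefactor cancels the relevant $4$-powers from $(3n)!/((2n)!\,n!)$-type ratios, giving $|B_n(t)|\leqslant 2^{(3s+6+o(1))n}$ on the same circles and hence \eqref{sigma_est} by the identical argument. The main obstacle is purely bookkeeping: pinning down the precise Stirling cancellation so that the exponential constant is exactly $2^{6s+12}$ (resp.\ $2^{3s+6}$) with only a $2^{o(n)}$ error, uniformly in $k\in[0,n]$ — there is no conceptual difficulty, only the need to handle the range of $k$ uniformly via the convexity of $x\mapsto x\log x$, which is entirely standard in the Archimedean theory of such linear forms.
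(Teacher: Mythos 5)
Your overall route is the same as the paper's: represent $a_{n,i,k}$ (and $b_{n,i,k}$) by Cauchy's integral formula on a small circle about $t=-k$, bound $\sup|A_n|$ there uniformly in $k\in[0,n]$, and then absorb everything else — the factors $(i)_{s+1}$, $4^{i+s}$, the $O(n^2)$ terms of the double sum in \eqref{def_rho_0} with $(\ell+\tfrac14)^{-(i+s+1)}\leqslant 4^{3s+5}$, and all $n^{O(1)}$ constants — into $2^{o(n)}$; the paper works on $|t+k|=\tfrac18$ rather than $\tfrac12$, which is immaterial. However, the one nontrivial step, namely the uniform estimate $\sup_{|t+k|=r}|A_n(t)|\leqslant 2^{(6s+12)n}n^{O(1)}$, is asserted but never actually established in your write-up, and the mechanism you propose for it is misidentified: the ``cancellation of $4$-powers against the prefactor via Stirling'' is what happens when the quarter-shifted Pochhammers are evaluated at the integer points $t=-k$ (as in the closed form for $a_{n,2s+4,k}$ in the proof of Lemma \ref{Lem_4_3}); it is not what controls $|A_n|$ on the circle, and your intermediate attempts (bounding $(t+\tfrac14)_n$ by $(n+1)!$, or estimating $F_{1/4}$ on its own) overshoot precisely because they decouple the $k$-dependence of numerator and denominator.

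The fix is the paper's simple device, which needs no Stirling and no convexity in $k$: on $|t+k|=r$ with $r$ small (say $r=\tfrac18$), each numerator Pochhammer satisfies $\bigl|\bigl(t+\tfrac14\bigr)_n\bigr|\leqslant \prod_{j<k}\bigl(k-j+r+\tfrac14\bigr)\prod_{j\geqslant k}\bigl(j-k+r+\tfrac14\bigr)\leqslant k!\,(n-k)!$, and likewise for $\bigl(t+\tfrac34\bigr)_n$, while the denominator satisfies $|(t)_{n+1}|\geqslant r\prod_{j\neq k}\bigl(|j-k|-r\bigr)\geqslant k!\,(n-k)!/(100n^2)$. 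Since these bounds carry the \emph{same} $k$-dependence, the factorials cancel pointwise and $\bigl|\bigl(t+\tfrac14\bigr)_n^{s+2}\bigl(t+\tfrac34\bigr)_n^{s+2}(t)_{n+1}^{-2s-4}\bigr|\leqslant n^{O(1)}$ on the circle, hence $|A_n(t)|\leqslant 2^{(6s+12)n}\,n^{O(1)}$ uniformly in $k$, and your Cauchy estimate gives $\max_{i,k}|a_{n,i,k}|\leqslant 2^{(6s+12+o(1))n}$. With this single estimate supplied, the remainder of your plan — feeding the bound into \eqref{def_rho_0} and \eqref{def_rho_i}, and running the identical, easier argument for $B_n$ with prefactor $2^{(3s+6)n}$ to get \eqref{sigma_est} — is exactly the paper's proof of \eqref{rho_est} and \eqref{sigma_est}.
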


\begin{proof}
	We only prove \eqref{rho_est}. The proof of \eqref{sigma_est} is similar. By \eqref{def_rho_0} and \eqref{def_rho_i}, we have
	\[ \max_{0 \leqslant i \leqslant 2s+4} |\rho_{n,i}| \leqslant (3s+5)!4^{3s+5} (n+1)^2 \max_{1\leqslant i \leqslant 2s+4 \atop 0 \leqslant k \leqslant n} |a_{n,i,k}|. \]
	It suffices to show that
	\[ \max_{i,k} |a_{n,i,k}| \leqslant 2^{(6s+12+o(1))n}. \]
	
By Cauchy's integral formula, we have
\[ a_{n,i,k} = \frac{1}{2\pi\sqrt{-1}}\int_{|t+k|=\frac{1}{8}} (t+k)^{i-1} A_n(t) \mathrm{d}t, \]
and hence
\begin{align}
|a_{n,i,k}| &\leqslant 2^{(6s+12)n} \sup_{|t+k|=\frac{1}{8}} \left| (t+k)^{i-1}(4t+2n)^{\delta} \frac{\left(t+\frac{1}{4}\right)_{n}^{s+2}\left(t+\frac{3}{4}\right)_{n}^{s+2}}{(t)_{n+1}^{2s+4}}\right| \notag\\
&\leqslant 2^{(6s+12)n} \cdot (10n) \sup_{|t+k|=\frac{1}{8}} \left| \left(t+\frac{1}{4}\right)_{n}^{s+2}\left(t+\frac{3}{4}\right)_{n}^{s+2} (t)_{n+1}^{-2s-4}\right|. \label{eqn_5_4}
\end{align}
In the following, we estimate each term in \eqref{eqn_5_4}. We have obviously
\begin{align*}
	\sup_{|t+k|=\frac{1}{8}} \left| \left(t+\frac{1}{4}\right)_{n} \right| &\leqslant k!(n-k)!, \\
	\sup_{|t+k|=\frac{1}{8}} \left| \left(t+\frac{3}{4}\right)_{n} \right| &\leqslant k!(n-k)!, \\
	\sup_{|t+k|=\frac{1}{8}} \left| \left(t\right)_{n+1}^{-1} \right| &\leqslant \frac{100n^2}{k!(n-k)!}.
\end{align*}
Therefore, 
\[ \max_{i,k}|a_{n,i,k}| \leqslant 2^{(6s+12)n} (10n) (100n^2)^{2s+4},  \]
as desired.
\end{proof}

\section{$2$-adic norm of the linear forms}

\begin{lemma}\label{Lem_5_1}
	Let $m \geqslant 2$ be an integer and $n = 2^m-1$. Let $k_0 = 2^{m-1}$. Then, for any integer $k$ such that $1 \leqslant k \leqslant n$ and $k \neq k_0$, we have
	\[v_2((k-1)!(n-k)!) \geqslant v_2((k_0-1)!(n-k_0)!) + 1.\]
\end{lemma}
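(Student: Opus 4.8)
The plan is to estimate $v_2$ of the factorials using Legendre's formula, $v_2(m!) = m - s_2(m)$, where $s_2(m)$ denotes the binary digit sum of $m$. Writing $n = 2^m - 1$, whose binary expansion is $\underbrace{11\cdots1}_{m}$, we have $s_2(n) = m$. For a given $k$ with $1 \leqslant k \leqslant n$, set $a = k-1$ and $b = n-k$, so that $a + b = n - 1 = 2^m - 2$. Then
\[
v_2((k-1)!(n-k)!) = v_2(a!) + v_2(b!) = (a - s_2(a)) + (b - s_2(b)) = (2^m - 2) - s_2(a) - s_2(b).
\]
Thus the claim is equivalent to showing $s_2(k_0-1) + s_2(n-k_0) \geqslant s_2(k-1) + s_2(n-k) + 1$ for all admissible $k \neq k_0$; that is, $k_0 = 2^{m-1}$ strictly minimizes the quantity $s_2(k-1) + s_2(n-k)$.

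The main step is to identify this minimum. Since $(k-1) + (n-k) = 2^m - 2$, I would invoke the standard fact (a form of Kummer's theorem) that for nonnegative integers $a, b$ with fixed sum $N$, one has $s_2(a) + s_2(b) = s_2(N) + 2 c(a,b)$, where $c(a,b)$ is the number of carries when adding $a$ and $b$ in base $2$. Here $N = 2^m - 2 = \underbrace{11\cdots1}_{m-1}0$ in binary, so $s_2(N) = m-1$, and therefore $s_2(k-1)+s_2(n-k)$ is minimized exactly when the number of carries $c(k-1, n-k)$ is minimized. When $k = k_0 = 2^{m-1}$ we get $k-1 = 2^{m-1}-1 = \underbrace{11\cdots1}_{m-1}$ and $n - k = 2^{m-1}-1 = \underbrace{11\cdots1}_{m-1}$, whose sum produces carries in every one of the $m-1$ positions, giving $c = m-1$ — which looks like the \emph{maximum}, not the minimum. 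So instead I would argue directly: the minimum of $s_2(a)+s_2(b)$ over $a+b = 2^m-2$ is achieved by a ``carry-free'' split, e.g. $a = 2^m - 2$, $b = 0$, giving value $m-1$; but such splits correspond to $k \in \{1, n\}$, and then $\min = m - 1$, which is \emph{smaller} than $v_2$ at $k_0$. This indicates I have the inequality backwards, so let me reconsider: the lemma asserts $k_0$ gives the \emph{smallest} value of $v_2((k-1)!(n-k)!)$, i.e. $k_0$ \emph{maximizes} $s_2(k-1)+s_2(n-k)$, i.e. \emph{maximizes} the carry count $c(k-1,n-k)$.

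So the correct route is: show $c(k-1, n-k) \leqslant m-1$ for every $k \in [1,n]$, with equality iff $k = k_0$. Writing $a = k - 1 \in [0, 2^m-2]$ and $b = n - k = 2^m - 2 - a$, the number of carries in computing $a + b$ is at most the number of bit positions, but more precisely, since $a + b = 2^m - 2$ has a $0$ in position $0$, there is no carry out of the top; a clean way is to note $c(a,b) = \tfrac12\big(s_2(a)+s_2(b) - s_2(a+b)\big) \leqslant \tfrac12\big((m) + (m) - (m-1)\big)$ using $s_2(a), s_2(b) \leqslant m$, but this bound $\tfrac{m+1}{2}$ is too weak — I need the sharp bound. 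I would instead proceed by a direct digit analysis: since $a + b = \underbrace{1\cdots1}_{m-1}0$, a carry into position $0$ is impossible, so bit $0$ of $a$ equals bit $0$ of $b$, both $0$ — wait, $0 + 0 = 0$ with no carry, or $1+1 = 0$ carry $1$; the resulting bit is $0$ so either both are $0$ (no carry generated) or both are $1$ (carry generated). Propagating this, I expect to find that the carry sequence is forced once one knows where the ``both $1$'' positions are, and a counting argument shows the total is maximized precisely when $a = b = 2^{m-1}-1$. The main obstacle is getting this digit/carry bookkeeping exactly right and handling the boundary positions carefully; once the claim ``$c(k-1,n-k) \leqslant m-1$ with equality iff $k = 2^{m-1}$'' is established, the desired inequality $v_2((k-1)!(n-k)!) \geqslant v_2((k_0-1)!(n-k_0)!) + 1$ follows immediately from Legendre's formula as displayed above.
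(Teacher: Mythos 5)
Your reduction via Legendre's formula is the same first step as the paper's, but the proposal has a genuine gap: the central claim --- that $\operatorname{sod}_2(k-1)+\operatorname{sod}_2(n-k)$ (equivalently, the number of carries in the binary addition $(k-1)+(n-k)$) attains its maximum $2m-2$ precisely at $k=k_0=2^{m-1}$ and is strictly smaller for every other $k$ --- is never actually proved. You explicitly defer it (``I expect to find\dots'', ``the main obstacle is getting this digit/carry bookkeeping exactly right''), but that bookkeeping \emph{is} the lemma; everything before it is routine rewriting. There is also a factual slip in the tool you invoke: in base $2$ the carry identity is $c(a,b)=\operatorname{sod}_2(a)+\operatorname{sod}_2(b)-\operatorname{sod}_2(a+b)$ (the general formula divides by $p-1$, which is $1$ here), not $\tfrac12\bigl(\operatorname{sod}_2(a)+\operatorname{sod}_2(b)-\operatorname{sod}_2(a+b)\bigr)$; with your factor $\tfrac12$ the bound ``$c\leqslant m-1$ with equality iff $k=k_0$'' would not even be equivalent to the inequality you reduced the lemma to.

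The missing step can be closed without any carry analysis, and this is how the paper does it: a nonnegative integer $a$ has exactly $\operatorname{sod}_2(a)$ ones in its binary expansion, so $a\geqslant 2^{\operatorname{sod}_2(a)}-1$; hence every $a\in[0,2^m-2]$ satisfies $\operatorname{sod}_2(a)\leqslant m-1$, and if in addition $a\leqslant 2^{m-1}-1$ and $\operatorname{sod}_2(a)=m-1$, then $a=2^{m-1}-1$. Applying the first bound to both $k-1$ and $n-k$ gives $\operatorname{sod}_2(k-1)+\operatorname{sod}_2(n-k)\leqslant 2m-2$; in case of equality both digit sums equal $m-1$, and since $\min\{k-1,n-k\}\leqslant 2^{m-1}-1$ the second observation forces $\min\{k-1,n-k\}=2^{m-1}-1$, i.e.\ $k=k_0$. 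Thus for $k\neq k_0$ the sum is at most $2m-3$, which is exactly the strict inequality you needed. If you prefer the carry formulation, you must supply an argument of comparable precision showing $c(k-1,n-k)\leqslant m-1$ with equality only at $k=k_0$; as written, that statement is asserted, not established.
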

\begin{proof}
	It is well-known that $v_2(a!) = a - \operatorname{sod}_2(a)$ for any nonnegative integer $a$, where $\operatorname{sod}_2(a)$ is the sum of digits of $a$ in its binary expansion. We have
	\begin{align*}
		v_2((k-1)!(n-k)!) &= k-1-\operatorname{sod}_2(k-1) + n-k - \operatorname{sod}_2(n-k) \\
		&= n-1-\operatorname{sod}_2(k-1)-\operatorname{sod}_2(n-k).
	\end{align*}
It suffices to prove that
\[\operatorname{sod}_2(k-1)+\operatorname{sod}_2(n-k) \leqslant \operatorname{sod}_2(k_0-1)+\operatorname{sod}_2(n-k_0) - 1 = 2m-3\]
for $k \neq k_0$. 

For any nonnegative integer $a$, we have $a \geqslant 1+2+2^2+\cdots+2^{\operatorname{sod}_2(a)-1}=2^{\operatorname{sod}_2(a)}-1$ because the number of $1$'s in the binary expansion of $a$ is exactly $\operatorname{sod}_2(a)$. In particular:
\begin{itemize}
	\item $\operatorname{sod}_2(a) \leqslant m-1$ for any integer $a \in [0,2^{m}-2]$.
	\item If $\operatorname{sod}_2(a) = m-1$ and $a \in [0,2^{m-1}-1]$, then $a = 2^{m-1}-1$.
\end{itemize}
Now, since both $k-1$ and $n-k$ belong to $[0,2^{m}-2]$, we have $\operatorname{sod}_2(k-1)+\operatorname{sod}_2(n-k) \leqslant (m-1)+(m-1) = 2m-2$. If the equality holds, then $\operatorname{sod}_2(k-1) = \operatorname{sod}_2(n-k) = m-1$. By noticing that $\min\{k-1,n-k\} \in [0,2^{m-1}-1]$, the equality $\operatorname{sod}_2(k-1) = \operatorname{sod}_2(n-k) = m-1$ implies $\min\{k-1,n-k\} = 2^{m-1}-1 \Rightarrow k=k_0$. So we have 
\[\operatorname{sod}_2(k-1)+\operatorname{sod}_2(n-k) < 2m-2\]
for $k \neq k_0$, as desired.  
\end{proof}

\begin{lemma}\label{Lem_nonvanishing_A}
	For any integer $n$ of the form $n=2^m-1$ with $m \geqslant 2$, we have
	\[v_2\left( \Phi_n^{-s-2}d_n^{3s+5} S_n \right) = (10s+20)n + (s+2)m + 2s +v_2((s+2)!)+2.\]
	In particular, $\Phi_n^{-s-2}d_n^{3s+5}S_n \neq 0$ and 
	\[ \left| \Phi_n^{-s-2}d_n^{3s+5}S_n \right|_2 = 2^{(-10s-20+o(1))n} \]
	as $n =2^m-1 \rightarrow \infty$.
\end{lemma}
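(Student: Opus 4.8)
The plan is to reduce everything to a purely $2$-adic statement about $S_n$ itself. Since $\Phi_n$ is a product of odd primes we have $v_2(\Phi_n^{-s-2})=0$, and since $2^{m-1}\leqslant n<2^m$ we have $v_2(d_n)=\lfloor\log_2 n\rfloor=m-1$, hence $v_2(d_n^{3s+5})=(3s+5)(m-1)$. Thus the asserted identity is equivalent to
\[ v_2(S_n)=(10s+20)n-(2s+3)m+5s+7+v_2((s+2)!), \]
and once this is established the ``in particular'' part follows immediately: as $m=\log_2(n+1)=o(n)$, we get $v_2\bigl(\Phi_n^{-s-2}d_n^{3s+5}S_n\bigr)=(10s+20)n+o(n)$, so $S_n\neq 0$ and $\bigl|\Phi_n^{-s-2}d_n^{3s+5}S_n\bigr|_2=2^{(-10s-20+o(1))n}$.

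To compute $v_2(S_n)$ I would first put $A_n(t+1/4)$ into a form adapted to Lemmas~\ref{Lem_2_3} and~\ref{Lem_2_4}. After the substitution $t\mapsto t+1/4$ the factor $(t+3/4)_n$ becomes $n!\binom{t+n}{n}$, the product $(t+1/4)_n(t+3/4)_n$ becomes $(t+1/2)_n(t+1)_n=\frac{(2n)!}{4^n}\binom{2t+2n}{2n}$ (since $\prod_{j=1}^{2n}(t+\tfrac{j}{2})=4^{-n}(2t+1)(2t+2)\cdots(2t+2n)$), and $(t)_{n+1}^{-1}$ becomes $\frac{n!\,4^{n+1}}{\prod_{j=0}^{n}(4t+4j+1)}$, which is a fixed power of $2$ times an everywhere-$2$-adic-unit strictly differentiable function on $\mathbb{Z}_2$. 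A short computation then gives
\[ A_n\!\left(t+\tfrac14\right)=(4t+2n+1)^{\delta}\bigl(2^{4n}\tbinom{2n}{n}\bigr)^{s+2}\binom{2t+2n}{2n}^{s+2}\left(\frac{n!\,4^{n+1}}{\prod_{j=0}^{n}(4t+4j+1)}\right)^{2s+4}. \]
Applying the generalized Leibniz rule to the $s$-th derivative writes $A_n^{(s)}(t+1/4)=\sum_{\underline{i}\in I}f_{\underline{i}}(t)$, where $\underline{i}$ records how the $s$ derivatives are distributed among $(4t+2n+1)^{\delta}$, the $s+2$ copies of $\binom{2t+2n}{2n}$ and the $2s+4$ copies of $\frac{n!\,4^{n+1}}{\prod_j(4t+4j+1)}$ (the constant factor absorbs no derivative), so that $S_n=\sum_{\underline{i}\in I}\int_{\mathbb{Z}_2}f_{\underline{i}}(t)\,\mathrm{d}t$.

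The next step is to single out a dominant index $\underline{i}^{*}$ and prove \eqref{nonv}. Here the main tool is the congruence \eqref{eqn_2_3} with exponent $m$, so that $2^m=n+1$ and
\[ \int_{\mathbb{Z}_2}f_{\underline{i}^{*}}(t)\,\mathrm{d}t\equiv\frac{1}{n+1}\sum_{k=0}^{n}f_{\underline{i}^{*}}(k)\pmod{2^{\triangle_m(f_{\underline{i}^{*}})-1}\mathbb{Z}_2}. \]
Each value $f_{\underline{i}^{*}}(k)$ is a completely explicit product of binomial coefficients, factorials and harmonic-type sums (the last coming from differentiating $\binom{2t+2n}{2n}$ and $1/\prod_j(4t+4j+1)$), so its $2$-adic valuation can be written down from $v_2(a!)=a-\operatorname{sod}_2(a)$. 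Lemma~\ref{Lem_5_1} is precisely what is needed to show that $v_2(f_{\underline{i}^{*}}(k))$, as a function of $k\in[0,n]$, attains its minimum at a \emph{single} value of $k$, so that no $2$-adic cancellation occurs in the average; evaluating there yields $v_2\bigl(\tfrac1{n+1}\sum_k f_{\underline{i}^{*}}(k)\bigr)$ equal to the asserted value, and comparing it with the lower bound $\triangle_m(f_{\underline{i}^{*}})-1$ — which is much larger, by Lemma~\ref{Lem_2_4}, in particular thanks to the ``$f^p$'' refinement in part~(3) applied to powers of $\binom{2t+2n}{2n}$ — shows that the average already pins down $v_2\bigl(\int_{\mathbb{Z}_2}f_{\underline{i}^{*}}\bigr)$. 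This gives \eqref{nonv} together with the exact valuation of the dominant term.

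It remains to prove \eqref{domi}. For each $\underline{i}\neq\underline{i}^{*}$ I would bound $v_2\bigl(\int_{\mathbb{Z}_2}f_{\underline{i}}\bigr)\geqslant\triangle(f_{\underline{i}})-1$ by \eqref{eqn_2_2} and estimate $\triangle(f_{\underline{i}})$ from below via the multiplicativity and the binomial-coefficient bounds of Lemma~\ref{Lem_2_4}. The mechanism is that moving any derivative away from its position in $\underline{i}^{*}$, or distributing the powers of $\binom{2t+2n}{2n}$ and of $1/\prod_j(4t+4j+1)$ differently, always costs at least one extra factor of $2$ — here again Lemma~\ref{Lem_2_4}(3) and a second application of the digit-sum inequality of Lemma~\ref{Lem_5_1} supply the gain — so that $v_2\bigl(\int f_{\underline{i}}\bigr)>v_2\bigl(\int f_{\underline{i}^{*}}\bigr)$ for all $\underline{i}\neq\underline{i}^{*}$. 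Then $v_2(S_n)=v_2\bigl(\int f_{\underline{i}^{*}}\bigr)$ is the asserted value. I expect the bookkeeping behind \eqref{domi}, together with the precise evaluation at the minimizing $k$ in \eqref{nonv}, to be the main obstacle: both require carefully tracking digit sums of $k$, $n-k$, $4k$, $4n-4k$ and of the factorials introduced by differentiation, and ruling out accidental cancellations among terms of equal $2$-adic valuation.
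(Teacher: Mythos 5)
Your opening reduction is fine and matches the paper: $v_2(\Phi_n)=0$, $v_2(d_n)=m-1$, and the whole lemma is equivalent to an exact formula for $v_2(S_n)$, with the asymptotic statement following since $m=o(n)$. The normalization of $A_n(t+1/4)$ you write down is also correct. But from that point on the proposal is a plan rather than a proof, and the plan has two concrete problems. First, the decomposition you choose is ill-suited to the available tools: by fusing $(t+1/2)_n(t+1)_n$ into $\binom{2t+2n}{2n}^{s+2}$ and letting the Leibniz rule distribute the $s$ derivatives onto copies of $\binom{2t+2n}{2n}$ and of $1/\prod_j(4t+4j+1)$, every term of your expansion contains \emph{derivatives} of these binomial-type functions, i.e. harmonic-type sums over both even and odd arguments, whose $2$-adic valuations are exactly the quantities that are hard to control; Lemma \ref{Lem_2_4}(3) applies to $\binom{t+j}{n}$ itself, not to its derivatives, so the toolkit you cite does not cover your building blocks. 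The paper's decomposition is designed to avoid precisely this: it splits off the integer-rooted part as $n$ separate factors $(t+k)^{s+2}$, so derivatives only lower exponents of polynomials (re-expressed via $\binom{t+k-1}{k-1}$, $\binom{t+n}{n-k}$, $\binom{t+n}{n}$ with explicit factorial constants) or hit the unit-valued series $g$ (with $g^{(j)}/j!$ still handled by Lemma \ref{Lem_2_4}(1)); no harmonic sums ever appear.

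Second, the two key steps \eqref{nonv} and \eqref{domi} are never actually carried out: you do not identify the dominant index $\underline{i}^{*}$, you do not compute the exact valuation of its Volkenborn integral, and you explicitly defer the strict domination of all other indices ("I expect the bookkeeping \ldots to be the main obstacle", "ruling out accidental cancellations"). You also assign Lemma \ref{Lem_5_1} the wrong role: in the paper it controls the factorial constants $(k-1)!(n-k)!$ attached to \emph{which} linear factor absorbs the derivatives (showing the index with all $s$ derivatives on $(t+2^{m-1})^{s+2}$ is the unique minimizer, and every other index gains at least one factor of $2$), whereas the non-cancellation inside the dominant term's level-$m$ Riemann sum comes from a different mechanism entirely: $\binom{k+2^m-1}{2^m-1}$ is even for $1\leqslant k\leqslant 2^m-1$ (Kummer/Lucas) and appears squared, so only the $k=0$ term is odd and the integral is $\equiv 2^{-m}\pmod{2^{-m+1}\mathbb{Z}_2}$, the error being controlled by $\triangle_m\geqslant -m+2$ via the $f^p$ refinement of Lemma \ref{Lem_2_4}(3). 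Since these are exactly the points the lemma is about, the proposal as it stands has a genuine gap: it would need to be reworked along the paper's decomposition (or an equivalent one in which derivatives never hit binomial-coefficient functions) before the valuation computation and the domination argument can be completed.
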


\begin{proof}
	Obviously $v_2(\Phi_n) = 0$ and $v_2(d_n)=m-1$. It suffices to prove that
	\begin{equation*}
		v_2(S_n) = (10s+20)n - (2s+3)m+5s+v_2((s+2)!)+7.
	\end{equation*}
Note that 
\[
	A_n\left(t+\frac{1}{4}\right) = 2^{(9s+18)n+4s+8} \cdot f(t),
\]
where 
\[ f(t) = (t+1)^{s+2}(t+2)^{s+2}\cdots(t+n)^{s+2}g(t)\]
and 
\[g(t) = (4t+2n+1)^{\delta} \cdot \frac{\prod_{k=0}^{n-1}(2t+2k+1)^{s+2}}{\prod_{k=0}^{n}(4t+4k+1)^{2s+4}}. \]
So we have
\[
S_n = 2^{(9s+18)n+4s+8} \cdot \int_{\mathbb{Z}_2} f^{(s)}(t)\mathrm{d}t
\]
and it suffices to prove that 
\begin{equation}\label{eqn_2-adic-order-esti}
	v_2\left( \int_{\mathbb{Z}_2} f^{(s)}(t)\mathrm{d}t \right) = (s+2)n -(2s+3)m + s +v_2((s+2)!) -1.
\end{equation}

We define the index set $I$ by
\[I = \left\{ (i_1,\ldots,i_n,j) \in \left(\mathbb{Z}_{\geqslant 0}\right)^{n+1}  ~\mid~ i_1+\cdots+i_n+j = s \right\}.\]
Applying the Leibniz rule, we have
\begin{align}
	f^{(s)}(t) =& \sum_{(i_1,\ldots,i_n,j) \in I} \frac{s!}{i_1! \cdots i_n!} \cdot \left((t+1)^{s+2}\right)^{(i_1)} \cdots \left((t+n)^{s+2}\right)^{(i_n)} \cdot \frac{g^{(j)}(t)}{j!} \notag\\
	=& \sum_{(i_1,\ldots,i_n,j) \in I} \frac{s!}{i_1! \cdots i_n!} \cdot \frac{(s+2)!}{(s+2-i_1)!} \cdots \frac{(s+2)!}{(s+2-i_n)!} \cdot (t+1)^{s+2-i_1} \cdots (t+n)^{s+2-i_{n}} \cdot \frac{g^{(j)}(t)}{j!} \notag\\
	=& \sum_{(i_1,\ldots,i_n,j) \in I} s!\binom{s+2}{i_1} \cdots \binom{s+2}{i_n} \cdot (t+1)^{s+2-i_1} \cdots (t+n)^{s+2-i_n} \cdot \frac{g^{(j)}(t)}{j!} \notag\\
	=& \sum_{(i_1,\ldots,i_n,j) \in I} f_{(i_1,\ldots,i_n,j)}(t), \label{eqn_Sum}
\end{align}
where
\begin{align*}
	f_{(i_1,\ldots,i_n,j)}(t) =& ~ s!\binom{s+2}{i_1} \cdots \binom{s+2}{i_n} \cdot \frac{g^{(j)}(t)}{j!} \cdot n!^{2+j} \binom{t+n}{n}^{2+j} \notag\\
	&\qquad\qquad \times \prod_{k=1}^{n} \left((k-1)!(n-k)! \binom{t+k-1}{k-1}\binom{t+n}{n-k}\right)^{i_k}. 
\end{align*}
We will show that the term corresponding to the index $(0,\ldots,0,i_{2^{m-1}}=s,0,\ldots,0)$ in \eqref{eqn_Sum} dominates the $2$-adic norm of the Volkenborn integral of $f^{(s)}(t)$. More precisely, we will prove that
\begin{equation}\label{eqn_dominating_term}
	v_2\left(\int_{\mathbb{Z}_2} f_{(0,\ldots,0,i_{2^{m-1}}=s,0,\ldots,0)}(t) \mathrm{d}t\right) = (s+2)n -(2s+3)m + s +v_2((s+2)!) -1,
\end{equation}
and 
\begin{equation}\label{eqn_other_terms}
	v_2\left(\int_{\mathbb{Z}_2} f_{(i_1,\ldots,i_n,j)}(t) \mathrm{d}t\right) \geqslant (s+2)n -(2s+3)m + s +v_2((s+2)!) 
\end{equation}
for any $(i_1,\ldots,i_n,j) \in I$ with $i_{2^{m-1}} \neq s$.

Once \eqref{eqn_dominating_term} and \eqref{eqn_other_terms} have been established, they together with \eqref{eqn_Sum} will imply \eqref{eqn_2-adic-order-esti}. Then the proof of Lemma \ref{Lem_nonvanishing_A} will be complete.

\medskip

We first prove \eqref{eqn_dominating_term}. We have
\begin{align*}
&f_{(0,\ldots,0,i_{2^{m-1}}=s,0,\ldots,0)}(t) \\
=&~ \frac{(s+2)!}{2} \cdot (2^m-1)!^2 \cdot (2^{m-1}-1)!^{2s} \cdot g(t) \cdot \binom{t+2^m-1}{2^m-1}^2 \binom{t+2^{m-1}-1}{2^{m-1}-1}^{s}\binom{t+2^m-1}{2^{m-1}-1}^{s}. 
\end{align*}
We have trivially $v_2((2^m-1)!) = n-m$ and $v_2((2^{m-1}-1)!) = \frac{n}{2}-m+\frac{1}{2}$. Therefore, \eqref{eqn_dominating_term} is reduced to
\begin{equation}\label{eqn_5_1}
	v_2\left( \int_{\mathbb{Z}_2} g(t)  \binom{t+2^m-1}{2^m-1}^2 \binom{t+2^{m-1}-1}{2^{m-1}-1}^{s}\binom{t+2^m-1}{2^{m-1}-1}^{s} \mathrm{d}t \right) = -m.
\end{equation}

Clearly, we can express $g(t)$ as a power series
\[g(t) = \sum_{k=0}^{\infty} c_k t^k, \]
where $c_k \in 2^{k}\mathbb{Z}_2$ for every $k \geqslant 0$. Then 
\[\frac{g^{(j)}(t)}{j!} = \sum_{k=j}^{
\infty} c_k \binom{k}{j} t^{k-j}, \]
and we have
\begin{equation}\label{eqn_5_2}
\triangle_m\left( \frac{g^{(j)}(t)}{j!} \right) \geqslant \triangle\left( \frac{g^{(j)}(t)}{j!} \right) \geqslant 0  \quad\text{for every~} j \geqslant 0
\end{equation}
by Lemma \ref{Lem_2_4} (1).

By Lemma \ref{Lem_2_4} (3), we have
\begin{align*}
	\triangle_m\left( \binom{t+2^m-1}{2^m-1}^2 \right) &\geqslant -(m-1)+1=-m+2, \\
	\triangle_m\left( \binom{t+2^{m-1}-1}{2^{m-1}-1} \right) &\geqslant -(m-2)=-m+2, \\
	\triangle_m\left( \binom{t+2^m-1}{2^{m-1}-1} \right) &\geqslant -(m-2)=-m+2.
\end{align*}
Then, by Lemma \ref{Lem_2_4} (2) we have 
\[ \triangle_m\left( g(t)  \binom{t+2^m-1}{2^m-1}^2 \binom{t+2^{m-1}-1}{2^{m-1}-1}^{s}\binom{t+2^m-1}{2^{m-1}-1}^{s}  \right) \geqslant -m+2. \]
Therefore, Lemma \ref{Lem_2_3} implies that
\begin{align*}
 &\int_{\mathbb{Z}_2} g(t)  \binom{t+2^m-1}{2^m-1}^2 \binom{t+2^{m-1}-1}{2^{m-1}-1}^{s}\binom{t+2^m-1}{2^{m-1}-1}^{s} \mathrm{d}t \\
 \equiv&~ \frac{1}{2^{m}} \sum_{k=0}^{2^m-1} g(k)\binom{k+2^m-1}{2^m-1}^2 \binom{k+2^{m-1}-1}{2^{m-1}-1}^{s}\binom{k+2^m-1}{2^{m-1}-1}^{s} \pmod{2^{-m+1}\mathbb{Z}_2}.
\end{align*}
Note that $\binom{k+2^m-1}{2^m-1}$ is even for $1 \leqslant k \leqslant 2^m-1$ by either Kummer's or Lucas' theorem and $g(0) \equiv 1 \pmod{2\mathbb{Z}_2}$. We obtain
\[\int_{\mathbb{Z}_2} g(t)  \binom{t+2^m-1}{2^m-1}^2 \binom{t+2^{m-1}-1}{2^{m-1}-1}^{s}\binom{t+2^m-1}{2^{m-1}-1}^{s} \mathrm{d}t \equiv 2^{-m} \pmod{2^{-m+1}\mathbb{Z}_2},\]
which proves \eqref{eqn_5_1}. 

At last, we prove \eqref{eqn_other_terms}. Fix $(i_1,\ldots,i_n,j) \in I$ with $i_{2^{m-1}} \neq s$. Again by Lemma \ref{Lem_2_4} (3), for any integer $k \in [1,n]$ we have
\begin{align*}
	\triangle\left( \binom{t+n}{n} \right) \geqslant -m+1, \\
	\triangle\left( \binom{t+k-1}{k-1} \right) \geqslant -m+1, \\
	\triangle\left( \binom{t+n}{n-k} \right) \geqslant -m+1.
\end{align*}
They together with \eqref{eqn_5_2} and Lemma \ref{Lem_2_4} (2) imply that 
\[ \triangle\left( \frac{g^{(j)}(t)}{j!} \cdot \binom{t+n}{n}^{2+j} \prod_{k=1}^{n} \binom{t+k-1}{k-1}^{i_k}\binom{t+n}{n-k}^{i_k} \right) \geqslant -m+1. \]
By Lemma \ref{Lem_2_3}, we have
\[ v_2\left( \int_{\mathbb{Z}_2} \frac{g^{(j)}(t)}{j!} \cdot \binom{t+n}{n}^{2+j} \prod_{k=1}^{n} \binom{t+k-1}{k-1}^{i_k}\binom{t+n}{n-k}^{i_k} \mathrm{d}t \right) \geqslant -m. \]
Therefore, to prove \eqref{eqn_other_terms}, it suffices to show 
\[ v_2\left( s! \binom{s+2}{i_1} \cdots \binom{s+2}{i_n} n!^{2+j} \prod_{k=1}^{n} \left((k-1)!(n-k)!\right)^{i_k}\right) \geqslant (s+2)n -(2s+2)m + s + v_2((s+2)!). \]
Since the binomial coefficients are integers, it suffices to prove that
\begin{equation}\label{eqn_5_3}
 v_2\left( s! \binom{s+2}{i_{*}} n!^{2+j} \prod_{k=1}^{n} \left((k-1)!(n-k)!\right)^{i_k}\right) \geqslant (s+2)n -(2s+2)m + s + v_2((s+2)!), 
\end{equation}
where $i_{*} = i_{2^{m-1}}$. Recall $i_{*} \neq s$.

By Lemma \ref{Lem_5_1}, we have
\[ v_2\left(  (k-1)!(n-k)! \right) \begin{cases}
	= n-2m+1, &\text{if~} k=2^{m-1}, \\
	\geqslant n-2m+2, &\text{if~} k \neq 2^{m-1}.
\end{cases} \]
Therefore,
\begin{align*}
	&v_2\left(  n!^{2+j} \prod_{k=1}^{n} \left((k-1)!(n-k)!\right)^{i_k}\right) \\
	\geqslant& (2+j)(n-m) + (i_1+\cdots+i_n)(n-2m+2) - i_{*} \\
	=& (2+j)(m-2) + (i_1+\cdots+i_n+j+2)(n-2m+2)-i_{*} \\
	\geqslant& 2(m-2) + (s+2)(n-2m+2) - i_{*}.
\end{align*}
To prove \eqref{eqn_5_3}, it suffices to show that
\[ v_2\left(s!\binom{s+2}{i_{*}}\right) + s -i_{*} -v_2((s+2)!) \geqslant 0. \]
Write $u = s-i_{*}$, then $1 \leqslant u \leqslant s$. It suffices to show that
\[ u + v_2\left( \frac{s!}{(s-u)!(u+2)!} \right) \geqslant 0. \]
Since $\binom{s}{s-u} \in \mathbb{Z}$, it suffices to prove that $u+v_2(u!/(u+2)!) \geqslant 0$, i.e.,
\[ u \geqslant v_2((u+2)(u+1)). \]
This is indeed true for any integer $u \geqslant 1$. In fact, for odd $u \geqslant 1$ we have $v_2((u+2)(u+1)) = v_2(u+1) \leqslant \frac{\log(u+1)}{\log 2} \leqslant u$; for even $u \geqslant 2$ we have $v_2((u+2)(u+1)) = v_2(u+2) \leqslant \frac{\log(u+2)}{\log 2} \leqslant u$.

The proof of Lemma \ref{Lem_nonvanishing_A} is complete.  
\end{proof}

\bigskip

\begin{lemma}\label{Lem_nonvanishing_B}
	For any integer $n$ of the form $n=2^m-1$ with $m \geqslant 2$, we have
	\[v_2\left( d_n^{2s+3} T_n \right) = (6s+12)n + s +v_2((s+2)!). \]
	In particular, $d_n^{2s+3}T_n \neq 0$ and 
	\[ \left| d_n^{2s+3}T_n \right|_2 = 2^{(-6s-12+o(1))n} \]
	as $n =2^m-1 \rightarrow \infty$.
\end{lemma}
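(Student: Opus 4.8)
The plan is to mirror the proof of Lemma \ref{Lem_nonvanishing_A} but with the considerably simpler rational function $B_n$. First I would unwind the translation $B_n(t+1/4)$ and extract the power of $2$ from the numerator and denominator, writing
\[
B_n\left(t+\frac{1}{4}\right) = 2^{(3s+6)n}\cdot\frac{\left(t+1\right)_n^{s+2}}{\left(t+\frac14\right)_{n+1}^{s+2}} = 2^{c}\cdot h(t),
\]
where the exponent $c$ collects the $2^{3n}$-per-Pochhammer contribution (so $c = (3s+6)n + 4s+8$ type bookkeeping, to be pinned down exactly), and $h(t) = (t+1)^{s+2}\cdots(t+n)^{s+2} g_B(t)$ with $g_B(t)$ a ratio of odd-linear factors $\prod(4t+4k+1)^{-(s+2)}$ (no $\delta$-factor, no $3/4$-Pochhammer, and only one copy of the denominator product). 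Since $v_2(d_n) = m-1$ and $v_2(\Phi_n)$ does not appear here, the claimed formula reduces to computing $v_2\!\left(\int_{\mathbb{Z}_2} h^{(s)}(t)\,\mathrm dt\right)$.

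Next I would apply the Leibniz rule to $h^{(s)}(t)$ exactly as in \eqref{eqn_Sum}, indexing by $(i_1,\dots,i_n,j)\in I$ with $i_1+\cdots+i_n+j=s$, so that
\[
f_{(i_1,\dots,i_n,j)}(t) = s!\binom{s+2}{i_1}\cdots\binom{s+2}{i_n}\cdot\frac{g_B^{(j)}(t)}{j!}\cdot n!^{1+j}\binom{t+n}{n}^{1+j}\prod_{k=1}^{n}\Bigl((k-1)!(n-k)!\binom{t+k-1}{k-1}\binom{t+n}{n-k}\Bigr)^{i_k},
\]
the only structural change from Lemma \ref{Lem_nonvanishing_A} being that the exponent $2+j$ on the $\binom{t+n}{n}$-block becomes $1+j$ (single denominator copy). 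Then I claim the dominating term is again $(0,\dots,0,i_{2^{m-1}}=s,0,\dots,0)$. For that term I would use $g_B(0)\equiv 1\pmod 2$, the fact that $\binom{t+n}{n}$ with $n=2^m-1$ contributes $\triangle_m \geq -(m-1)$ (only one copy now, rather than $-(m-1)+1$ for the squared copy), and that $\binom{t+2^{m-1}-1}{2^{m-1}-1}^s$ and $\binom{t+2^m-1}{2^{m-1}-1}^s$ each contribute $\triangle_m \geq -(m-2)$; combined via Lemma \ref{Lem_2_4}(2) and Lemma \ref{Lem_2_3} this should give the exact value, the dominating integral being $\equiv 2^{-m+?}\pmod{\cdots}$ up to the explicit power dictated by $v_2((2^m-1)!)=n-m$ and $v_2((2^{m-1}-1)!)=\frac n2-m+\frac12$, which get bundled into the final formula $(6s+12)n+s+v_2((s+2)!)$.

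For the non-dominating terms I would run the same estimate as \eqref{eqn_other_terms}: bound $\triangle$ of the binomial product from below using Lemma \ref{Lem_2_4}(3), reduce to a factorial inequality, and invoke Lemma \ref{Lem_5_1} (the key input that $v_2((k-1)!(n-k)!)$ jumps by at least $1$ away from $k=2^{m-1}$) together with the elementary bound $u \geq v_2((u+2)(u+1))$ for $u\geq 1$ to absorb the $\binom{s+2}{i_*}$ factor. I expect this to go through essentially verbatim with the exponents shifted by the replacement $2+j\mapsto 1+j$, and the last line — $\left|d_n^{2s+3}T_n\right|_2 = 2^{(-6s-12+o(1))n}$ — follows from \eqref{d_n_est} since $v_2(d_n)=(m-1) = (\log_2 n + o(1))$ is negligible against $n$. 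The main obstacle I anticipate is purely bookkeeping: tracking the exact power of $2$ pulled out front (the analogue of $2^{(9s+18)n+4s+8}$) and the exact residue of the dominating Volkenborn integral, since an off-by-one in the exponent of $\binom{t+n}{n}$ propagates through every estimate; once those constants are correct the structure of the argument is identical to Lemma \ref{Lem_nonvanishing_A}.
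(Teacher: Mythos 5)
Your overall plan is the paper's plan --- the paper's proof of this lemma is literally ``repeat the proof of Lemma \ref{Lem_nonvanishing_A} with the simpler $B_n$'' --- but the one structural change you introduce is wrong, and it is not harmless bookkeeping. After the shift, $B_n(t+\tfrac14)=2^{(5s+10)n+2s+4}f(t)$ with $f(t)=(t+1)^{s+2}\cdots(t+n)^{s+2}g(t)$, $g(t)=\prod_{k=0}^{n}(4t+4k+1)^{-(s+2)}$ (your exponent ``$c=(3s+6)n+4s+8$ type'' comes from the wrong source: the power of $2$ arises from $(t+\tfrac14)_{n+1}^{s+2}=4^{-(n+1)(s+2)}\prod_{k=0}^{n}(4t+4k+1)^{s+2}$, not from a $2^{3n}$-per-Pochhammer normalization). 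More seriously, the exponent $2+j$ on the $n!\binom{t+n}{n}$-block in the Leibniz terms has nothing to do with how many copies of $(t)_{n+1}$ sit in the denominator: it comes from rewriting $\prod_{k}(t+k)^{s+2-i_k}$, with $i_1+\cdots+i_n=s-j$, as
\[
\Bigl(n!\tbinom{t+n}{n}\Bigr)^{2+j}\prod_{k=1}^{n}\Bigl((k-1)!(n-k)!\tbinom{t+k-1}{k-1}\tbinom{t+n}{n-k}\Bigr)^{i_k},
\]
and the multiplicity of the integer zeros of $B_n(t+\tfrac14)$ is still $s+2$, exactly as for $A_n$. With your exponent $1+j$ the displayed terms give each $(t+k)$ the exponent $s+1-i_k$, so they do not sum to $f^{(s)}(t)$; and if you ran the valuations with that structure you would land at $(6s+11)n+\cdots$ instead of $(6s+12)n+\cdots$.

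The off-by-one also destroys the key nonvanishing step, which is why it cannot be waved away as a constant to be fixed later. In the dominating term $(0,\ldots,0,i_{2^{m-1}}=s,0,\ldots,0)$ one has $j=0$, hence the factor $\binom{t+2^m-1}{2^m-1}^{2}$; the \emph{even} exponent is what allows the $f^p$ refinement in Lemma \ref{Lem_2_4}(3) to give $\triangle_m\geqslant -m+2$, so that Lemma \ref{Lem_2_3} yields the Volkenborn integral modulo $2^{-m+1}\mathbb{Z}_2$ and the exact valuation $-m$ can be read off from the $k=0$ term of the Riemann-type sum (using that $\binom{k+2^m-1}{2^m-1}$ is even for $1\leqslant k\leqslant 2^m-1$ and $g(0)$ is a unit). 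A single copy of $\binom{t+n}{n}$, as in your proposed $1+j$ block, only gives $\triangle_m\geqslant -m+1$, i.e.\ a congruence modulo $2^{-m}\mathbb{Z}_2$, which proves $v_2\geqslant -m$ but not the equality you need for $T_n\neq 0$. Once you restore the exponent $2+j$ (and the prefactor $2^{(5s+10)n+2s+4}$), the rest of your outline --- dominating index $i_{2^{m-1}}=s$, Lemma \ref{Lem_5_1} for the other indices, the bound $u\geqslant v_2((u+2)(u+1))$ to absorb $\binom{s+2}{i_*}$ --- matches the paper's argument verbatim.
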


\begin{proof}
	The proof is similar to that of Lemma \ref{Lem_nonvanishing_A}. 
	
	Since $v_2(d_n)=m-1$, it suffices to prove that 
	\[v_2(T_n) = (6s+12)n -(2s+3)m +3s + v_2((s+2)!) + 3. \]
	Note that 
	\[B_n\left(t+\frac{1}{4}\right) = 2^{(5s+10)n+2s+4}\cdot f(t), \]
	where 
	\[f(t) = (t+1)^{s+2}(t+2)^{s+2}\cdots(t+n)^{s+2}g(t)\] 
	and
	\[g(t) = \frac{1}{\prod_{k=0}^{n}(4t+4k+1)^{s+2}}.\]
	It suffices to show that
	\[v_2\left( \int_{\mathbb{Z}_2} f^{(s)}(t) \mathrm{d}t \right) = (s+2)n-(2s+3)m+s+v_2((s+1)!)-1.\]
	
	We define the index set $I$ by
	\[I = \left\{ (i_1,\ldots,i_n,j) \in \left(\mathbb{Z}_{\geqslant 0}\right)^{n+1}  ~\mid~ i_1+\cdots+i_n+j = s \right\}.\]
	Applying the Leibniz rule, we have
	\begin{align}
		f^{(s)}(t) 
		=& \sum_{(i_1,\ldots,i_n,j) \in I} s!\binom{s+2}{i_1} \cdots \binom{s+2}{i_n} \cdot (t+1)^{s+2-i_1} \cdots (t+n)^{s+2-i_n} \cdot \frac{g^{(j)}(t)}{j!} \notag\\
		=& \sum_{(i_1,\ldots,i_n,j) \in I} f_{(i_1,\ldots,i_n,j)}(t), \label{eqn_Sum_2}
	\end{align}
	where
	\begin{align*}
		f_{(i_1,\ldots,i_n,j)}(t) =& ~ s!\binom{s+2}{i_1} \cdots \binom{s+2}{i_n} \cdot \frac{g^{(j)}(t)}{j!} \cdot n!^{2+j} \binom{t+n}{n}^{2+j} \notag\\
		&\qquad\qquad \times \prod_{k=1}^{n} \left((k-1)!(n-k)! \binom{t+k-1}{k-1}\binom{t+n}{n-k}\right)^{i_k}. 
	\end{align*}
	We will show that the term corresponding to the index $(0,\ldots,0,i_{2^{m-1}}=s,0,\ldots,0)$ in \eqref{eqn_Sum_2} dominates the $2$-adic norm of the Volkenborn integral of $f^{(s)}(t)$. More precisely, we will prove that
	\begin{equation*}
		v_2\left(\int_{\mathbb{Z}_2} f_{(0,\ldots,0,i_{2^{m-1}}=s,0,\ldots,0)}(t) \mathrm{d}t\right) = (s+2)n -(2s+3)m + s +v_2((s+2)!) -1,
	\end{equation*}
	and 
	\begin{equation*}
		v_2\left(\int_{\mathbb{Z}_2} f_{(i_1,\ldots,i_n,j)}(t) \mathrm{d}t\right) \geqslant (s+2)n -(2s+3)m + s +v_2((s+2)!) 
	\end{equation*}
	for any $(i_1,\ldots,i_n,j) \in I$ with $i_{2^{m-1}} \neq s$.
	
	The rest of the proof is word-by-word the same as the proof of Lemma \ref{Lem_nonvanishing_A}.
\end{proof}

\bigskip

\begin{remark}
	Using similar (and simpler) arguments as in the proofs of Lemma \ref{Lem_nonvanishing_A} and Lemma \ref{Lem_nonvanishing_B}, we can prove that
	\begin{align*}
		\left| \Phi_n^{-s-2}d_n^{3s+5}S_n \right|_2 &\leqslant 2^{(-10s-20+o(1))n} \\
		\left| d_n^{2s+3}T_n \right|_2 &\leqslant 2^{(-6s-12+o(1))n}
	\end{align*}
for \emph{every} large positive integer $n$. However, it seems hard to prove $S_n \neq 0$ and $T_n \neq 0$ for a general positive integer $n$.
\end{remark}

\section{Proofs of the main results}

\begin{proof}[Proof of Theorem \ref{thm2}]
	Combining Lemma \ref{Lin}, Lemma \ref{Ari_i} and Lemma \ref{Ari_0}, we obtain that: $\Phi_n^{-s-2}d_n^{3s+5}S_n$ is a linear combination of $1$, $\zeta_2(j,1/4)$ ($s+3 \leqslant j \leqslant 3s+5$ and $j \equiv s+1+\delta \pmod{2}$) with integer coefficients.
	
	By \eqref{d_n_est}, Lemma \ref{Phi_est}, Lemma \ref{rhosigma_est} and Lemma \ref{Lem_nonvanishing_A}, when $n=2^m-1 \rightarrow \infty$ we have
	\[ \max_{0\leqslant i \leqslant 2s+4} \left| \Phi_n^{-s-2}d_n^{3s+5} \rho_{n,i} \right| \cdot \left| \Phi_n^{-s-2}d_n^{3s+5}S_n \right|_2 \leqslant \exp\left(\left( (4-6\log2)s+7-12\log2 +o(1)\right)n\right) \rightarrow 0, \]
	(because $4-6\log 2<0$ and $7-12\log2 <0$) and importantly
	\[ \Phi_n^{-s-2}d_n^{3s+5}S_n \neq 0. \]
		
	Applying Lemma \ref{Lem_2_1} to the sequence of linear forms $\{ \Phi_n^{-s-2}d_n^{3s+5}S_n \}_{n=2^m-1, m\geqslant 2}$, we deduce that the following set contains at least one irrational number:
	\[\left\{ \zeta_2\left(j,\frac{1}{4}\right) ~\mid~ j \in\mathbb{Z} \cap [s+3,3s+5], ~j \equiv s+1+\delta \pmod{2} \right\}.\]
	
	This is true for any nonnegative integer $s$ and any $\delta \in \{0,1\}$. The proof of Theorem \ref{thm2} is complete.
\end{proof}

\bigskip

\begin{proof}[Proof of Theorem \ref{thm3}]
	Combining Lemma \ref{Lin}, Lemma \ref{Ari_i} and Lemma \ref{Ari_0}, we obtain that: $d_n^{2s+3}T_n$ is a linear combination of $1$, $\zeta_2(j,1/4)$ ($s+3 \leqslant j \leqslant 2s+3$) with integer coefficients.
	
	By \eqref{d_n_est}, Lemma \ref{rhosigma_est} and Lemma \ref{Lem_nonvanishing_B}, when $n=2^m-1 \rightarrow \infty$ we have
	\[ \max_{0\leqslant i \leqslant s+2} \left| d_n^{2s+3} \sigma_{n,i} \right| \cdot \left| d_n^{2s+3}T_n \right|_2 \leqslant \exp\left(\left( (2-3\log2)s+3-6\log2 +o(1)\right)n\right) \rightarrow 0, \]
	and importantly
	\[ d_n^{2s+3}T_n \neq 0. \]
	
	Applying Lemma \ref{Lem_2_1} we deduce that the following set contains at least one irrational number:
	\[\left\{ \zeta_2\left(j,\frac{1}{4}\right) ~\mid~ j \in\mathbb{Z}\cap[s+3,2s+3] \right\}.\]
	
	This is true for any nonnegative integer $s$. The proof of Theorem \ref{thm3} is complete.
\end{proof}

\bigskip

\begin{proof}[Proof of Theorem \ref{thm1}]
	By Lemma \ref{Lem_2_7}, Theorem \ref{thm1} is the special case $\delta = 1$ of Theorem \ref{thm2}. 
	
	Taking $s=3$ we obtain that at least one of $\zeta_2(7),\zeta_2(9),\zeta_2(11),\zeta_2(13)$ is irrational.
\end{proof}

\bigskip

%
%

\bigskip

\vspace*{3mm}
\begin{flushright}
	\begin{minipage}{148mm}\sc\footnotesize
		{\it E--mail address}: {\tt lilaimath@gmail.com} \vspace*{3mm}
	\end{minipage}
\end{flushright}

\end{document}